\newtheorem{df}{Definition}[section]
\newtheorem{thm}[df]{Theorem}
\newtheorem{prop}[df]{Proposition}
\newtheorem{lemm}[df]{Lemma}
\newtheorem{cor}[df]{Corollary}
\newtheorem{rem}[df]{Remark}
\newcommand{\id}{\mathrm{id}}
\newcommand{\Q}{\mathbb{Q}}
\newcommand{\Z}{\mathbb{Z}}
\newcommand{\shuugou}[1]{\{ #1 \}}
\newcommand{\zettaiti}[1]{\lvert #1 \rvert}
\newcommand{\im}{\mathrm{im}}
\newcommand{\gyaku}[1]{ #1^{-1}}
\newcommand{\skein}[1]{\mathcal{S}( #1 )}
\newcommand{\kukakko}[1]{\langle #1 \rangle}
\newcommand{\defeq}{\stackrel{\mathrm{def.}}{=}}
\newcommand{\Aut}{\mathrm{Aut}}
\newcommand{\bch}{\mathrm{bch}}
\newcommand{\filtn}[1]{\{ #1 \}_{n \geq 0}}
\newcommand{\comp}[1]{\underleftarrow{\lim}_{#1 \rightarrow \infty}}
\newcommand{\arccosh }{\mathrm{arccosh}}
\begin{document}

\title[The Torelli group and skein modules]
{The Torelli group and  the Kauffman bracket skein module }
\author{Shunsuke Tsuji}
\date{}
\maketitle

\begin{abstract}
We introduce an embedding of the Torelli group of a compact connected oriented surface
with non-empty connected boundary into
the completed Kauffman bracket skein algebra of the surface,
which gives a new construction of the first Johnson homomorphism.
\end{abstract}

\section{Introduction}
In \cite{Turaev}, Turaev draw an analogy between the Goldman Lie algebra 
and some skein algebra.
On the other hand, in  
\cite{Kawazumi} \cite{KK} \cite{MT}, Kawazumi, Kuno, Massuyeau and Turaev found that 
the Goldman Lie algebra on a compact connected oriented surface  $\Sigma$ plays 
an important role in study of the mapping class group of 
the surface.
In our preceding paper \cite{TsujiCSAI}, we establish an explicit relationship between the Kauffman
bracket skein algebra and the mapping class group on the surface.
In particular, we obtain a formula for the action of the Dehn twist
along a simple closed curve $c$ on the completed Kauffman bracket skein module
with base point set  an finite set $J \in \partial \Sigma$,
in terms of the inverse function of the hyperbolic cosine function
\begin{equation*}
\exp(\sigma((L(c)))(\cdot)
=t_c (\cdot): \widehat{\skein{\Sigma,J}} \to \widehat{\skein{\Sigma,J}}
\end{equation*}
\cite{TsujiCSAI} Theorem 4.1 where
\begin{equation*}
L(c) \defeq \frac{-A+\gyaku{A}}{4 \log (-A)} (\arccosh (-\frac{c}{2}))^2
-(-A+\gyaku{A})\log (-A).
\end{equation*}
This formula is an analogy of the formula for the action of
the Dehn twist along a simple closed curve $c$ on the completion
of the group ring of the fundamental group of the surface
\cite{Kawazumi} \cite{KK} \cite{MT}.
The aim of this paper is to clarify the relationship between mapping class groups
and Kauffman bracket skein algebras based on the Dehn twists formula
stated above.

Let $\Sigma$ be a compact connected oriented surface with non-empty
connected boundary and genus $g>1$, and
$\widehat{\skein{\Sigma}}$ the completion of the Kauffman bracket skein 
algebra on $\Sigma$ in an $\ker \epsilon$-adic topology 
where $\ker \epsilon$ is an augmentation ideal.
We define a filtration $\filtn{F^n \widehat{\skein{\Sigma}}}$ in
\cite{Tsujipurebraid} satisfying
\begin{equation*}
[F^3 \widehat{\skein{\Sigma}}, F^n \widehat{\skein{\Sigma}}]
\subset F^{n+1} \widehat{\skein{\Sigma}}
\end{equation*}
where Lie bracket $[ \ \ , \ \ ]$ is defined by
\begin{equation*}
[x,y] \defeq \frac{1}{-A+\gyaku{A}} (xy-yx)
\end{equation*}
for $x$ and $y \in \widehat{\skein{\Sigma}}$.
We remark this filtration also induces the topology
induced by the augmentation ideal.
By this filtration, we can consider the logarithm of any element of
the Torelli group of $\Sigma$.

By the above condition, we can define the Baker-Campbell-Hausdorff series 
$\bch (\cdot, \cdot)$ on 
$F^3 \widehat{\skein{\Sigma}}$.
We define $C(c_1,c_2)$ by 
$\bch (L(c_1),L(c_2),-L(c_1),-L(c_2))$ for a pair 
$\shuugou{c_1,c_2}$ of simple closed
curves whose algebraic intersection is $0$.
In view of Putman's result \cite{Pu2008},
we study the three subsets $\mathcal{L}_{\mathrm{comm}}(\Sigma)$,
$\mathcal{L}_{\mathrm{bp}}(\Sigma)$ and
$\mathcal{L}_{\mathrm{sep}}(\Sigma)$
of  $F^3 \widehat{\skein{\Sigma}}$
where
we denote by $\mathcal{L}_{\mathrm{comm}}(\Sigma)$,
$\mathcal{L}_{\mathrm{bp}}(\Sigma)$ and
$\mathcal{L}_{\mathrm{sep}}(\Sigma)$ 
the set of all $C(c_1,c_2)$ for 
a pair $\shuugou{c_1,c_2}$ of two simple closed curves
whose intersection number is $0$,
the set of all $L(c_1)-L(c_2)$ 
for  a bounding pair $\shuugou{c_1,c_2}$
and the set of all $L(c)$
for a separating simple closed curve $c$, respectively.
The subsets $\mathcal{L}_{\mathrm{comm}}(\Sigma)$,
$\mathcal{L}_{\mathrm{bp}}(\Sigma)$ and
$\mathcal{L}_{\mathrm{sep}}(\Sigma)$
correspond to the subsets
$\mathcal{I}_{\mathrm{comm}} (\Sigma)$,
$\mathcal{I}_{\mathrm{bp}}(\Sigma)$
and $\mathcal{I}_{\mathrm{sep}}(\Sigma)$
of the Torelli group of $\mathcal{I} (\Sigma)$
where
we denote by $\mathcal{I}_{\mathrm{comm}}(\Sigma)$,
$\mathcal{I}_{\mathrm{bp}}(\Sigma)$ and
$\mathcal{I}_{\mathrm{sep}}(\Sigma)$ 
the set of all $C_{c_1,c_2}=t_{c_1}t_{c_2}{t_{c_1}}^{-1}
{t_{c_2}}^{-1}$ for 
a pair $\shuugou{c_1,c_2}$ of two simple closed curves
whose intersection number is $0$,
the set of all $t_{c_1}{t_{c_2}}^{-1}$ 
for  a bounding pair $\shuugou{c_1,c_2}$
and the set of all $t_c$
for a separating simple closed curve $c$, respectively.
In fact, in subsection \ref{subsection_injectivity_of_zeta},
we construct a surjective homomorphism
$\theta : (I \skein{\Sigma}, \bch) \to \mathcal{I}(\Sigma)$
defined by
$C(c_1,c_2) \in \mathcal{L}_{\mathrm{comm}}(\Sigma) \mapsto C_{c_1,c_2}
\in \mathrm{I}_{\mathrm{comm}}(\Sigma)$,
$L(c_1)-L(c_2) \in \mathcal{L}_{\mathrm{bp}}(\Sigma) \mapsto 
t_{c_1}{t_{c_2}}^{-1} \in \mathcal{I}_{\mathrm{bp}}(\Sigma)$
and $L(c) \in \mathcal{L}_{\mathrm{sep}}(\Sigma)
\mapsto t_{c} \in \mathcal{I}_{\mathrm{sep}}(\Sigma$
where we denote by $I\skein{\Sigma}$
the subgroup of $(F^3 \widehat{\skein{\Sigma}},\bch)$
generated by 
$\mathcal{L}_{\mathrm{comm}}(\Sigma)$,
$\mathcal{L}_{\mathrm{bp}}(\Sigma)$ and
$\mathcal{L}_{\mathrm{sep}}(\Sigma)$.
This homomorphism satisfies
\begin{equation*}
\exp  (\sigma (x)) (\cdot) =\theta (x) (\cdot) \in \Aut (\widehat{\skein{\Sigma,J}})
\end{equation*}
for any $x \in I \skein{\Sigma}$ and any finite subset $J$ of 
$\partial \Sigma$. Using the Putman's infinite presentation 
\cite{Pu2008}, we prove that $\theta$ is injective
in subsection \ref{subsection_well_defined_zeta}.

This embedding gives us a new way of
studying the mapping class group.
In fact, the embedding  gives 
a new construction of the first Johnson homomorphism
by
\begin{equation*}
\mathcal{I}(\Sigma) \to I \skein{\Sigma} \hookrightarrow
F^3 \widehat{\skein{\Sigma}} \twoheadrightarrow
F^3 \widehat{\skein{\Sigma}}/F^4 \widehat{\skein{\Sigma}}
\stackrel{\lambda^{-1}}{\simeq} 
\wedge^3 H_1 (\Sigma,\Q).
\end{equation*}
where $\zeta \defeq \theta^{-1}$.
Here $\wedge^n H_1 (\Sigma,\Q)$ the 
$n$-th exterior power of $H_1 (\Sigma,\Q)$.
These are analogies of  \cite{KK} 6.3.
In a subsequent paper,
we study the Johnson kernel $\mathcal{K} (\Sigma)$,
which is defined to be the kernel of the first Johnson homomorphism.
The isomorphism 
$\zeta_{|\mathcal{K} (\Sigma)} :\mathcal{K} (\Sigma) \to
F^4 \widehat{\skein{\Sigma}} \cap  I  \skein{\Sigma}$
induces the natural map
$\tau_{\zeta 2}: \mathcal{K} (\Sigma) \to
F^4 \widehat{\skein{\Sigma}}/F^5 \widehat{\skein{\Sigma}}
\simeq S^2 (S^2 (H_1 (\Sigma,\Q))) \oplus S^2 (H_1 (\Sigma, \Q)) \oplus
\Q$ where we denote by
$S^2 (V)$  the second symmetric tensor of 
a $\Q$ vector space $V$.
Then we have $\tau_{\zeta 2} =(\iota \circ \tau_2) \oplus 0 \oplus d_{\mathrm{Casson}}$.
Here $\iota: S^2 (\wedge^2 (H_1 (\Sigma, \Q)))
\to S^2 (S^2 (H_1(\Sigma,\Q)))$ is the $\Q$-linear map
defined by $(a \wedge b) \cdot (c \wedge d) \to (a \cdot c) \cdot(b\cdot d)-
(a \cdot d) \cdot (b \cdot c)$, $\tau_2 $ the second Johnson homomorpshim
and, $d_{\mathrm{Casson}}$  the core of the Casson invariant
defined in \cite{Morita_Casson_core}.

Furthermore, we expect that the embedding
 brings us some information about integral homology $3$-spheres
including the Casson invariant.
In a subsequent paper \cite{Tsujihom3},
we construct an invariant $z(M)$ for an integral homology $3$-sphere
$M$
which is an element of $\Q [[A+1]]$
using the embedding $\zeta$.
We remark the coefficient of $(A+1)$ in $z(M)$
is the Casson invariant. 

In section 2, we review some facts about
the skein algebra and the mapping class group of 
a compact connected surface.
In section 3, we construct 
an embedding from $\mathcal{I} (\Sigma) \to  F^3 \widehat{\skein{\Sigma}}$.

\begin{rem}
In section 3, we assume that
the genus of  a surface $\Sigma$ is larger than $2$
in order to use Putman's theorem.
Since $\mathcal{I} (\Sigma_{1,1})$ is generated by only
the Dehn twist along the simple slosed curve
which is parallel to the boundary of $\Sigma_{1,1,}$,
the map $\zeta$ is aloso isomorphism.

\end{rem}

\section*{Acknowledgment}
The author would like to thank his adviser, Nariya Kawazumi, for helpful discussion
and encouragement. 
This work was supported by JSPS KAKENHI Grant Number 15J05288 
and the Leading Graduate Course for Frontiers of Mathematical Sciences and Phsyics.

\tableofcontents
\section{Definition and Review}
\label{section_definition_review} 
In the section, we review some definitions and facts about the filtered 
Kauffman bracket skein algebra and module of a surface,
for details, see our papers \cite{TsujiCSAI} and \cite{Tsujipurebraid}.

Through this section, let $\Sigma$ be a compact connected surface
with non-empty boundary
and $I$ the closed interval $[0,1]$.

\subsection{Kauffman bracket skein algebras and modules}
Let $J$ be a finite subset of $\partial \Sigma$.
We denote by $\mathcal{T}(\Sigma,J)$ the set of unoriented framed tangles in
$\Sigma \times I$ with base point set $J$ 
and by $T(d)$ the tangle presented by a tangle diagram $d$,
 for details, see \cite{TsujiCSAI} section 2 and \cite{Tsujipurebraid} section 2.
Let $\skein{\Sigma,J}$ be the Kauffman bracket skein module of 
$\Sigma$ with base point set $J \times \shuugou{\frac{1}{2}}$, 
which is the quotient of $\Q [A.\gyaku{A}] \mathcal{T}(\Sigma,J)$ 
by the skein relation and the trivial knot relation \cite{TsujiCSAI} Definition 3.2.
 The skein relation is 
\begin{equation*}
T(d_1) -AT(d_\infty)-\gyaku{A} T(d_0)
\end{equation*}
where $d_1$, $d_\infty$ and $d_0$ are differ
only in an open disk shown in Figure 
\ref{fig_K2},
Figure \ref{fig_Kinfi} and Figure \ref{fig_K0}, respectively.
The trivial knot relation is
\begin{equation*}
T(d) -(-A^2-A^{-2})T(d')
\end{equation*}
where $d$ and $d'$ are differ only in a open disk shown 
a boundary of a disk and empty, respectively.
We denote that  we don't assume  "the boundary skein relation"
and "the value of  a contractible arc" in Muller \cite{Mu2012}.
We write simply $\skein{\Sigma} \defeq \skein{\Sigma, \emptyset}$.
The element of $\skein{\Sigma,J}$ represented by $T \in \mathcal{T}(\Sigma,J)$
is denoted by  $[T]$. 

\begin{figure}[htbp]
	\begin{tabular}{rrrr}
	\begin{minipage}{0.25\hsize}
		\centering
		\includegraphics[width=2cm]{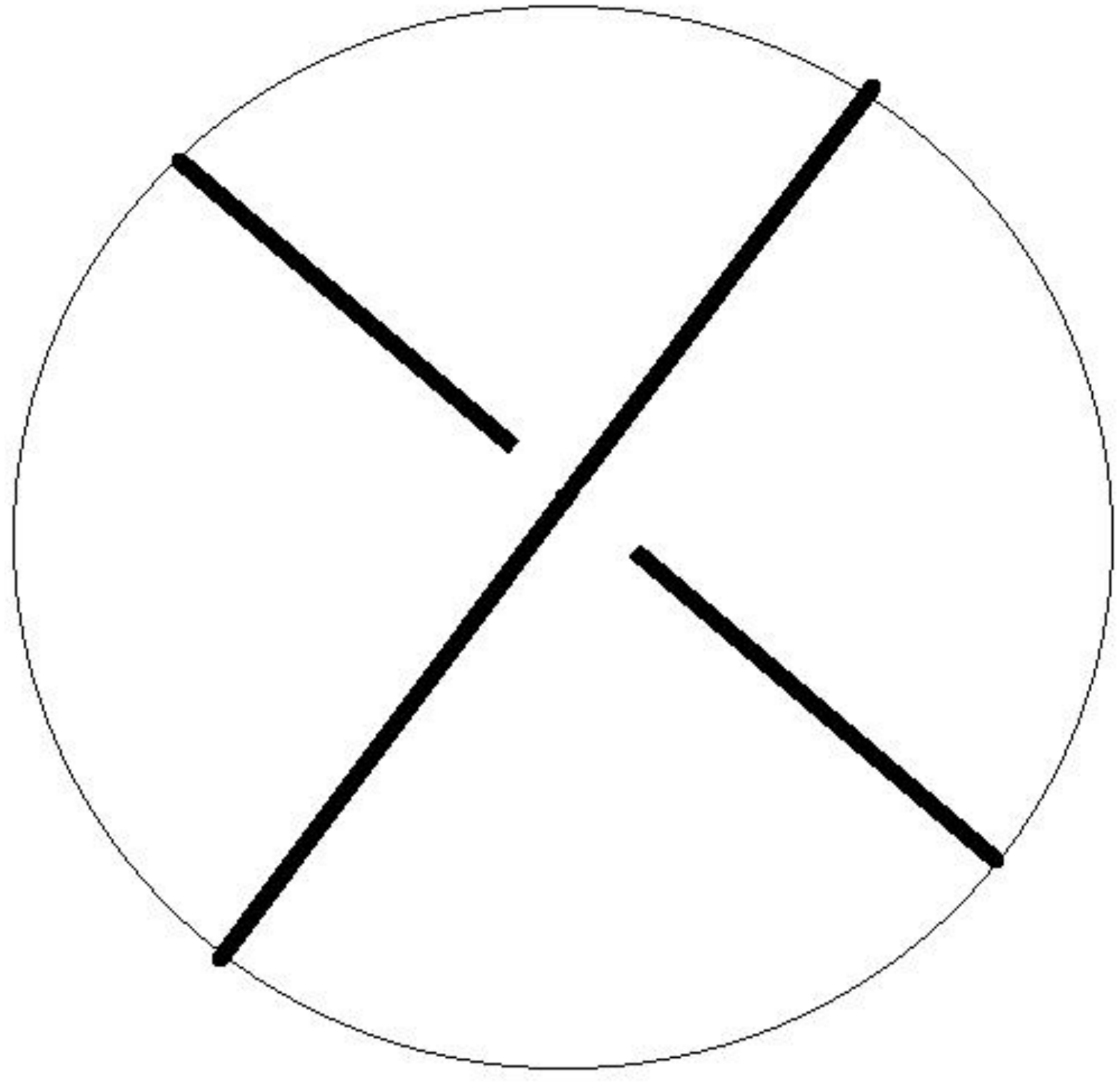}
		\caption{$d_1$}
		\label{fig_K2}
	\end{minipage}
		\begin{minipage}{0.25\hsize}
		\centering
		\includegraphics[width=2cm]{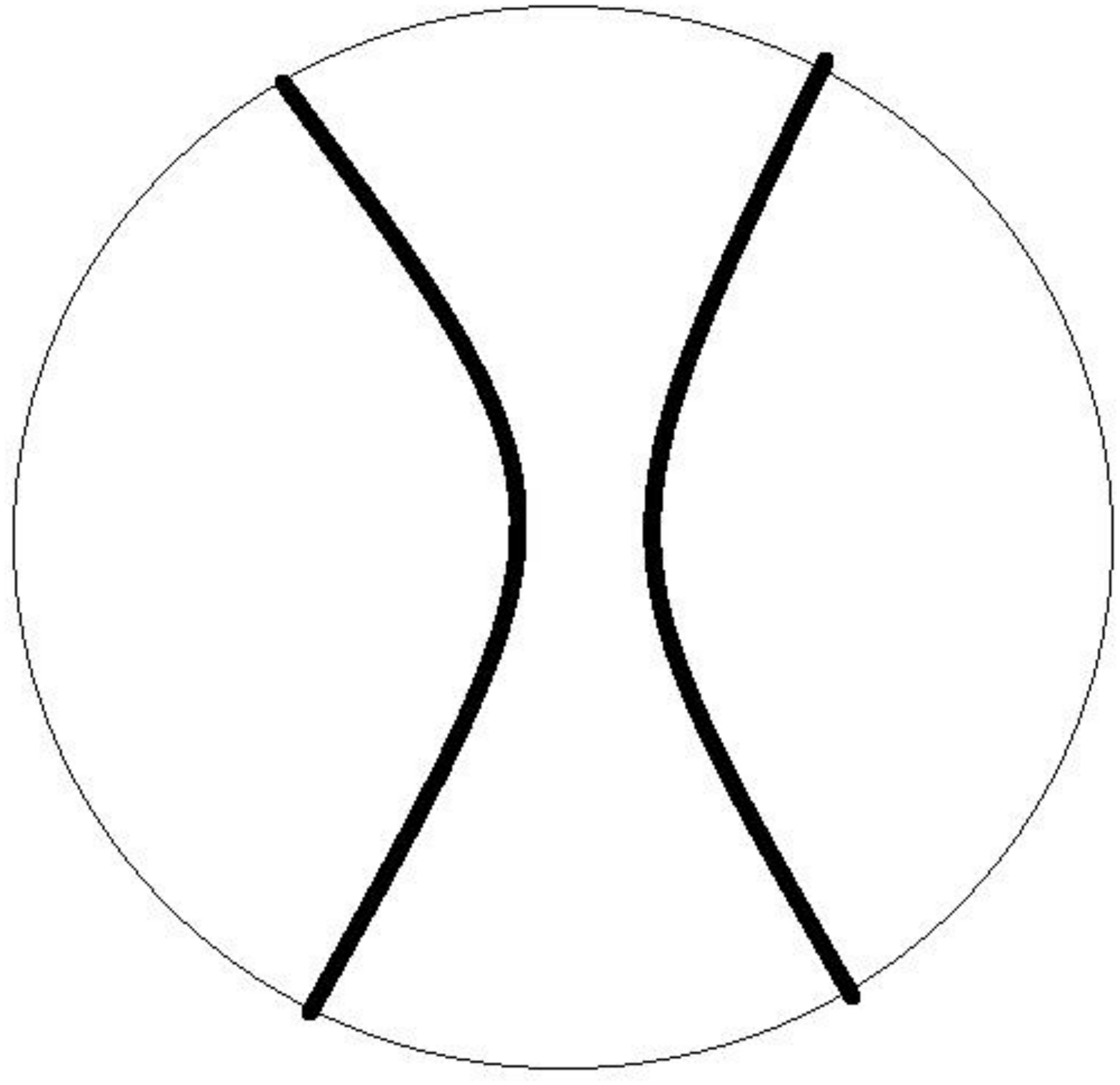}
		\caption{$d_\infty$}
		\label{fig_Kinfi}
	\end{minipage}
		\begin{minipage}{0.25\hsize}
		\centering
		\includegraphics[width=2cm]{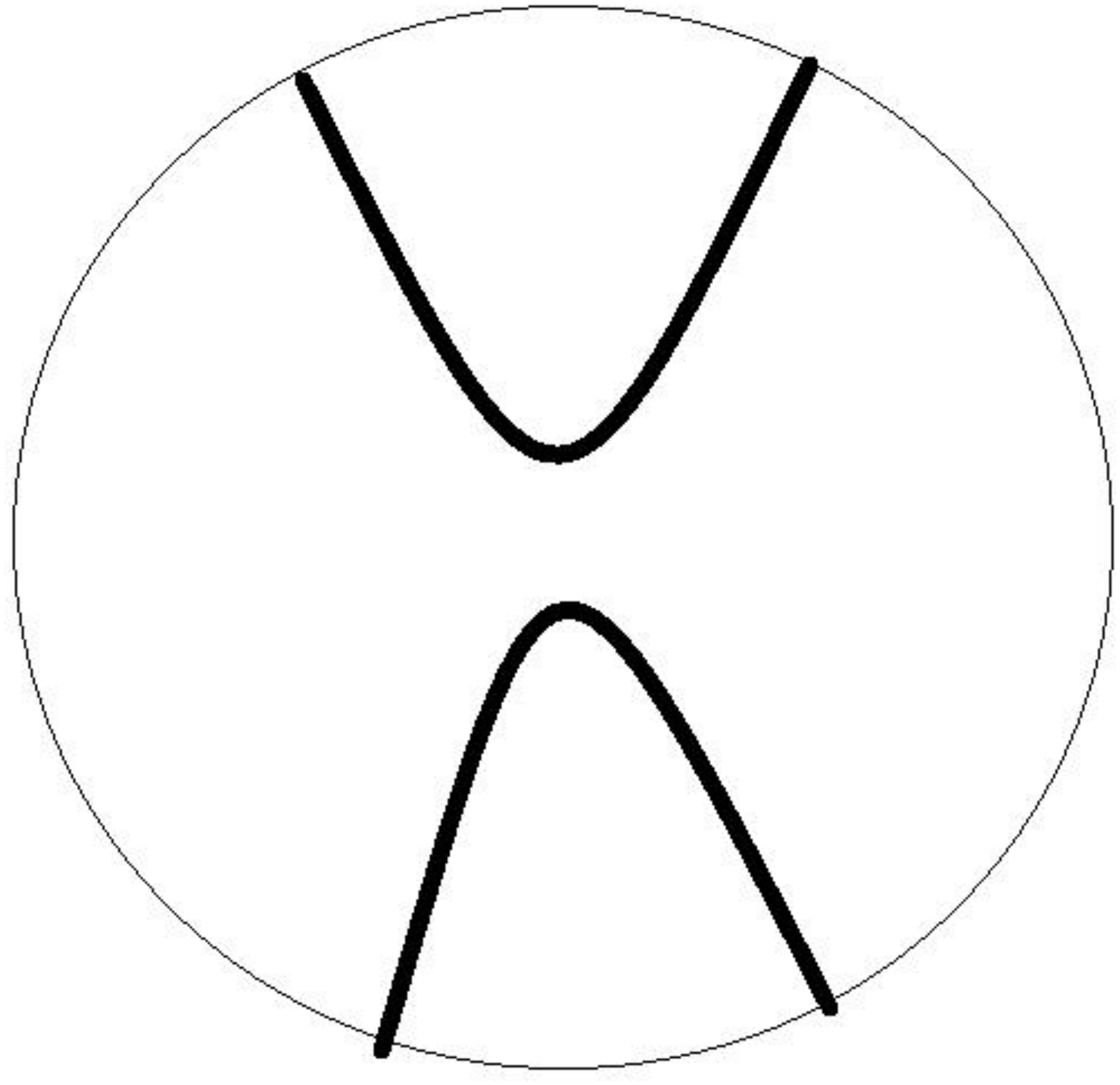}
		\caption{$d_0$}
		\label{fig_K0}
	\end{minipage}
	\end{tabular}
\end{figure}

There is a natural action of 
$\mathcal{M}(\Sigma)$ on $\skein{\Sigma,J}$ \cite{TsujiCSAI} section 2,
where $\mathcal{M}(\Sigma)$ is the mapping class group of $\Sigma$
fixing the boundary pointwise.
The product of $\skein{\Sigma}$ and
the right and left actions of $\skein{\Sigma}$ on $\skein{\Sigma,J}$
are defined by Figure \ref{fig_product_action}, for details, see \cite{TsujiCSAI} 3.1.
The Lie bracket $[,]:\skein{\Sigma} \times \skein{\Sigma} \to \skein{\Sigma}$ is defined by
$[x,y]\defeq \frac{1}{-A+\gyaku{A}} (xy-yx)$.
and the action $\sigma()(): \skein{\Sigma} \times \skein{\Sigma,J}
\to \skein{\Sigma,J}$ by
$\sigma(x)(z) \defeq \frac{1}{-A+\gyaku{A}}(xz-zx)$.
The action $\sigma$
makes $\skein{\Sigma,J} $ a $(\skein{\Sigma},[,])$-module with $\sigma$.
For details, see,  \cite{TsujiCSAI} 3.2.

\begin{figure}
\begin{picture}(300,83)
\put(0,-20){\includegraphics[width=90pt]{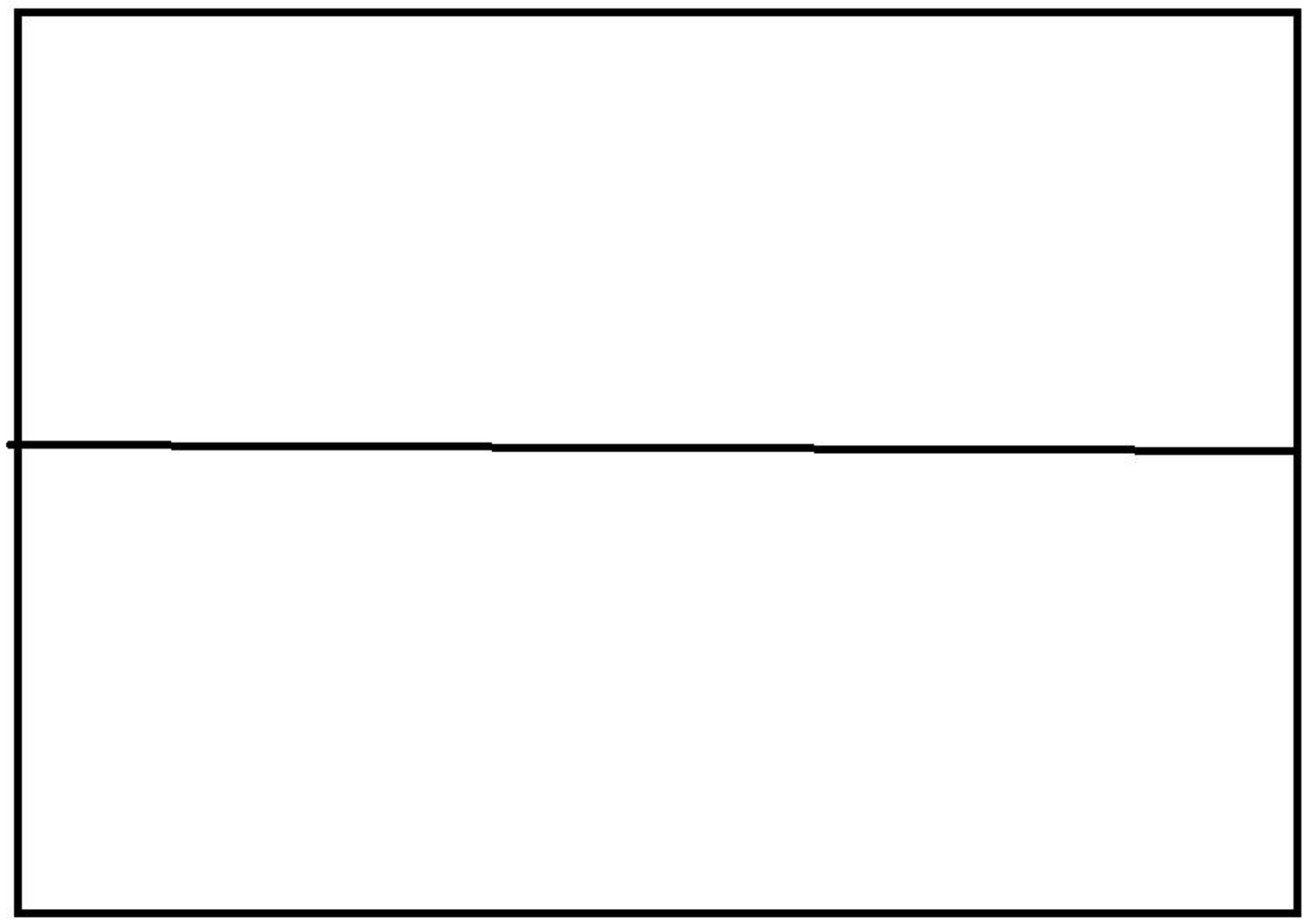}}
\put(100,-20){\includegraphics[width=90pt]{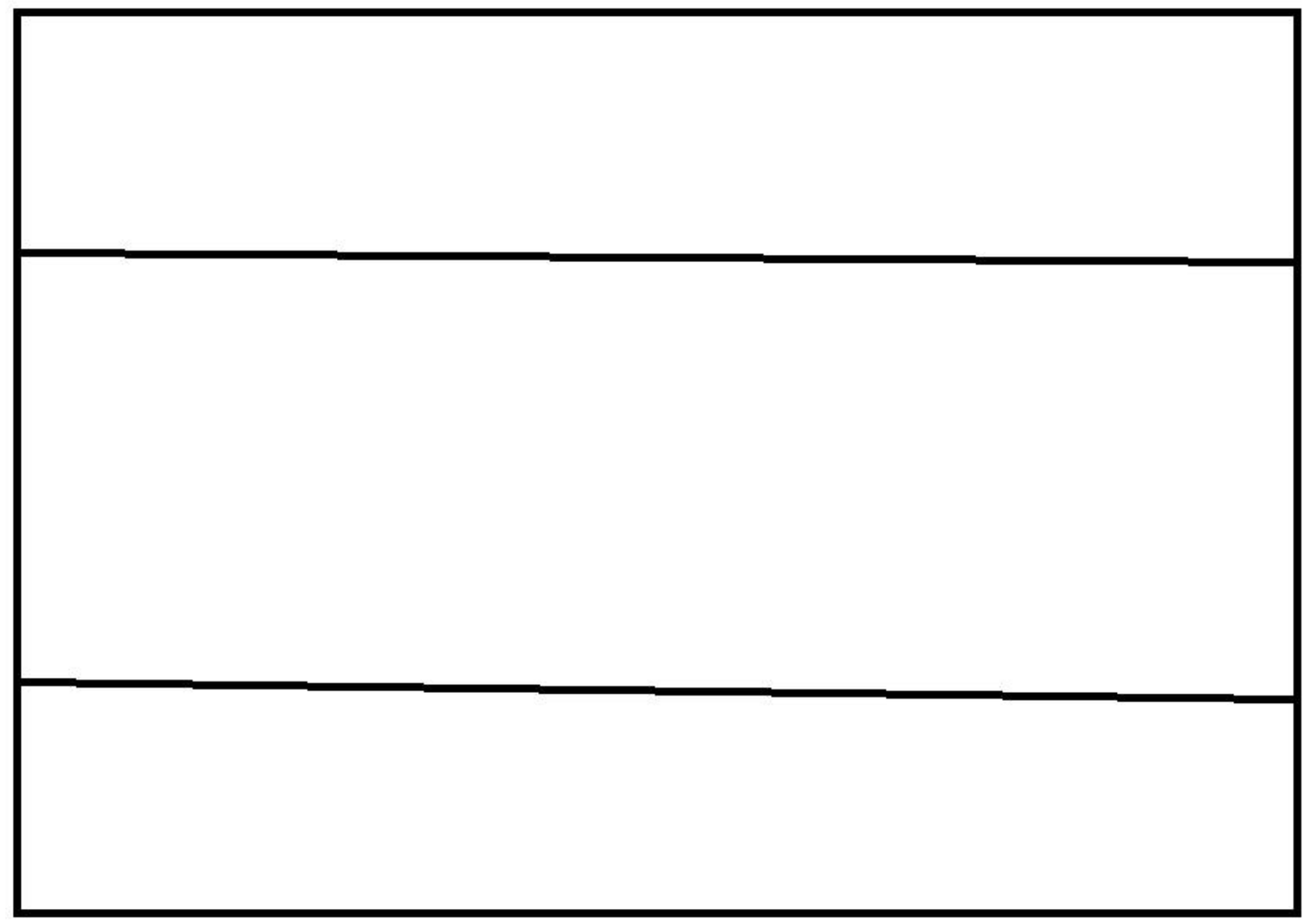}}
\put(200,-20){\includegraphics[width=90pt]{sikaku_3_PNG.pdf}}
\put(10,80){$xy \defeq$}
\put(40,50){$x$}
\put(40,20){$y$}
\put(10,2){$\mathrm{for} \ \  x,y \in \skein{\Sigma}$}
\put(0,10){$0$}
\put(0,55){$1$}
\put(0,32.5){$I$}
\put(40,65){$\Sigma$}

\put(110,80){$xz \defeq$}
\put(140,52){$x$}
\put(140,34){$z$}
\put(110,2){$\mathrm{for} \ \  x\in \skein{\Sigma}$}
\put(110,-10){$\ \ \mathrm{and} \ \ z \in \skein{\Sigma,J}$}
\put(100,10){$0$}
\put(100,55){$1$}
\put(100,32.5){$I$}
\put(140,65){$\Sigma$}

\put(210,80){$zx \defeq$}
\put(240,34){$z$}
\put(240,15){$x$}
\put(210,2){$\mathrm{for} \ \  x \in \skein{\Sigma}$}
\put(210,-10){$\ \ \mathrm{and} \ \ z \in \skein{\Sigma,J}$}
\put(200,10){$0$}
\put(200,55){$1$}
\put(200,32.5){$I$}
\put(240,65){$\Sigma$}

\end{picture}
\caption{The product and the actions}
\label{fig_product_action}
\end{figure}

\subsection{Filtration and Lie bracket}
The augmentation map $ \epsilon:\skein{\Sigma} \to \Q$ is defined by
$A+1 \mapsto 0$ and $[L]- (-2)^{\zettaiti{L}}\mapsto 0$ for $L \in \mathcal{T}(\Sigma)$,
where $\zettaiti{L}$ is the number of components of $L$.
The well-definedness of $\epsilon $ is proved in \cite{TsujiCSAI} Proposition
3.10. 

We denote $\pi \defeq \pi_1 (\Sigma, *)$ and
by $\Q \pi$ the group ring of $\pi$ over $\Q$,
where $*$ is a point of $\partial \Sigma$.
For $x \in \pi_1(\Sigma)$,
we define $\kukakko{x} \in (\ker \epsilon)/(\ker \epsilon)^2 $ 
by $[L_x]+2-3 w(L_x) (A-\gyaku{A})$ using $L_x \in \mathcal{T}(\Sigma)$
with $p_1 (L_x)$ the conjugation class of $x$, where
the writhe $w(L_x)$ is the sum of the signs of the crossing of a diagram 
presenting $L_x$.
The well-definedness of $\kukakko{\cdot}$ is proved
in \cite{Tsujipurebraid} Lemma 3.1.
We also denote by $\kukakko{\cdot} :\Q \pi
\ker \epsilon / ( \ker \epsilon)^2$
its $\Q$-linear extension.
By \cite{Tsujipurebraid} Proposition 3.3. we have
$\kukakko{xy}+\kukakko{x\gyaku{y}} =2 \kukakko{x}+2 \kukakko{y}$
for $x$ and $y \in \pi$.
Using this formula, we have
\begin{align}
&\kukakko{(a-1)(b-1)(c-1)} = \kukakko{-(b-1)(a-1)(c-1)}, \label{equation_calc_1}\\
&\kukakko{(a-1)(b-1)(c-1)(d-1)} =0, \label{equation_calc_2}\\
&\kukakko{(a-1)(b-1)(b-1)} = 0, \label{equation_calc_3}\\
&\kukakko{[a,b]c-c} = 2\kukakko{(a-1)(b-1)(c-1)}, \label{equation_calc_4}\\
&\kukakko{(a-1)(a-1)}=2\kukakko{a}, \label{equation_calc_5}
\end{align}
for $a$, $b$, $c$ and $d \in \pi$.
For details, see \cite{Tsujipurebraid} Lemma 3.4.
By these equations, the $\Q$-linear map
\begin{equation*}
\lambda :H \wedge H \wedge H
\to \ker \epsilon / (\ker \epsilon)^2, [a]\wedge [b] \wedge [c]
\mapsto \kukakko{(a-1)(b-1)(c-1)}
\end{equation*}
is well-defined where $H \defeq H_1 (\Sigma ,\Q) =\Q \otimes
\pi / [\pi , \pi]$.
We remark that $\lambda$ is injective Corollary 4.6.
Let $\varpi$ be the quotient map 
$\ker \epsilon \to \ker \epsilon /\im \lambda$. 
We define the filtration $\filtn{F^n \skein{\Sigma}}$
by
\begin{align*}
&F^0  \defeq \skein{\Sigma}, \\
&F^1 \skein{\Sigma} = F^2 \skein{\Sigma} \defeq \ker \epsilon, \\
&F^3 \skein{\Sigma} \defeq  \ker \varpi, \\
&F^n \skein{\Sigma} \defeq \ker \epsilon F^{n-2} \skein{\Sigma} \ \ (\mathrm{for} \ \ 4 \leq n).
\end{align*}
By \cite{Tsujipurebraid} Proposition 5.7. and Proposition 5.11,
we have
\begin{align*}
&F^m \skein{\Sigma} F^n \skein{\Sigma} \subset F^{n+m} \skein{\Sigma}, \\
&[F^m \skein{\Sigma}, F^n \skein{\Sigma}] \subset F^{n+m-2} \skein{\Sigma}, 
\end{align*}
for any $n$ and $m$.
Let $\rho$  be the $\Q$-linear map
\begin{align*}
\rho :H \bullet H \to F^2 \skein{\Sigma}/ F^3 \skein{\Sigma}, \ \ [a] \bullet [b] \mapsto
\kukakko{(a-1)(b-1)},
\end{align*}
where $H \bullet H$ is the symmetric tensor of $H$.
We remark $\rho$ is a $\Q$-module incjection by \cite{Tsujipurebraid} Theorem 4.1.

\subsection{Completion and Dehn twist}
We denote 
\begin{align*}
&\widehat{\skein{\Sigma}} \defeq \comp{i} \skein{\Sigma}/(\ker \epsilon)^i, \\
&\widehat{\skein{\Sigma,J}} \defeq \comp{i} \skein{\Sigma,J}/
(\ker \epsilon)^i \skein{\Sigma,J},
\end{align*}
for a finite subset $J \subset \partial \Sigma$.
By \cite{TsujiCSAI} Theorem 5.5, the natural homomorphisms
$\skein{\Sigma} \to \widehat{\skein{\Sigma}} $ and
$\skein{\Sigma,J} \to \widehat{\skein{\Sigma,J}}$
is injective.

\begin{prop}[\cite{Tsujipurebraid} Corollary 2.4.]
\label{prop_map_inj}
Let $J$ be a finite subset of $\partial \Sigma$.
If $\Sigma \neq \emptyset$ and $\partial_i \cap J \neq \emptyset$
for each $i \in \shuugou{1,2, \cdots,b}$, then
$\mathcal{M}(\Sigma) \to \Aut (\widehat{\skein{\Sigma,J}})$ is injective,
where $\partial_1, \cdots, \partial_b$ are the connected components of $
\partial \Sigma$.
\end{prop}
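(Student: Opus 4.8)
The plan is to reduce the statement to faithfulness of the $\mathcal{M}(\Sigma)$-action on the \emph{un}completed module $\skein{\Sigma,J}$, and then to detect a nontrivial mapping class through its action on simple arcs, combining the basis theorem for skein modules with the Alexander method. Since the natural map $\skein{\Sigma,J}\to\widehat{\skein{\Sigma,J}}$ is injective (as just recalled, from \cite{TsujiCSAI} Theorem 5.5) and is $\mathcal{M}(\Sigma)$-equivariant, it suffices to show $\mathcal{M}(\Sigma)\to\Aut(\skein{\Sigma,J})$ is injective. So I would let $\phi\in\mathcal{M}(\Sigma)$ act as the identity on $\skein{\Sigma,J}$ and aim to prove $\phi=\id$; note that $\phi$ fixes $\partial\Sigma$, hence $J$, pointwise, so its action on arcs with endpoints in $J$ is automatically rel endpoints.

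Next I would invoke the basis theorem: $\skein{\Sigma,J}$ is a free $\Q[A,\gyaku{A}]$-module whose basis consists of the isotopy classes of framed simple diagrams, i.e.\ disjoint unions of essential simple closed curves and essential simple arcs with endpoints in $J$, with the blackboard framing. For a simple arc $\alpha$ with $\partial\alpha\subset J$, the class $[\alpha]$ is such a basis element up to multiplication by a unit of the form $(-A^{3})^{w}$ ($w\in\Z$) coming from framing, and such a unit equals $1$ only if $w=0$. Since $\phi$ fixes $[\alpha]=[\phi\cdot\alpha]$, it follows that $\phi\cdot\alpha$ is isotopic to $\alpha$ rel endpoints for every simple arc $\alpha$ with $\partial\alpha\subset J$ (and likewise $\phi$ fixes the isotopy class of every simple closed curve).

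Then I would choose a collection $\alpha_1,\dots,\alpha_k$ of disjoint simple arcs with endpoints in $J$ that cut $\Sigma$ into a disk; this is exactly where the hypothesis $\partial_i\cap J\neq\emptyset$ enters. Picking $p_i\in J\cap\partial_i$, one first joins the $p_i$ by $b-1$ disjoint arcs to reduce to the connected-boundary case and then adds $2g$ arcs based at $p_1$, obtaining $k=2g+b-1$ arcs whose complement has Euler characteristic $1$ and is a disk $D$. By the simultaneous-isotopy form of the Alexander method: since $\phi$ preserves each $\alpha_i$ up to isotopy rel endpoints, after putting the $\alpha_i$ in minimal position $\phi$ is isotopic rel $\partial\Sigma$ to a homeomorphism fixing $\bigcup_i\alpha_i$ pointwise. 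Cutting along $\bigcup_i\alpha_i$, this homeomorphism induces one of $D$ fixing $\partial D$ pointwise, which is isotopic to $\id_D$ rel boundary by Alexander's lemma; reassembling gives $\phi\simeq\id$ in $\mathcal{M}(\Sigma)$.

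The main obstacle is steps two and three. First, one needs the basis theorem for $\skein{\Sigma,J}$ in a sufficiently precise form, with the framing convention pinned down, so that triviality of the skein action genuinely forces the isotopy classes of arcs to be fixed. Second, one needs the strengthening of the Alexander method allowing a single arc system whose components are fixed individually (and in minimal position) to be fixed simultaneously pointwise up to isotopy; this combinatorial-topology input is the technical heart. The degenerate small cases (disk, annulus, once-punctured torus, and similar) should be checked separately, but there $\mathcal{M}(\Sigma)$ is trivial or the above argument applies verbatim.
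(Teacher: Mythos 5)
There is nothing in this paper to compare you against: Proposition~\ref{prop_map_inj} is not proved here but imported verbatim from \cite{Tsujipurebraid}, Corollary 2.4 (a companion paper listed as ``in preparation''), so the only judgement possible is of your argument on its own terms. Your route is the natural and expected one, and its skeleton is sound: the reduction from $\widehat{\skein{\Sigma,J}}$ to $\skein{\Sigma,J}$ via the $\mathcal{M}(\Sigma)$-equivariant injection of \cite{TsujiCSAI} Theorem 5.5 is correct; a Przytycki-type freeness theorem for the relative skein module (legitimate here precisely because the paper imposes neither Muller's boundary skein relation nor a value for contractible arcs, so simple diagrams with arcs survive as basis elements) turns triviality of the action into preservation of isotopy classes of arcs rel endpoints; and the rel-endpoints Alexander method then kills the mapping class, with the hypothesis $\partial_i\cap J\neq\emptyset$ entering exactly where you say it does, namely to detect boundary Dehn twists, which act trivially on all simple closed curves but not on arcs rel endpoints.

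Two points need to be pinned down before this is a complete proof. First, the basis statement you invoke depends on the convention for $\mathcal{T}(\Sigma,J)$: in this paper a tangle with base point set $J$ has its endpoints filling $J\times\{\tfrac12\}$, so a single arc $\alpha$ (or a single closed curve) is generally \emph{not} an element of $\skein{\Sigma,J}$, and your sentence ``the class $[\alpha]$ is such a basis element'' does not literally parse unless $\partial\alpha=J$. The repair is standard but should be made explicit: basis elements are full simple diagrams using every point of $J$ exactly once, a homeomorphism fixing $\partial\Sigma$ pointwise sends blackboard-framed simple diagrams to blackboard-framed simple diagrams (so no framing unit appears), hence trivial action fixes the isotopy class of every such \emph{system}; one then runs the Alexander method on the components of one or several well-chosen filling systems, choosing components pairwise non-isotopic (or separating them by further systems) so that no permutation of components can occur. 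Second, the Alexander-method step must be quoted in its rel-boundary form: after putting the system in minimal position you need $\phi$ isotopic \emph{rel $\partial\Sigma$} to a homeomorphism fixing the union pointwise before cutting to a disk and applying Alexander's lemma; the usual statement only gives isotopy in $\mathrm{Homeo}^+(\Sigma)$, and the passage to the rel-boundary mapping class group is exactly the boundary-twist issue, so it deserves an explicit argument or a precise citation rather than the phrase ``by the simultaneous-isotopy form''. With these two points settled, your proof goes through and is presumably close in spirit to the unpublished proof the paper relies on.
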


We denote 
\begin{equation}
\label{equation_L_c}
L(c) \defeq \frac{-A+\gyaku{A}}{4 \log(-A)} (\arccosh (-\frac{c}{2}))^2
-(-A+\gyaku{A})\log (-A)
\end{equation}
where $c$ is also denoted by the element of $\skein{\Sigma}$ represented by 
the knot presented by a simple closed curve $c$.

\begin{thm}[\cite{TsujiCSAI} Theorem 4.1]
\label{thm_Dehn_twist}
Let $c$  be a simple
closed curve and $t_c$ the Dehn twist along $c$.
Then we have
\begin{equation*}
t_c(\cdot) = \exp (\sigma(L(c))) \defeq \sum_{i=0}^\infty \frac{1}{i!} (\sigma(L(c)))^i
\in \Aut (\widehat{\skein{\Sigma,J}}).
\end{equation*}
for any finite subset $J \subset \partial \Sigma$.
\end{thm}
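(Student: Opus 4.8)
The plan is to prove Theorem~\ref{thm_Dehn_twist} as the Kauffman bracket analogue of the generalized Dehn twist formula on the completed group ring $\widehat{\Q\pi}$ of Kawazumi--Kuno and Massuyeau--Turaev, with $L(c)$, built from $\arccosh$ of the skein class of $c$, playing the role of ``$\tfrac{1}{2}(\log c)^2$''. First I would settle well-definedness of the right hand side. The element $L(c)$ lies in $\widehat{\skein{\Sigma}}$: since $\epsilon(c)=-2$ we have $c+2\in\ker\epsilon$, and although $\arccosh(-c/2)$ is only a formal square root of an element of $\ker\epsilon$, its square $(\arccosh(-c/2))^2$ is a genuine power series in $c+2$ with leading term $-(c+2)$; moreover $-A+\gyaku A$ vanishes at $A=-1$ and so kills the pole of $1/\log(-A)$ because $\log(-A)=\log(1-(A+1))\in(A+1)\Q[[A+1]]$, indeed $\tfrac{-A+\gyaku A}{4\log(-A)}$ is a unit with constant term $\tfrac12$; and the trailing summand $-(-A+\gyaku A)\log(-A)$ is a scalar, hence annihilated by $\sigma$. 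Thus for this theorem one is exponentiating $\sigma$ of the power series $\tfrac{-A+\gyaku A}{4\log(-A)}(\arccosh(-c/2))^2\in\ker\epsilon$, whose leading term is $-\tfrac12(c+2)$. That $\exp$ of this operator is a well-defined element of $\Aut(\widehat{\skein{\Sigma,J}})$ is checked by applying it to an arbitrary framed tangle $z$ in minimal position with respect to $c$: expanding the commutators in $\sigma$ as Kauffman resolutions of a parallel band of $c$ with $z$ and inducting on $\zettaiti{z\cap c}$, one sees the series stabilizes $\ker\epsilon$-adically (this is the one genuinely local point; it may alternatively be extracted from the computation below).

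With both sides understood as continuous automorphisms of $\widehat{\skein{\Sigma,J}}$ compatible with the $(\skein{\Sigma},[,])$-module structure, and both restricting to the identity on classes disjoint from $c$ (for $\sigma(L(c))$ because the relevant commutator vanishes, for $t_c$ by definition), it suffices to verify the identity on a topological generating set adapted to $c$. I would fix an annular neighbourhood $N=N(c)$ and use that $\widehat{\skein{\Sigma,J}}$ is topologically generated, over $\widehat{\skein{\Sigma\setminus N}}$, by tangles meeting $N$ in a bundle of $k$ parallel boundary-to-boundary strands together with parallel copies of the core; on the copies of $c$ both operators are the identity, so everything reduces to the $k$-strand bundle. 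Here one passes to the skein module of the annulus $N$ decorated with the $k$-strand bundle, a free module over $\skein{N}=\Q[A^{\pm}][c]$, and identifies both $t_c$ (which inserts a full twist of the bundle as it traverses $N$) and $\exp(\sigma(L(c)))$ with explicit $\skein{N}$-linear endomorphisms. Decomposing the bundle by Jones--Wenzl idempotents into ``colours'' $\ell=0,1,\dots,k$ diagonalises both, and the theorem becomes a family of scalar identities equating, colour by colour, the ribbon/framing eigenvalue by which the full twist acts with $\exp$ of the scalar obtained by substituting into $\tfrac{-A+\gyaku A}{4\log(-A)}(\arccosh(-x/2))^2$ the eigenvalue by which $c$ acts on that colour.

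The heart of the proof is this family of scalar identities, and I expect it to be the main obstacle. The natural package is that the eigenvalue by which $c$ acts on the colour-$\ell$ strand is governed by the Chebyshev recursion attached to multiplication by $c$, so that $\arccosh(-c/2)$ normalised by $\log(-A)$ is precisely the quantity that diagonalises the whole tower of colours simultaneously --- which is why it appears in $L(c)$; exponentiating it and comparing with the ribbon/framing twist eigenvalue of the full-twist braid then reduces, colour by colour, to an elementary identity of power series in $\log(-A)$, the only nontrivial content being that the \emph{adjoint} action $\sigma(L(c))$, rather than multiplication by $L(c)$, realises that scalar on the $\ell$-coloured strand. Granting this, $t_c$ and $\exp(\sigma(L(c)))$ are continuous automorphisms of $\widehat{\skein{\Sigma,J}}$ agreeing on a topological generating set over $\widehat{\skein{\Sigma\setminus N}}$, on which both restrict to the identity, so they coincide, which is the theorem.
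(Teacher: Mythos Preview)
This theorem is not proved in the present paper. It appears in the review section \S\ref{section_definition_review} and is quoted verbatim from \cite{TsujiCSAI}, Theorem~4.1; no argument is given here. There is therefore no ``paper's own proof'' to compare your proposal against.

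For what it is worth, your outline is a plausible strategy and is in the spirit of how such Dehn twist formulas are typically established in skein theory: localising to an annular neighbourhood of $c$, decomposing a transverse $k$-strand bundle via Jones--Wenzl idempotents, and matching the full-twist (ribbon) eigenvalue on each colour against $\exp$ of the corresponding eigenvalue of $\sigma(L(c))$. You correctly identify the two places where real work is needed --- the $\ker\epsilon$-adic convergence of the exponential, and the colour-by-colour scalar identity relating the $\arccosh$ expression to the twist eigenvalue --- and you are honest that the first is only asserted, not carried out. Whether \cite{TsujiCSAI} proceeds exactly this way, or instead via a direct skein computation without Jones--Wenzl projectors, cannot be read off from the present paper; you should consult \cite{TsujiCSAI} directly to check your sketch against the actual proof.
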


\subsection{The Baker-Campbell-Hausdorff series}
\label{subsection_bch}
In this subsection, we will explain the Baker-Campbell-Hausdorff series.
We choose $S \subset \widehat{\skein{\Sigma}}$ such that,
for any $i \in \Z_{\geq 0}$, there exists $j_i\in \Z_{\geq 0}$
satisfying 
\begin{equation}
\label{equation_jouken_bch}
\sigma(a_{1})\circ
\sigma(a_{2}) \circ \cdots \sigma ( a_{j_i})(F^i
\skein{\Sigma})\subset F^{i+1} \skein{\Sigma}
\end{equation}
for $a_1, a_2, \cdots, a_{j_i} \in
S$.
In this paper,  the Baker-Campbell-Hausdorff series $\bch$ is defined by
\begin{align*}
&\bch (\epsilon_1 a_1, \epsilon_2 a_2, \cdots,\epsilon_m a_m) \\
&\defeq 
(-A+\gyaku{A}) \log (\exp(\frac{\epsilon_1 a_1}{-A+\gyaku{A}})
\exp(\frac{\epsilon_2 a_2}{-A+\gyaku{A}}) \cdots
\exp(\frac{\epsilon_m a_m}{-A+\gyaku{A}}))
\end{align*}
for $a_1, a_2, \cdots, a_{m} \in
S$ and $\epsilon_1, \epsilon_2, \cdots, \epsilon_m$.
We remark, as elements of the associated Lie algebra
$(\widehat{\skein{\Sigma}},[ \ \ , \ \ ])$,
it has a usual expression.
For example, 
\begin{equation*}
\bch(x,y) = x+y+\frac{1}{2}[x,y]+\frac{1}{12}([x,[x,y]]+[y,[y,x]])+ \cdots.
\end{equation*}
We denote
\begin{equation*}
\zettaiti{S} \defeq
\shuugou{\bch (\epsilon_1 a_1, \cdots, \epsilon_m 
a_m)|m \in \Z_{\geq 1}, a_1,  \cdots, a_{m} 
\in S, \epsilon_1, \cdots, \epsilon_m \shuugou{\pm1}}.
\end{equation*}
For $a_1, a_2, \cdots, a_{j_1}  \in \zettaiti{S}$, 
they satisfies the equation (\ref{equation_jouken_bch}).
The Baker-Campbell-Hausdorff series satisfies
\begin{align*}
& \bch(a,-a) =0, \\
& \bch(0,a)=\bch(a,0) =a, \\
& \bch(a,\bch(b,c))=\bch(\bch(a,b),c), \\
& \bch(a, b, -a) =\exp(\sigma(a))(b).
\end{align*}
Hence $(\zettaiti{S}, \bch) $ is a group whose identity is $0$.
Furthermore, if there exists $j' \in \Z_{>0}$ satisfying
\begin{equation}
\label{equation_bch_jouken_aut}
\sigma(a_{1})\circ
\sigma(a_{2}) \circ \cdots \sigma (a_{j'})(
\skein{\Sigma,J})\subset \ker{\epsilon} \skein{\Sigma,J}
\end{equation}
for $a_1, a_2, \cdots, a_{j'} \in
S$ and any finite set $J \subset \partial{\Sigma}$,
$\exp :(\zettaiti{S},\bch) \to \Aut (\widehat{\skein{\Sigma,J}})$
is a group homomorphism, i.e, $\exp(\sigma(\bch(a,b))) =\exp(\sigma(a)) \circ \exp
(\sigma(b))$ for $a,b \in \zettaiti{S}$.
For example, $F^3 \widehat{\skein{\Sigma}}$ satisfies the condition.
Furthermore, if the genus of $\Sigma$ is $0$,
$\widehat{\skein{\Sigma}}$ satisfies the condition and
there exists an embedding $\mathcal{M} (\Sigma) \hookrightarrow
(\widehat{\skein{\Sigma}}, \bch).$ For details, see \cite{Tsujipurebraid}.

\begin{lemm}[\cite{Tsujipurebraid} Corollary 5.16. Proposition 5.17.]
\label{lemm_bch_jouken}
Let $V_1$ and $V_2 \subset V_1$ be $\Q$-linear subspaces of $H$
satisfying $\mu (v,v') =0$ for any $v \in V_2 $ and $v' \in V_1$.
We denote
\begin{equation*}
S = \shuugou{x  \in \widehat{\skein{\Sigma}}|
\rho^{-1} (x \mathrm{ \ mod} F^3 \widehat{\skein{\Sigma}}) \in
V_1 \bullet V_2}.
\end{equation*}
Then $S$ satisfies the above conditions (\ref{equation_jouken_bch})
and
(\ref{equation_bch_jouken_aut}).
\end{lemm}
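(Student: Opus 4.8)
The content of the lemma lies in condition (\ref{equation_jouken_bch}); condition (\ref{equation_bch_jouken_aut}) holds with $j'=1$, because $\rho$ is valued in $F^2\skein{\Sigma}/F^3\skein{\Sigma}$, so every $x\in S$ already lies in $F^2\skein{\Sigma}\subseteq\ker\epsilon$, whence $\sigma(x)(\skein{\Sigma,J})\subseteq\ker\epsilon\,\skein{\Sigma,J}$ by compatibility of the $\skein{\Sigma}$-action with the augmentation ideal. For (\ref{equation_jouken_bch}) the plan is to pass to the associated graded. Given $x\in S$, write $\bar x:=\rho^{-1}(x\bmod F^3\skein{\Sigma})\in V_1\bullet V_2$ for its leading symbol. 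From $[F^2\skein{\Sigma},F^i\skein{\Sigma}]\subseteq F^i\skein{\Sigma}$ the operator $\sigma(x)$ preserves each filtration piece $F^i\skein{\Sigma}$, hence induces an endomorphism $\overline{\sigma(x)}$ of $\mathrm{gr}^i:=F^i\skein{\Sigma}/F^{i+1}\skein{\Sigma}$; since $[F^m\skein{\Sigma},F^i\skein{\Sigma}]\subseteq F^{i+1}\skein{\Sigma}$ for all $m\geq 3$, this endomorphism depends $\Q$-linearly on $\bar x$ alone, and $\sigma([x,y])=[\sigma(x),\sigma(y)]$ makes $\bar x\mapsto\overline{\sigma(x)}$ a homomorphism of Lie algebras $F^2\skein{\Sigma}/F^3\skein{\Sigma}\to\mathrm{End}(\mathrm{gr}^i)$. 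Condition (\ref{equation_jouken_bch}) for a fixed $i$ is precisely the vanishing of every $j_i$-fold composite $\overline{\sigma(a_1)}\circ\cdots\circ\overline{\sigma(a_{j_i})}$ with $a_1,\dots,a_{j_i}\in S$.

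The structural input I would use is the description of $\mathrm{gr}^{\ast}\skein{\Sigma}$ from \cite{Tsujipurebraid}: each $\mathrm{gr}^i$ is a finite-dimensional polynomial functor of $H=H_1(\Sigma,\Q)$ (of degree bounded in terms of $i$), $\rho$ identifies the degree-$2$ part $H\bullet H$ with a subspace of $F^2\skein{\Sigma}/F^3\skein{\Sigma}$, and via the classical isomorphism $H\bullet H\cong\mathfrak{sp}(H)$ the induced action of $H\bullet H$ on $\mathrm{gr}^i$ is the tautological $\mathfrak{sp}(H)$-action. Consequently all the operators $\overline{\sigma(a)}$, $a\in S$, are images of elements of the subspace $V_1\bullet V_2\subseteq H\bullet H\cong\mathfrak{sp}(H)$.

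It then remains a matter of symplectic linear algebra to see that $V_1\bullet V_2$ acts nilpotently. Because $V_2\subseteq V_1$ and $\mu(V_2,V_1)=0$, the subspace $V_2$ is $\mu$-isotropic and contained in $V_1^{\perp}$; a short computation with the bracket of $\mathfrak{sp}(H)\cong H\bullet H$ then gives
\begin{align*}
[V_1\bullet V_2,\ V_1\bullet V_2]\subseteq V_2\bullet V_2
\qquad\text{and}\qquad
[V_1\bullet V_2,\ V_2\bullet V_2]=0,
\end{align*}
so $V_1\bullet V_2$ is a $2$-step nilpotent Lie subalgebra of $\mathfrak{sp}(H)$; furthermore, acting on the defining representation $H$, each element of $V_1\bullet V_2$ maps $H$ into $V_1$, $V_1$ into $V_2$ and $V_2$ to $0$, hence strictly lowers the flag $0\subseteq V_2\subseteq V_1\subseteq H$. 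Such an element therefore strictly lowers the induced flag on any polynomial functor of $H$, so on $\mathrm{gr}^i$ every composite of $j_i$ operators $\overline{\sigma(a)}$ vanishes once $j_i$ exceeds the length of this induced flag (for instance $j_i=\dim_\Q\mathrm{gr}^i$, via Engel's theorem); this yields $\sigma(a_1)\circ\cdots\circ\sigma(a_{j_i})(F^i\skein{\Sigma})\subseteq F^{i+1}\skein{\Sigma}$ for all $a_1,\dots,a_{j_i}\in S$, which is (\ref{equation_jouken_bch}). As a byproduct one gets $\zettaiti{S}=S$, since every $\bch$-word in $S$ lies in $F^2\skein{\Sigma}$ with leading symbol in the Lie subalgebra of $H\bullet H$ generated by $V_1\bullet V_2$, which by the displayed identities is $V_1\bullet V_2$ itself.

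The one genuinely non-formal ingredient is the structural description of $\mathrm{gr}^{\ast}\skein{\Sigma}$ invoked above (finite-dimensional polynomial functors of $H$ with $F^2\skein{\Sigma}/F^3\skein{\Sigma}$ acting tautologically through $H\bullet H\cong\mathfrak{sp}(H)$, and $\rho$ realizing leading symbols as symplectic endomorphisms), which is where I would lean on \cite{Tsujipurebraid}. Granting it, the rest is elementary, and the hypothesis $\mu(V_2,V_1)=0$ (with $V_2\subseteq V_1$) is used precisely to produce the $V_1\bullet V_2$-invariant flag $0\subseteq V_2\subseteq V_1\subseteq H$ along which $V_1\bullet V_2$ acts nilpotently. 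I expect the flag/nilpotency bookkeeping (rather than any single hard step) to be the part requiring the most care.
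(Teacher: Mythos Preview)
The paper does not prove this lemma at all: it is stated as a quotation from \cite{Tsujipurebraid} (Corollary~5.16 and Proposition~5.17), with no argument given here. So there is no in-paper proof to compare your attempt to; what follows is an assessment of your sketch on its own merits.

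Your approach to condition~(\ref{equation_jouken_bch}) is sound. The symplectic linear algebra is correct: with the identification $H\bullet H\cong\mathfrak{sp}(H)$, an element $a\bullet b$ with $a\in V_1$, $b\in V_2$ sends $c\in H$ to $\mu(a,c)b+\mu(b,c)a$, and the hypothesis $\mu(V_2,V_1)=0$ indeed forces the flag $0\subset V_2\subset V_1\subset H$ to be strictly lowered; the bracket computations $[V_1\bullet V_2,V_1\bullet V_2]\subset V_2\bullet V_2$ and $[V_1\bullet V_2,V_2\bullet V_2]=0$ also check out. Granting the structural input on $\mathrm{gr}^\ast\skein{\Sigma}$ you invoke from \cite{Tsujipurebraid}, the nilpotency of the induced action on each $\mathrm{gr}^i$ follows as you say.

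Your treatment of condition~(\ref{equation_bch_jouken_aut}) has a gap. You assert that $j'=1$ works because $x\in S\subset F^2\skein{\Sigma}=\ker\epsilon$ already gives $\sigma(x)(\skein{\Sigma,J})\subset\ker\epsilon\,\skein{\Sigma,J}$. But $\sigma(x)(z)=\dfrac{xz-zx}{-A+A^{-1}}$, and $-A+A^{-1}=-A^{-1}(A-1)(A+1)\in\ker\epsilon$; dividing an element of $\ker\epsilon\,\skein{\Sigma,J}$ by an element of $\ker\epsilon$ need not land back in $\ker\epsilon\,\skein{\Sigma,J}$. Concretely, take a non-separating simple closed curve $a$ and an arc $z\in\skein{\Sigma,J}$ meeting $a$ transversally in one point. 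Then $a+2\in\ker\epsilon$ with $\rho^{-1}((a+2)\bmod F^3)=\tfrac12[a]\bullet[a]$, which lies in $V_1\bullet V_2$ for $V_1=V_2=\Q[a]$, so $a+2\in S$; yet a direct skein computation gives $\sigma(a+2)(z)=\sigma(a)(z)=r_2-r_1$, the difference of the two smoothings, and these two arcs need not coincide in $\skein{\Sigma,J}/\ker\epsilon\,\skein{\Sigma,J}$. So the blanket claim ``$x\in\ker\epsilon\Rightarrow\sigma(x)(\skein{\Sigma,J})\subset\ker\epsilon\,\skein{\Sigma,J}$'' is not available, and $j'=1$ does not follow from the reason you give. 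The fix is to run the same associated-graded nilpotency argument you used for~(\ref{equation_jouken_bch}), but now for the module filtration $(\ker\epsilon)^n\skein{\Sigma,J}$: one needs that $\sigma(F^2\skein{\Sigma})$ preserves each $(\ker\epsilon)^n\skein{\Sigma,J}$ and that the induced action on the graded pieces again factors through $V_1\bullet V_2\subset\mathfrak{sp}(H)$ acting on a polynomial functor of $H$. That module-side statement is exactly the sort of thing \cite{Tsujipurebraid} supplies, and is presumably why two results (Corollary~5.16 \emph{and} Proposition~5.17) are cited rather than one.
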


\section{Torelli groups and Johonson homomorphisms}
\label{section_Torelli}
Through this section, let $\Sigma$ be a compact surface
with a non-empty connected boundary and genus $g>1$.

The completed Kauffman bracket $\widehat{\skein{\Sigma}}$
has a filtration $\filtn{F^n \widehat{\skein{\Sigma}}}$
such that $\skein{\Sigma}/F^n \skein{\Sigma}\simeq \widehat{
\skein{\Sigma}}/F^n \widehat{\skein{\Sigma}}$ for
$n \in \Z_{\geq 0}$.

\subsection{The definition of $I \skein{\Sigma}$}
\label{subsection_definition_I}
The aim of this subsection is to define $I \skein{\Sigma}$.

\begin{lemm}
Let $\shuugou{c_1, c_2}$ be a pair of curves whose algebraic intersection number
is $0$. 
Then the set $\shuugou{L(c_1),L(c_2)}$ satisfies
the conditions  (\ref{equation_jouken_bch}) 
and (\ref{equation_bch_jouken_aut}).
\end{lemm}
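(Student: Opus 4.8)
The plan is to reduce the statement to Lemma~\ref{lemm_bch_jouken} by packaging $L(c_1)$ and $L(c_2)$ into the set $S$ that lemma produces from a suitable totally isotropic subspace of $H$. For a simple closed curve $c$ write $[c]\in H=H_1(\Sigma;\Q)$ for its homology class and set $V_1=V_2:=\Q[c_1]+\Q[c_2]\subset H$. The intersection form $\mu$ on $H$ is skew-symmetric, so $\mu([c_1],[c_1])=\mu([c_2],[c_2])=0$, and $\mu([c_1],[c_2])=0$ by the hypothesis that the algebraic intersection number of $\{c_1,c_2\}$ vanishes; hence $\mu(v,v')=0$ for all $v\in V_2$ and $v'\in V_1$. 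By Lemma~\ref{lemm_bch_jouken} the set
\begin{equation*}
S:=\shuugou{x\in\widehat{\skein{\Sigma}}\ |\ \rho^{-1}(x\bmod F^3\widehat{\skein{\Sigma}})\in V_1\bullet V_2}
\end{equation*}
satisfies the conditions (\ref{equation_jouken_bch}) and (\ref{equation_bch_jouken_aut}), and these conditions are clearly inherited by any subset of $S$ (with the same bounds $j_i$, $j'$). So it suffices to prove $L(c_1),L(c_2)\in S$, i.e. that $\rho^{-1}(L(c_i)\bmod F^3\widehat{\skein{\Sigma}})\in V_1\bullet V_2$ for $i=1,2$.

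The core of the argument is the assertion that, for every simple closed curve $c$,
\begin{equation*}
L(c)\equiv -\tfrac{1}{4}\,\rho([c]\bullet[c])\pmod{F^3\widehat{\skein{\Sigma}}}.
\end{equation*}
Granting this, $\rho^{-1}(L(c)\bmod F^3\widehat{\skein{\Sigma}})=-\tfrac14[c]\bullet[c]$, which lies in $V_1\bullet V_2$ for $c=c_1,c_2$ because $[c_i]\in V_1=V_2$, and the lemma follows. To prove the congruence, note first that $\epsilon(c)=-2$, so $s:=-\tfrac{c}{2}-1=-\tfrac12(c+2)\in\ker\epsilon$; substituting into the Taylor expansion $(\arccosh(1+s))^2=2s+\sum_{k\geq2}a_k s^k$ shows $(\arccosh(-\tfrac{c}{2}))^2\equiv-(c+2)\pmod{(\ker\epsilon)^2}$. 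Writing $A=-e^{h}$ we have $\tfrac{-A+\gyaku{A}}{4\log(-A)}=\tfrac{\sinh h}{2h}$, a unit whose constant term is $\tfrac12$ and whose remaining part lies in $(A+1)\widehat{\skein{\Sigma}}\subset\ker\epsilon$, while $(-A+\gyaku{A})\log(-A)=2h\sinh h\in(A+1)^2\widehat{\skein{\Sigma}}\subset(\ker\epsilon)^2$. Since $(\ker\epsilon)^2\subset F^3\widehat{\skein{\Sigma}}$ (indeed $F^4\widehat{\skein{\Sigma}}=(\ker\epsilon)^2\subset F^3\widehat{\skein{\Sigma}}$), multiplying everything out and discarding the resulting terms lying in $(\ker\epsilon)^2$ — the tail of the $\arccosh$-expansion, the products of elements of $(A+1)\widehat{\skein{\Sigma}}$ with $c+2$, and the last summand of $L(c)$ — leaves $L(c)\equiv-\tfrac12(c+2)\pmod{F^3\widehat{\skein{\Sigma}}}$.

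It then remains to identify $c+2$ modulo $F^3\widehat{\skein{\Sigma}}$. A simple closed curve on $\Sigma$ has an embedded, crossingless diagram, hence one of writhe $0$, whose underlying loop represents the free homotopy class of $c$; so if $\gamma\in\pi$ lies in that conjugacy class, the definition of $\kukakko{\cdot}$ yields $\kukakko{\gamma}=[L_\gamma]+2=c+2$ in $\ker\epsilon/(\ker\epsilon)^2$. By (\ref{equation_calc_5}) and the definition of $\rho$, $\kukakko{\gamma}=\tfrac12\kukakko{(\gamma-1)(\gamma-1)}=\tfrac12\rho([c]\bullet[c])$ in $F^2\widehat{\skein{\Sigma}}/F^3\widehat{\skein{\Sigma}}$; combined with the previous paragraph this gives $L(c)\equiv-\tfrac12(c+2)\equiv-\tfrac14\rho([c]\bullet[c])\pmod{F^3\widehat{\skein{\Sigma}}}$, as claimed.

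The main obstacle is the second paragraph: the filtration $\{F^n\widehat{\skein{\Sigma}}\}$ mixes an $(A+1)$-adic direction with a topological one, and one must check carefully that every error term coming from the $\arccosh$- and $\log$-series, and from the unit $\tfrac{-A+\gyaku{A}}{4\log(-A)}$, really falls into $F^3\widehat{\skein{\Sigma}}$; this works precisely because $(\ker\epsilon)^2\subset F^3\widehat{\skein{\Sigma}}$, so only the part of $L(c)$ that is linear in both directions survives. Once $L(c)\equiv-\tfrac12(c+2)$ is established, the reduction to an isotropic subspace and the matching with $\rho$ via a writhe-zero diagram are routine.
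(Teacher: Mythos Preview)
Your proof is correct and follows the same approach as the paper: both reduce to Lemma~\ref{lemm_bch_jouken} via the congruence $L(c)\equiv -\tfrac12(c+2)\equiv -\tfrac14\,\rho([c]\bullet[c])\pmod{F^3\widehat{\skein{\Sigma}}}$, choosing $V_1=V_2$ to be the span of $[c_1],[c_2]$. The paper simply asserts this congruence as a remark, whereas you supply the details (the $\arccosh$ expansion and the identification of $c+2$ with $\tfrac12\rho([c]\bullet[c])$ via a writhe-zero diagram and equation~(\ref{equation_calc_5})); otherwise the arguments are identical.
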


\begin{proof}
Let $\gamma_1$ and $\gamma_2$ be elements of $H_1$ such that 
$\pm [c_1] =\gamma_1$ and $\pm [c_2] =\gamma_2$.
We remark that
\begin{equation*}
L(c_i) =-\frac{1}{2}(c_i+2) =-\frac{1}{4}\rho(\gamma_i \bullet \gamma_i) \mod F^3 
\widehat{\skein{\Sigma}}
\end{equation*}
for $i =1,2$.
Since $\mu(\gamma_1,\gamma_2)=0$ and Lemma \ref{lemm_bch_jouken},
we have the claim of the Lemma.
This finishes the proof.
\end{proof} 

By this lemma, we can define $C(c_1,c_2) =\bch (L(c_1),L(c_2),-L(c_1),-L(c_2))$
for a pair $\shuugou{c_1, c_2}$ of curves whose algebraic intersection number
is $0$. We denote  by $\mathcal{L}_{\mathrm{comm}}(\Sigma)$
the set of all $C(c_1,c_2) $. Since $[L(c_1),L(c_2)] \in F^3 \widehat{\skein{\Sigma}}$ for
a pair $\shuugou{c_1,c_2}$ of curves whose algebraic intersection number is $0$,
we have $\mathcal{L}_{\mathrm{comm}} (\Sigma) \subset F^3
\widehat{\skein{\Sigma}}$.

\begin{lemm}
\label{lemm_bounding_pair}
Let $\shuugou{c_1,c_2}$ be a pair of non-isotopic disjoint homologous curves
which is presented by $(r[a_1,b_1]\cdots[a_m,b_m])_\square$ and $(r)_\square$, 
respectively,
for $r, a_1, b_1, \cdots, a_m, b_m \in \pi_1(\Sigma)$.
Then we have $L(c_1)-L(c_2) =-\sum_{i=1}^m \lambda ([r]\wedge
[a_i]\wedge [b_i]) \mod F^4 \widehat{
\skein{\Sigma} } = ( \ker \epsilon)^2$.
\end{lemm}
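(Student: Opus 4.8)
The plan is to express $L(c_1) - L(c_2)$ modulo $F^4 \widehat{\skein{\Sigma}} = (\ker\epsilon)^2$ purely in terms of the bracket $\kukakko{\cdot}$ applied to elements of $\Q\pi$, and then identify the resulting element of $\ker\epsilon/(\ker\epsilon)^2 = F^3\widehat{\skein{\Sigma}}/F^4\widehat{\skein{\Sigma}}$ via the isomorphism $\lambda$. First I would recall from the definition \eqref{equation_L_c} that, modulo $F^3 \widehat{\skein{\Sigma}}$, one has $L(c_i) \equiv -\tfrac12(c_i + 2)$ (the quadratic term in $\arccosh$ contributes only in degree $\geq 2$ of $\ker\epsilon$, i.e.\ lands in $F^3$); hence modulo $F^4 = (\ker\epsilon)^2$ the difference $L(c_1) - L(c_2)$ is congruent to $-\tfrac12\big((c_1+2) - (c_2+2)\big) = -\tfrac12(c_1 - c_2)$, where $c_i \in \skein{\Sigma}$ is the skein class of the simple closed curve. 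So the task reduces to computing $c_1 - c_2 \bmod (\ker\epsilon)^2$ in terms of the presentations $c_1 = (r[a_1,b_1]\cdots[a_m,b_m])_\square$ and $c_2 = (r)_\square$.

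Next I would use the formula relating the skein class of a knot coming from a loop $x \in \pi$ to the bracket $\kukakko{x}$: by the definition of $\kukakko{\cdot}$ in the excerpt, $[L_x] = \kukakko{x} - 2 + 3w(L_x)(A - \gyaku A)$ in $\ker\epsilon/(\ker\epsilon)^2$, and since $A - \gyaku A \in \ker\epsilon$ the writhe term is already killed modulo $(\ker\epsilon)^2$ up to a factor in $\ker\epsilon$; more precisely $w(L_x)(A-\gyaku A) \in \ker\epsilon$ but $(c_1 - c_2)$ difference will cancel writhe discrepancies once curves are chosen compatibly, or one absorbs it. The cleaner route: $c_i + 2 \equiv \kukakko{x_i} \bmod (\ker\epsilon)^2$ where $x_1 = r[a_1,b_1]\cdots[a_m,b_m]$ and $x_2 = r$ (this is essentially the content of \cite{Tsujipurebraid} Lemma 3.1 packaging $\kukakko{\cdot}$), so $L(c_1) - L(c_2) \equiv -\tfrac12(\kukakko{x_1} - \kukakko{x_2})$. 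Then I would expand $\kukakko{r[a_1,b_1]\cdots[a_m,b_m]} - \kukakko{r}$ using the fundamental identity $\kukakko{xy} + \kukakko{x\gyaku y} = 2\kukakko x + 2\kukakko y$ together with the derived relations \eqref{equation_calc_1}--\eqref{equation_calc_5}, in particular \eqref{equation_calc_4}: $\kukakko{[a,b]c - c} = 2\kukakko{(a-1)(b-1)(c-1)}$, applied iteratively (peeling off one commutator $[a_i,b_i]$ at a time, with $c$ playing the role of the remaining product $r[a_1,b_1]\cdots[a_{i-1},b_{i-1}]$), and the vanishing \eqref{equation_calc_2} of fourth-order brackets to discard cross terms. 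This should telescope to $\kukakko{x_1} - \kukakko{x_2} = 2\sum_{i=1}^m \kukakko{(r-1)(a_i-1)(b_i-1)}$ up to the symmetry relations — here I'd also need \eqref{equation_calc_1} to normalize the order of factors so that $r$ appears first, matching the convention of $\lambda$.

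Finally I would convert $\kukakko{(r-1)(a_i-1)(b_i-1)}$ into $\lambda([r]\wedge[a_i]\wedge[b_i])$ by the very definition of $\lambda$, obtaining $L(c_1) - L(c_2) \equiv -\tfrac12 \cdot 2 \sum_{i=1}^m \lambda([r]\wedge[a_i]\wedge[b_i]) = -\sum_{i=1}^m \lambda([r]\wedge[a_i]\wedge[b_i])$, which is the claim. The main obstacle I anticipate is the bookkeeping in the iterated application of \eqref{equation_calc_4}: when peeling off $[a_i,b_i]$ one generates terms of the form $\kukakko{(a_i-1)(b_i-1)(w-1)}$ where $w$ is a \emph{product} of $r$ and earlier commutators, and one must check, using multilinearity of $\kukakko{(-)(-)(-) - \text{stuff}}$ modulo the degree-$4$ vanishing \eqref{equation_calc_2} and the reordering \eqref{equation_calc_1}, that all contributions except the linear-in-each-factor pieces $\kukakko{(r-1)(a_i-1)(b_i-1)}$ cancel — in particular terms like $\kukakko{(a_j-1)(b_j-1)(\cdots)}$ for $j \neq i$ arising from commutators inside $w$ vanish by \eqref{equation_calc_2} since they involve four independent $\pi$-factors. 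Care is also needed that the identification $c_i + 2 \equiv \kukakko{x_i}$ is legitimate even though the two simple closed curves have possibly different writhes in their chosen diagrams; this is handled because $w \cdot (A - \gyaku A)$ lies in $\ker\epsilon$ and the relevant expressions are being read modulo $(\ker\epsilon)^2$ only after the $-\tfrac12$-scaled difference is taken, so I would state this reduction carefully at the outset.
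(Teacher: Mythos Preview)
Your proposal is correct and follows essentially the same route as the paper: reduce $L(c_1)-L(c_2)$ to $-\tfrac12(c_1-c_2)\equiv -\tfrac12\kukakko{x_1-x_2}$ modulo $(\ker\epsilon)^2$, telescope over the commutators, and apply \eqref{equation_calc_4} to each summand. The only simplification you are missing is that after \eqref{equation_calc_4} produces $\kukakko{(r\prod_{i<j}[a_i,b_i]-1)(a_j-1)(b_j-1)}$, you need not invoke \eqref{equation_calc_2} to kill cross terms---since $\lambda$ depends only on homology classes and $[r\prod_{i<j}[a_i,b_i]]=[r]$ in $H$, this is immediately $\lambda([r]\wedge[a_j]\wedge[b_j])$.
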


\begin{proof}
By the equation (\ref{equation_calc_4}), we have
\begin{align*}
&L(c_1)-L(c_2)=-\frac{1}{2}(c_1-c_2) =-\frac{1}{2}\kukakko{r \prod_{i=1}^m [a_i,b_i]-r} \\
&=-\frac{1}{2}\sum_{j=1}^m \kukakko{r\prod_{i=1}^j[a_i,b_i]-r\prod_{i=1}^{j-1}[a_i,b_i]} 
=-\sum_{j=1}^m\kukakko{(r\prod_{i=1}^{j-1}[a_i,b_i]-1)(a_j-1)(b_j-1)} \\
&=-\sum_{i=1}^m \lambda ([r]\wedge[a_i]\wedge [b_i]).  \mod (\ker \epsilon)^2\\
\end{align*}
This finishes the proof.
\end{proof}

\begin{cor}
For  a separating simple closed curve $c$,
we have $L(c) =0 \mod (\ker \epsilon)^2.$
\end{cor}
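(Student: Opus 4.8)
The plan is to deduce this corollary directly from Lemma~\ref{lemm_bounding_pair} by viewing a separating simple closed curve as one half of a degenerate bounding pair. More precisely, I would argue as follows. Let $c$ be a separating simple closed curve in $\Sigma$; since $\Sigma$ has connected boundary, $c$ separates $\Sigma$ into a subsurface $\Sigma'$ of positive genus $h$ with $\partial \Sigma' = c$, and a complementary piece. Then $c$ is freely homotopic to a product of commutators $[a_1,b_1]\cdots[a_h,b_h]$ with $a_i,b_i \in \pi_1(\Sigma)$ (represented in $\Sigma'$), so in the notation of Lemma~\ref{lemm_bounding_pair} we may take $c_1 = c$, $c_2$ a trivial (contractible) curve, and $r = 1$. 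The contractible curve $c_2$ satisfies $[c_2] = -2$ in $\skein{\Sigma}$ by the trivial knot relation, hence $L(c_2) = -\tfrac12(c_2+2) = 0$ exactly, so $L(c) = L(c_1) - L(c_2)$ modulo $(\ker\epsilon)^2$.

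Applying Lemma~\ref{lemm_bounding_pair} with $r = 1$ then gives
\begin{equation*}
L(c) = -\sum_{i=1}^{h} \lambda([1] \wedge [a_i] \wedge [b_i]) \mod (\ker\epsilon)^2,
\end{equation*}
and each term vanishes because $[1] = 0$ in $H = H_1(\Sigma,\Q)$, so $\lambda([1]\wedge[a_i]\wedge[b_i]) = 0$ by linearity of $\lambda$. Equivalently, at the level of the computation inside the proof of Lemma~\ref{lemm_bounding_pair}, the relevant bracket is $\kukakko{(1-1)(a_i-1)(b_i-1)} = \kukakko{0} = 0$. This yields $L(c) = 0 \mod (\ker\epsilon)^2$, which is the assertion since $(\ker\epsilon)^2 = F^4\widehat{\skein{\Sigma}}$ in the indexing used above.

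Alternatively, and perhaps more cleanly, I could bypass Lemma~\ref{lemm_bounding_pair} and compute directly: $L(c) \equiv -\tfrac12(c+2) \bmod (\ker\epsilon)^2$, and since $c$ is separating of genus $h$, write $c = [a_1,b_1]\cdots[a_h,b_h]$ as an element of $\pi_1(\Sigma)$ and telescope as in Lemma~\ref{lemm_bounding_pair} to get $-\tfrac12\kukakko{\prod[a_i,b_i] - 1} = -\sum_j \kukakko{(\prod_{i<j}[a_i,b_i] - 1)(a_j-1)(b_j-1)}$, then observe that modulo $(\ker\epsilon)^2$ one may replace $\prod_{i<j}[a_i,b_i] - 1$ by $0$ since $\kukakko{\cdot}$ is already quadratic; each summand lands in a filtration one degree too high. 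The only genuine point to be careful about is the justification that a separating curve in a surface with connected boundary can be represented as a product of commutators whose factors live in $\pi_1(\Sigma)$ — this is standard (it is exactly the statement that the class of $c$ in $H_1$ is zero, together with the fact that $\pi_1$ of the bounded-genus piece surjects appropriately), and it is the one step I would state carefully rather than leave implicit. After that, everything is a routine application of the already-established identities for $\kukakko{\cdot}$ and the definition of the filtration.
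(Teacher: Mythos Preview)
Your proposal is correct and matches the paper's intent: the corollary is stated immediately after Lemma~\ref{lemm_bounding_pair} with no separate proof, so the implicit argument is exactly the specialization $r=1$ that you carry out, with each term $\lambda([r]\wedge[a_i]\wedge[b_i])$ vanishing because $[r]=0$ in $H$. One small remark: invoking Lemma~\ref{lemm_bounding_pair} literally with $c_2$ contractible is slightly outside its stated hypotheses (a bounding pair consists of non-isotopic essential curves), so your ``alternative'' direct computation---rerunning the telescoping argument from the proof of the lemma with $r=1$---is the cleaner way to phrase it, and is exactly what the paper leaves the reader to do.
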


We denote by $\mathcal{L}_{\mathrm{bp}}(\Sigma)$ and
$\mathcal{L}_{\mathrm{sep}}(\Sigma)$
the set of all $L(c_1)-L(c_2)$ for 
 a pair $\shuugou{c_1,c_2}$
 of non-isotopic disjoint homologous curves,
i.e. which is a bounding pair (BP)
and the set of all $L(c)$
for a separating simple closed curve $c$,
i.e. for separating simple closed curve (sep. s.c.c.) $c$, respectively.
By the above lemma and corollary,
we have $\mathcal{L}_{\mathrm{bp}}(\Sigma) \cup
\mathcal{L}_{\mathrm{sep}}(\Sigma) \subset F^3 \widehat{\skein{\Sigma}}$.
We define
\begin{equation*}
I \skein{\Sigma} \defeq \zettaiti{\mathcal{L}_{\mathrm{gen}}(\Sigma)}
\end{equation*}
where we denote $\mathcal{L}_{\mathrm{gen}}(\Sigma) \defeq
\mathcal{L}_{\mathrm{comm}} (\Sigma)
\cup \mathcal{L}_{\mathrm{bp}}(\Sigma) \cup
\mathcal{L}_{\mathrm{sep}}(\Sigma)$.

We denote by
\begin{align*}
&\mathcal{I}_{\mathrm{comm}}(\Sigma) \defeq
\shuugou{C_{c_1c_2} \defeq [t_{c_1}, t_{c_2}]|\mu(c_1,c_2)=0}, \\
&\mathcal{I}_{\mathrm{sep}}(\Sigma) \defeq
\shuugou{t_{c_1c_2} \defeq t_{c_1}{t_{c_2}}^{-1}|\shuugou{c_1,c_2}: \mathrm{BP}}, \\
&\mathcal{I}_{\mathrm{sep}}(\Sigma) \defeq
\shuugou{t_{c}|c: \mathrm{sep. \ \ s.c.c.}}.
\end{align*}
The set $\mathcal{I}_{\mathrm{gen}}(\Sigma) \defeq
\mathcal{I}_{\mathrm{comm}} (\Sigma)
\cup \mathcal{I}_{\mathrm{bp}}(\Sigma) \cup
\mathcal{I}_{\mathrm{sep}}(\Sigma)$ is the generator set
of the infinite presentation of $\mathcal{I} (\Sigma)$ in \cite{Pu2008}.

\begin{df}
The map $\theta :\mathcal{L}_{\mathrm{gen}}(\Sigma)  \to \mathcal{I}_{\mathrm{gen}}(\Sigma)$ is
defined by the following.
\begin{itemize}
\item  For
a pair $\shuugou{c_1,c_2}$ of curves whose algebraic intersection number is $0$,
$C(c_1,c_2) \mapsto C_{c_1c_2}$.
\item For a pair $\shuugou{c_1,c_2}$
 of non-isotopic disjoint homologous curves, $L(c_1)-L(c_2) \mapsto t_{c_1c_2}$.
\item For
a separating simple closed curve $c$,
$\theta (L(c)) =t_c$.
\end{itemize}

\end{df}

In subsection \ref{subsection_injectivity_of_zeta}
Proposition \ref{prop_theta}, we prove $\theta$ is well-defined.

\subsection{Well-definedness of $\theta$}
\label{subsection_injectivity_of_zeta}

The aim of this subsection is to prove the following.

\begin{lemm}
\label{lemm_zeta_inv_inj}
The map $\theta:\mathcal{L}_{\mathrm{gen}}(\Sigma) \to
\mathcal{I}_{\mathrm{gen}}(\Sigma)$ induces
$\theta:I \skein{\Sigma} \to \mathcal{I}(\Sigma)$.
\end{lemm}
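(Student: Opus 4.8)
The plan is to show that $\theta$ respects the group structure, i.e.\ that for any relation among the generators $\mathcal{I}_{\mathrm{gen}}(\Sigma)$ in Putman's infinite presentation of $\mathcal{I}(\Sigma)$, the corresponding word in the elements of $\mathcal{L}_{\mathrm{gen}}(\Sigma)$ (with product given by $\bch$) is trivial in $(I\skein{\Sigma},\bch)$. More precisely, the free group $F$ on the set $\mathcal{I}_{\mathrm{gen}}(\Sigma)$ surjects onto $\mathcal{I}(\Sigma)$ with kernel normally generated by Putman's relators, and the set-map $\theta$ extends to a group homomorphism $\tilde\theta\colon F\to(I\skein{\Sigma},\bch)$ by $\tilde\theta(g)=\bch(\ldots)$ on the corresponding letters. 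It suffices to check that $\tilde\theta$ kills each Putman relator; then $\tilde\theta$ descends to the desired $\theta\colon I\skein{\Sigma}\to\mathcal{I}(\Sigma)$ — wait, the direction is the other way, so let me restate: we want $\theta\colon(I\skein{\Sigma},\bch)\to\mathcal{I}(\Sigma)$, and $I\skein{\Sigma}$ is \emph{defined} as the subgroup $\zettaiti{\mathcal{L}_{\mathrm{gen}}(\Sigma)}$ generated by $\mathcal{L}_{\mathrm{gen}}(\Sigma)$ inside $(F^3\widehat{\skein{\Sigma}},\bch)$. So the content is: the assignment on generators extends consistently, i.e.\ whenever a word $w$ in the elements of $\mathcal{L}_{\mathrm{gen}}(\Sigma)$ equals $0$ in $(I\skein{\Sigma},\bch)$, the corresponding word $\theta(w)$ equals $1$ in $\mathcal{I}(\Sigma)$.

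First I would reduce this to a statement about the actions on skein modules. By Theorem~\ref{thm_Dehn_twist}, for a simple closed curve $c$ the automorphism $t_c$ of $\widehat{\skein{\Sigma,J}}$ equals $\exp(\sigma(L(c)))$, and since $L(c)$ lies in an $S$ satisfying (\ref{equation_jouken_bch}) and (\ref{equation_bch_jouken_aut}) (Lemma~\ref{lemm_bch_jouken}), the map $\exp\circ\sigma$ is a group homomorphism $(\zettaiti{S},\bch)\to\Aut(\widehat{\skein{\Sigma,J}})$. Hence $\exp(\sigma(C(c_1,c_2)))=[t_{c_1},t_{c_2}]=C_{c_1c_2}$, $\exp(\sigma(L(c_1)-L(c_2)))=t_{c_1}t_{c_2}^{-1}=t_{c_1c_2}$ (using $\exp(\sigma(-L(c_2)))=t_{c_2}^{-1}$), and $\exp(\sigma(L(c)))=t_c$ on every $\widehat{\skein{\Sigma,J}}$. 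Therefore, for any word $w$ in elements of $\mathcal{L}_{\mathrm{gen}}(\Sigma)$, $\exp(\sigma(w))=\theta(w)$ as automorphisms of $\widehat{\skein{\Sigma,J}}$, for every finite $J\subset\partial\Sigma$. Now if $w=0$ in $(I\skein{\Sigma},\bch)$ then $\exp(\sigma(w))=\id$, so $\theta(w)$ acts as the identity on $\widehat{\skein{\Sigma,J}}$ for every $J$; choosing $J$ with $J\cap\partial_i\neq\emptyset$ for each boundary component (here there is only one), Proposition~\ref{prop_map_inj} gives that $\mathcal{M}(\Sigma)\to\Aut(\widehat{\skein{\Sigma,J}})$ is injective, so $\theta(w)=1$ in $\mathcal{M}(\Sigma)$, hence in $\mathcal{I}(\Sigma)$.

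The steps in order: (1) assemble from Theorem~\ref{thm_Dehn_twist} and the $\bch$-homomorphism property the identities $\exp(\sigma(x))=\theta(x)$ for $x$ a single generator in $\mathcal{L}_{\mathrm{comm}}$, $\mathcal{L}_{\mathrm{bp}}$, $\mathcal{L}_{\mathrm{sep}}$; (2) extend multiplicatively: $\exp(\sigma(\bch(x,y)))=\exp(\sigma(x))\circ\exp(\sigma(y))$, and $\exp(\sigma(-x))=\exp(\sigma(x))^{-1}$, so $\exp\circ\sigma$ is well-defined on all of $(I\skein{\Sigma},\bch)$ and agrees with the word-map built from $\theta$; (3) invoke injectivity of $\mathcal{M}(\Sigma)\hookrightarrow\Aut(\widehat{\skein{\Sigma,J}})$ (Proposition~\ref{prop_map_inj}) to conclude that $x\mapsto\theta(x)$, defined a priori only on generators, is consistent — i.e.\ depends only on $x\in I\skein{\Sigma}$ and not on the chosen word. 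One subtlety to address carefully in step~(1)–(2) is that $\exp\circ\sigma$ must genuinely be a homomorphism on the ambient group containing all of $\mathcal{L}_{\mathrm{gen}}(\Sigma)$, not just on each of the three families separately; this follows because $I\skein{\Sigma}=\zettaiti{\mathcal{L}_{\mathrm{gen}}(\Sigma)}\subset F^3\widehat{\skein{\Sigma}}$ and $F^3\widehat{\skein{\Sigma}}$ itself satisfies conditions (\ref{equation_jouken_bch}) and (\ref{equation_bch_jouken_aut}), as noted after Lemma~\ref{lemm_bch_jouken}.

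The main obstacle is really bookkeeping rather than a deep difficulty: one must make sure that the target group for $\theta$ is $\mathcal{I}(\Sigma)$ and not merely $\mathcal{M}(\Sigma)$, which requires knowing that each $\theta$-image of a generator — a commutator of Dehn twists, a bounding-pair map, or a separating twist — actually lies in the Torelli group; this is classical, since all three types act trivially on $H_1(\Sigma)$. The injectivity of $\theta$ (that $\theta(x)=1\Rightarrow x=0$) is explicitly deferred to subsection~\ref{subsection_well_defined_zeta} via Putman's presentation and is not needed here; the present lemma only asserts that $\theta$ is a well-defined homomorphism, so the argument above — identify $\theta$ with the intrinsically well-defined map $\exp\circ\sigma$ followed by the identification $\Aut$-image $\leftrightarrow$ mapping class — suffices.
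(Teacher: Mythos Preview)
Your argument is correct and is essentially the paper's own proof: assume a word $\bch(x_1,\ldots,x_j)=0$ in $I\skein{\Sigma}$, use Theorem~\ref{thm_Dehn_twist} together with the homomorphism property of $\exp\circ\sigma$ on $F^3\widehat{\skein{\Sigma}}$ to get $\prod_i\theta(x_i)(\cdot)=\exp(\sigma(\bch(x_1,\ldots,x_j)))(\cdot)=\id$ on every $\widehat{\skein{\Sigma,J}}$, then invoke Proposition~\ref{prop_map_inj} to conclude $\prod_i\theta(x_i)=\id$ in $\mathcal{M}(\Sigma)$. Your opening paragraph (checking Putman's relators) is a false start in the wrong direction---that is the content of subsection~\ref{subsection_well_defined_zeta}, not of this lemma---but you catch this yourself and the remainder matches the paper exactly.
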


\begin{prop}
\label{prop_theta}
The map $\theta$ is well-defined.
\end{prop}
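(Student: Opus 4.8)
The plan is to show that $\theta$ descends to a homomorphism on $I\skein{\Sigma}$ by exhibiting it as the restriction of a genuine group homomorphism $(\zettaiti{S},\bch)\to\Aut(\widehat{\skein{\Sigma,J}})$ for a suitable choice of $S$, and then invoking Putman's infinite presentation to identify the target. First I would observe that by Theorem \ref{thm_Dehn_twist}, each generator is realized on the automorphism side: $\exp(\sigma(L(c)))=t_c$ for any simple closed curve $c$, and hence $\exp(\sigma(C(c_1,c_2)))=[t_{c_1},t_{c_2}]$ and $\exp(\sigma(L(c_1)-L(c_2)))=t_{c_1}t_{c_2}^{-1}$ whenever the relevant $\bch$-expressions make sense — which is exactly what the preceding two lemmas and corollary guarantee, since they place all of $\mathcal{L}_{\mathrm{gen}}(\Sigma)$ inside $F^3\widehat{\skein{\Sigma}}$, and $F^3\widehat{\skein{\Sigma}}$ satisfies conditions (\ref{equation_jouken_bch}) and (\ref{equation_bch_jouken_aut}) as noted in subsection \ref{subsection_bch}. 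Thus $\exp\circ\sigma$ gives a well-defined group homomorphism $\Theta:(I\skein{\Sigma},\bch)\to\Aut(\widehat{\skein{\Sigma,J}})$ sending each element of $\mathcal{L}_{\mathrm{gen}}(\Sigma)$ to the corresponding element of $\mathcal{I}_{\mathrm{gen}}(\Sigma)$.

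Next I would argue that the image of $\Theta$ lands in $\mathcal{M}(\Sigma)$ — in fact in $\mathcal{I}(\Sigma)$ — because it is generated by the images of the generators in $\mathcal{I}_{\mathrm{gen}}(\Sigma)$, all of which are mapping classes in the Torelli group. Since $\mathcal{M}(\Sigma)\to\Aut(\widehat{\skein{\Sigma,J}})$ is injective by Proposition \ref{prop_map_inj} (taking $J$ to meet the single boundary component), we may regard $\Theta$ as a homomorphism $I\skein{\Sigma}\to\mathcal{I}(\Sigma)$, and this is precisely the map $\theta$ on generators. To make the descent rigorous: an arbitrary element of $I\skein{\Sigma}$ is a $\bch$-word $\bch(\epsilon_1 x_1,\dots,\epsilon_m x_m)$ in the $x_i\in\mathcal{L}_{\mathrm{gen}}(\Sigma)$; apply $\Theta$ and use multiplicativity of $\exp\circ\sigma$ to get $\theta(x_1)^{\epsilon_1}\cdots\theta(x_m)^{\epsilon_m}\in\mathcal{I}(\Sigma)$; if two such words represent the same element of $I\skein{\Sigma}$ their difference is a trivial $\bch$-word, which $\Theta$ sends to the identity automorphism, hence to the identity mapping class. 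This shows the formula $\theta(\bch(\epsilon_1 x_1,\dots,\epsilon_m x_m))=\prod\theta(x_i)^{\epsilon_i}$ is independent of the chosen word, which is exactly well-definedness of $\theta$ on $I\skein{\Sigma}$ (Lemma \ref{lemm_zeta_inv_inj}), and simultaneously that $\theta$ is a homomorphism.

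It remains to check surjectivity onto $\mathcal{I}(\Sigma)$ — needed so that $\theta$ really maps into $\mathcal{I}(\Sigma)$ as claimed and, later, to make sense of $\zeta=\theta^{-1}$ — but this is immediate from Putman's theorem \cite{Pu2008}: $\mathcal{I}_{\mathrm{gen}}(\Sigma)$ generates $\mathcal{I}(\Sigma)$, and every generator is in the image of $\theta$. For the present proposition only well-definedness is asserted, so I would present the two paragraphs above as the proof, perhaps folding in the remark that the same reasoning, read backwards, will be used in subsection \ref{subsection_well_defined_zeta} to set up $\zeta$.

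The main obstacle I anticipate is purely bookkeeping: one must be careful that the set $S$ (or $\zettaiti{S}$) used to define $\bch$ is large enough to contain all of $\mathcal{L}_{\mathrm{gen}}(\Sigma)$ and closed enough for the group and automorphism properties of $\bch$ to hold simultaneously — here the cleanest route is to take $S=F^3\widehat{\skein{\Sigma}}$ outright, which the text already certifies satisfies both (\ref{equation_jouken_bch}) and (\ref{equation_bch_jouken_aut}), rather than trying to use the finer $V_1\bullet V_2$ subspaces of Lemma \ref{lemm_bch_jouken}. A secondary subtlety is that $\theta$ is defined case-by-case on overlapping families (a separating curve is a degenerate bounding pair, etc.), so one should note that $L(c)=\bch(L(c))$ and $L(c_1)-L(c_2)=\bch(L(c_1),-L(c_2))$ make the three bullets of the definition mutually consistent inside the single homomorphism $\exp\circ\sigma$; once everything is phrased through $\exp\circ\sigma$ this consistency is automatic.
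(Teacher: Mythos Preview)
Your approach is correct and is precisely the paper's: apply Theorem \ref{thm_Dehn_twist} to identify $\exp(\sigma(x))$ with the prescribed mapping class for each generator $x\in\mathcal{L}_{\mathrm{gen}}(\Sigma)$, then use the injectivity of Proposition \ref{prop_map_inj} to conclude that the mapping class depends only on $x$ and not on the curve data used to present it. Be aware, though, that the paper separates the argument into two steps --- Proposition \ref{prop_theta} is only the well-definedness of $\theta$ as a \emph{set} map $\mathcal{L}_{\mathrm{gen}}(\Sigma)\to\mathcal{I}_{\mathrm{gen}}(\Sigma)$, while the extension to a group homomorphism on $(I\skein{\Sigma},\bch)$ that you also argue is the content of the subsequent Lemma \ref{lemm_zeta_inv_inj}; your write-up proves both at once, so the surjectivity discussion and the $\bch$-word bookkeeping are beyond what this proposition requires.
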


\begin{proof}
By Theorem \ref{thm_Dehn_twist}, we have the followings.

\begin{itemize}
\item  For
a pair $\shuugou{c_1,c_2}$ of curves whose algebraic intersection number is $0$,
$\exp(\sigma(C(c_1,c_2)))(\cdot)= C_{c_1c_2}(\cdot)\in \Aut (\widehat{\skein{\Sigma,J}})$ for any finite set $J \in \partial{\Sigma}$.
\item For a pair $\shuugou{c_1,c_2}$
 of non-isotopic disjoint homologous curves, 
$\exp(\sigma(L(c_1)-L(c_2))) (\cdot)= t_{c_1c_2}(\cdot)\in \Aut (\widehat{\skein{\Sigma,J}})$ for any finite set $J \in \partial{\Sigma}$.
\item For
a separating simple closed curve $c$,
$\exp(\sigma(L(c)))(\cdot) =t_c(\cdot) \in \Aut (\widehat{\skein{\Sigma,J}})$
 for any finite set $J \in \partial{\Sigma}$.
\end{itemize}

By Proposition \ref{prop_map_inj}, $\theta$ is well-defined.
This finishes the proof.
\end{proof}

By Theorem \ref{thm_Dehn_twist}, we have the following.

\begin{lemm}
\label{lemm_theta}
For $x_1,x_2, \cdots, x_j  \in \shuugou{\epsilon x|\epsilon \in \shuugou{\pm1},
x \in \mathcal{L}_{\mathrm{gen}}(\Sigma)}$,
 we have 
\begin{equation*}
\prod_{i=1}^j(\theta ( x_i)) (\cdot) =
\exp(\bch( x_1,  x_2, \cdots, x_j))(\cdot)  \in \Aut (\widehat{\skein{\Sigma,J}})
\end{equation*} 
for any finite set $J \subset \partial \Sigma$.
\end{lemm}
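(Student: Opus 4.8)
The plan is to reduce the statement to the group-homomorphism property of $\exp$ established in subsection \ref{subsection_bch}, combined with the three bullet points already recorded in the proof of Proposition \ref{prop_theta}. First I would fix a finite subset $J \subset \partial\Sigma$. The key observation is that each $x_i$ lies in $\pm \mathcal{L}_{\mathrm{gen}}(\Sigma)$, hence in $\zettaiti{S}$ for the set $S = \mathcal{L}_{\mathrm{gen}}(\Sigma)$ (or, more carefully, $S = \mathcal{L}_{\mathrm{comm}}(\Sigma) \cup \mathcal{L}_{\mathrm{bp}}(\Sigma) \cup \mathcal{L}_{\mathrm{sep}}(\Sigma)$ together with its negatives); by the Lemma preceding the definition of $C(c_1,c_2)$, together with Lemma \ref{lemm_bounding_pair} and its Corollary, every generator of $\mathcal{L}_{\mathrm{gen}}(\Sigma)$ satisfies the conditions \eqref{equation_jouken_bch} and \eqref{equation_bch_jouken_aut}. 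One must check that the \emph{union} of these generating families still satisfies those conditions; this follows because each family satisfies the hypothesis of Lemma \ref{lemm_bch_jouken} with an appropriate pair $V_2 \subset V_1$, and taking $V_1 = H$ with $V_2$ the (isotropic) span relevant to the pair, the conditions are uniform in the sense needed. Granting this, $(\zettaiti{S},\bch)$ is a group and $\exp : (\zettaiti{S},\bch) \to \Aut(\widehat{\skein{\Sigma,J}})$ is a group homomorphism.

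Next I would run the induction on $j$. For $j=1$, the statement is exactly one of the three bullet points in the proof of Proposition \ref{prop_theta}: for $x_1 = \epsilon\, C(c_1,c_2)$, $x_1 = \epsilon\,(L(c_1)-L(c_2))$ or $x_1 = \epsilon\, L(c)$ with $\epsilon = +1$ it is literally stated, and for $\epsilon = -1$ it follows by applying $\exp(\sigma(-x)) = \exp(\sigma(x))^{-1}$ to both sides, using that $t_c, t_{c_1 c_2}, C_{c_1 c_2}$ are invertible in the mapping class group and $\theta(-x) = \theta(x)^{-1}$ by definition (here I use $\bch(a,-a)=0$). For the inductive step, write
\begin{equation*}
\exp(\bch(x_1,\dots,x_j))(\cdot) = \exp\bigl(\bch(x_1, \bch(x_2,\dots,x_j))\bigr)(\cdot)
= \exp(\sigma(x_1)) \circ \exp\bigl(\sigma(\bch(x_2,\dots,x_j))\bigr)(\cdot),
\end{equation*}
where the first equality is associativity of $\bch$ and the second is the homomorphism property of $\exp$. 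By the induction hypothesis the second factor equals $\prod_{i=2}^j \theta(x_i)(\cdot)$, and by the $j=1$ case the first factor equals $\theta(x_1)(\cdot)$, giving $\prod_{i=1}^j \theta(x_i)(\cdot)$ as desired.

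The main obstacle is the bookkeeping in the first paragraph: verifying that the \emph{combined} generating set $S$ (all of $\mathcal{L}_{\mathrm{comm}}$, $\mathcal{L}_{\mathrm{bp}}$, $\mathcal{L}_{\mathrm{sep}}$ at once) satisfies \eqref{equation_jouken_bch} and \eqref{equation_bch_jouken_aut}, since Lemma \ref{lemm_bch_jouken} is stated for a single fixed pair $V_2 \subset V_1$. The point to make is that these conditions only require the \emph{existence}, for each $i$, of a uniform bound $j_i$ (resp.\ $j'$) such that $j_i$-fold composition of $\sigma$ of \emph{arbitrary} elements of $S$ shifts the filtration; since every element of $\mathcal{L}_{\mathrm{gen}}(\Sigma)$ lies in $F^3\widehat{\skein{\Sigma}}$, one has $\sigma(a)(F^i\skein{\Sigma}) \subset F^{i+1}\skein{\Sigma}$ already for $j_i = 1$ by the bracket estimate $[F^3, F^i] \subset F^{i+1}$ recorded in the introduction, and similarly $\sigma(a)(\skein{\Sigma,J}) \subset \ker\epsilon\,\skein{\Sigma,J}$ for a single application. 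Thus \eqref{equation_jouken_bch} and \eqref{equation_bch_jouken_aut} hold for $S$ with $j_i = j' = 1$, no appeal to the finer structure of Lemma \ref{lemm_bch_jouken} being needed, and the rest of the argument goes through verbatim. This finishes the proof.
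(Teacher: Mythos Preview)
Your proof is correct and follows the same route the paper implicitly takes: the paper records only the one-line justification ``By Theorem \ref{thm_Dehn_twist}, we have the following'' before the lemma, and what you have written is exactly the intended unpacking---use that $\mathcal{L}_{\mathrm{gen}}(\Sigma)\subset F^3\widehat{\skein{\Sigma}}$ so that (as the paper notes just after \eqref{equation_bch_jouken_aut}) $\exp$ is a group homomorphism on $(\zettaiti{\mathcal{L}_{\mathrm{gen}}(\Sigma)},\bch)$, then combine this with the three bullet points from Proposition~\ref{prop_theta} via induction. One small remark: your claim that \eqref{equation_bch_jouken_aut} holds with $j'=1$ is not justified in the paper and is not needed---it suffices to cite the paper's assertion that $F^3\widehat{\skein{\Sigma}}$ satisfies both conditions, and your initial detour through Lemma~\ref{lemm_bch_jouken} can simply be dropped in favor of this direct appeal.
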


\begin{proof}[Proof of Lemma \ref{lemm_zeta_inv_inj}]
For $x_1,x_2, \cdots, x_j ,
\in \shuugou{\epsilon x|\epsilon  \in \shuugou{\pm1},
x \in \mathcal{L}_{\mathrm{gem}} (\Sigma)}$,
we assume \begin{equation*}
\bch(x_1, \cdots, x_j) = 0. \end{equation*}
By Lemma \ref{lemm_theta}, we have
\begin{equation*}
\prod_{i=1}^j\theta (x_i) (\cdot) =\exp (\sigma (\bch (x_1, \cdots, x_j)))( \cdot) =\id (\cdot) 
\in \Aut (\widehat{\skein{\Sigma,J}})
\end{equation*}
for any finite set $J \subset \partial \Sigma$.
By Proposition \ref{prop_map_inj}, we have
$
\prod_{i=1}^j\theta (x_i) =\id.
$
This finishes the proof.
\end{proof}

\subsection{Well-definedness of $\zeta$}
\label{subsection_well_defined_zeta}
In this section, we prove $\theta$ is injective.
In order to check Putman's relation \cite{Pu2008},
we need the following.

\begin{lemm}
\label{lemm_relation_key}
Let $\Sigma$ be a compact surface with non-empty boundary.
We choose an element $x \in \widehat{\skein{\Sigma}}$ satisfying
$\sigma (x)(\widehat{\skein{\Sigma,J}}) =
\shuugou{0}$ for any finite set $J \subset \partial \Sigma$.
Then we have followings.
\begin{enumerate}
\item For any embedding $e' :\Sigma \to \tilde{\Sigma}$ and any element $\xi \in 
\mathcal{M}(\tilde{\Sigma})$, we have $\xi (e'(x)) =e'(x) \in 
\widehat{\skein{\tilde{\Sigma}}}$ where 
we also denote by $e'$ the homomorphism $\widehat{\skein{\Sigma}}
\to \widehat{\skein{\tilde{\Sigma}}}$ induced by $e'$.
\item For any embedding $e'':\Sigma \times I  \to I^3 \to \Sigma \times I$,
we have $x =e''(x) \in \Q [[A+1]] [\emptyset] \subset \widehat{\skein{\Sigma}}$
where we also denote by $e''$ the homomorphism $\widehat{\skein{\Sigma}}
\to \widehat{\skein{\Sigma}}$ induced by $e''$.
\end{enumerate}
\end{lemm}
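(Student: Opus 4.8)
The plan is to reduce both assertions to the key hypothesis that $\sigma(x)$ annihilates $\widehat{\skein{\Sigma,J}}$ for every finite $J\subset\partial\Sigma$, and then to exploit functoriality of the skein module under embeddings of surfaces. For (1), I would first record the naturality statement: an embedding $e':\Sigma\to\tilde\Sigma$ induces a ring homomorphism $e':\widehat{\skein{\Sigma}}\to\widehat{\skein{\tilde\Sigma}}$ and, after choosing a finite set $\tilde J\subset\partial\tilde\Sigma$ large enough, a compatible homomorphism $\widehat{\skein{\Sigma,J}}\to\widehat{\skein{\tilde\Sigma,\tilde J}}$ intertwining the two $\sigma$-actions, i.e. $\sigma(e'(x))(e'(z))=e'(\sigma(x)(z))$. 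From this and the hypothesis we get that $\sigma(e'(x))$ kills the image of $\widehat{\skein{\Sigma,\tilde J\cap\partial\Sigma}}$; the point is then to upgrade this to: $\sigma(e'(x))$ kills all of $\widehat{\skein{\tilde\Sigma,\tilde J}}$. This should follow because $e'(x)$, being (modulo the filtration) a polynomial in simple closed curves supported in $e'(\Sigma)$, acts on a diagram in $\tilde\Sigma\times I$ only through the strands passing through $e'(\Sigma)\times I$, so the commutator $\sigma(e'(x))$ can be computed locally inside $e'(\Sigma)$; invoking the hypothesis there gives $0$. Once $\sigma(e'(x))=0$ on every $\widehat{\skein{\tilde\Sigma,\tilde J}}$, Theorem \ref{thm_Dehn_twist} shows that every Dehn twist $t_c$ on $\tilde\Sigma$ commutes with $\exp(\sigma(e'(x)))=\id$ in a trivial way — but more to the point, it shows directly that the conjugation formula for $\xi\in\mathcal{M}(\tilde\Sigma)$ acting on $e'(x)$ is governed by $\sigma$: writing $\xi$ as a product of Dehn twists and using $t_c(y)=\exp(\sigma(L(c)))(y)$ together with $[\sigma(L(c)),\sigma(e'(x))]=\sigma([L(c),e'(x)])$ and the vanishing of $\sigma(e'(x))$, one concludes $\xi(e'(x))=e'(x)$. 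I would phrase this cleanly by noting that $\mathcal{M}(\tilde\Sigma)$ acts on $\widehat{\skein{\tilde\Sigma}}$ and, for any $z$ in any $\widehat{\skein{\tilde\Sigma,J}}$, $\sigma(\xi(e'(x)))(z)=\xi(\sigma(e'(x))(\xi^{-1}z))=0$, whence $\xi(e'(x))-e'(x)$ lies in the kernel of all these $\sigma$-actions; then Proposition \ref{prop_map_inj}-type injectivity (the statement that an element acting trivially by $\sigma$ on all $\widehat{\skein{\tilde\Sigma,J}}$ and lying appropriately in the filtration is forced, plus the center computation) pins it down — here I would instead directly use that the map sending an element to its family of $\sigma$-actions, restricted to the relevant filtration level, is injective, which is how Proposition \ref{prop_map_inj} is proved.

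For (2), the embedding $e'':\Sigma\times I\hookrightarrow I^3\hookrightarrow\Sigma\times I$ factors through a cube, so $e''$ sends every simple closed curve into a contractible region; hence on the skein-algebra level $e''$ lands in the subalgebra generated by the empty link together with contractible circles, which after applying the trivial knot relation is exactly $\Q[[A+1]][\emptyset]\subset\widehat{\skein{\Sigma}}$. Thus $e''(x)\in\Q[[A+1]][\emptyset]$ automatically. It remains to show $x=e''(x)$. I would compare the two via the hypothesis again: $e''$ also acts compatibly on $\widehat{\skein{\Sigma,J}}$, and $\sigma(e''(x))(z)=e''(\sigma(x)(\bar z))$ for $\bar z$ the appropriate preimage, which vanishes; but an element of $\Q[[A+1]][\emptyset]$ is central, so this is automatic and gives no information — so instead I would use that $e''$ is, up to isotopy inside $\Sigma\times I$ of the ambient $3$-manifold (i.e. an ambient isotopy shrinking $\Sigma\times I$ into the cube and back), the identity on skein modules, hence $e''=\id$ on $\widehat{\skein{\Sigma}}$; more carefully, $e''$ being a composition through $I^3$ shows $e''(y)$ for any $y$ equals the image of $y$ under "collapse to a ball then re-include," and since $\widehat{\skein{\Sigma}}\to\widehat{\skein{I^3}}=\Q[[A+1]]\to\widehat{\skein{\Sigma}}$ is the augmentation followed by the unit, we get $e''(x)=\epsilon(x)\cdot[\emptyset]$. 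Finally I would argue $x=\epsilon(x)[\emptyset]$: the hypothesis $\sigma(x)=0$ on all $\widehat{\skein{\Sigma,J}}$ forces $x$ into the center of $\widehat{\skein{\Sigma}}$ in a strong sense, and combined with a center computation (the intersection of all such kernels is $\Q[[A+1]][\emptyset]$, which is precisely what Proposition \ref{prop_map_inj}'s proof establishes for $J$ meeting every boundary component), we get $x\in\Q[[A+1]][\emptyset]$, hence $x=\epsilon(x)[\emptyset]=e''(x)$.

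\emph{The main obstacle.} The crux is the "locality" claim used in (1): that $\sigma(e'(x))$ can be evaluated on a diagram in $\tilde\Sigma\times I$ by restricting attention to the part of the diagram inside $e'(\Sigma)\times I$, so that the hypothesis on $\Sigma$ transfers to $\tilde\Sigma$. Making this precise requires knowing that the action $\sigma(y)$ for $y$ supported in a subsurface is computed by resolving only the crossings created near that subsurface, which should follow from the definition of $\sigma$ via Figure \ref{fig_product_action} (pushing $y$ in the $I$-direction past $z$ and taking the difference) together with the fact that $e'(x)$, modulo higher filtration, is a combination of curves in $e'(\Sigma)$ — but one must check that the higher-filtration corrections do not spoil the vanishing, which is where I expect the bookkeeping to be delicate; I would handle it by induction on the filtration degree, using $[F^3,F^n]\subset F^{n+1}$ to control the tail. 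The identification $\widehat{\skein{I^3}}\cong\Q[[A+1]]$ and the compatibility of $e''$ with the completions are routine given the trivial knot relation and the definition of the $\ker\epsilon$-adic topology.
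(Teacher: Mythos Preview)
Your treatment of part (1) is essentially the paper's argument: reduce to Dehn twists and use $t_c(e'(x))=\exp(\sigma(L(c)))(e'(x))$, so it suffices that $\sigma(L(c))(e'(x))=-\sigma(e'(x))(L(c))$ vanishes. The ``locality'' step you flag as the main obstacle is real but much easier than you suggest: since $e'(x)$ is supported in $e'(\Sigma)\times I$, the commutator $e'(x)\cdot z-z\cdot e'(x)$ for any tangle $z$ in $\tilde\Sigma\times I$ is computed by passing $e'(x)$ through only those strands of $z$ that meet $e'(\Sigma)\times I$; these strands form an element of $\skein{\Sigma,J'}$ with $J'=z\cap\partial e'(\Sigma)$, and the hypothesis kills it. No induction on filtration degree is needed --- this is a statement about the stacking product before any completion. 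Your alternative route for (1), showing that $\xi(e'(x))-e'(x)$ has trivial $\sigma$-action and then invoking an injectivity statement to conclude it is zero, is circular: that injectivity is exactly the content of part (2).

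Your approach to part (2) has a genuine gap. First, the map $e''$ is \emph{not} the augmentation followed by the unit: the factorisation $\widehat{\skein{\Sigma}}\to\widehat{\skein{I^2}}\cong\Q[[A+1]]\to\widehat{\skein{\Sigma}}$ sends a link to its Kauffman bracket polynomial in $A$, not to its value at $A=-1$. For instance $x=(A+1)[\emptyset]$ satisfies the hypothesis but $\epsilon(x)[\emptyset]=0\neq x$. Second, and more seriously, the ``center computation'' you invoke --- that $\sigma(x)=0$ on every $\widehat{\skein{\Sigma,J}}$ forces $x\in\Q[[A+1]][\emptyset]$ --- is precisely the assertion of part (2), so appealing to it (or to a hoped-for proof of Proposition~\ref{prop_map_inj}) is circular. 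The paper instead deduces (2) from (1) by a geometric trick: one first observes that any two embeddings $\Sigma\times I\to I^3\to\Sigma\times I$ induce the same map on skein, so it suffices to exhibit a single $e''$ with $x=e''(x)$. One then builds an auxiliary surface $\tilde\Sigma$ with connected boundary containing two disjoint embedded copies $\Sigma',\Sigma''\cong\Sigma$, together with an embedding $e''':\tilde\Sigma\times I\to\Sigma\times I$ such that $e'''$ restricted to $\Sigma'\times I$ induces the identity on $\widehat{\skein{\Sigma}}$, while $e'''$ restricted to $\Sigma''\times I$ factors through a cube $I^3\subset\Sigma\times I$. Since $\Sigma'$ and $\Sigma''$ are diffeomorphic disjoint subsurfaces of $\tilde\Sigma$, some $\xi\in\mathcal{M}(\tilde\Sigma)$ carries one to the other; part (1) then gives $\chi'_*(x)=\xi(\chi'_*(x))=\chi''_*(x)$ in $\widehat{\skein{\tilde\Sigma}}$, and pushing forward by $e'''$ yields $x=e''(x)$ with $e''=e'''\circ(\chi''\times\id_I)$.
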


\begin{proof}[Proof of (1)]
Since $\mathcal{M} (\tilde{\Sigma})$ is generated by Dehn twists,
it is enough to check $t_c (x) =x $ for  any simple closed curve $c$.
By assumption of $x$ and Theorem \ref{thm_Dehn_twist}, we have
\begin{align*}
t_c(x) =\exp(\sigma(L(c))(x) =x.
\end{align*}
This finishes the proof.
\end{proof}

\begin{proof}[Proof of (2)]
It is enough to show there exists an embedding
$e'':\Sigma \times I  \to I^3 \to \Sigma \times I$
such that $x = e''(x)$.
We choose compact connected surfaces $\tilde{\Sigma}$
 with non-empty connected boundary and
an embedding $e''': \tilde{\Sigma} \times I \to \Sigma \times I$
such that there
exists submanifolds $\Sigma'$, $\Sigma'' \subset \tilde{\Sigma}$
satisfying the followings.

\begin{itemize}
\item The intersection $\Sigma' \cap \Sigma''$ is empty.
\item There exists diffeomorphisms $\chi' : \Sigma \to \Sigma'$, 
$\chi'' : \Sigma \to \Sigma''$.
\item There exists en embedding $e_{I^3}: I^3 \to \Sigma \times I$ satisfying
\begin{equation*}
e'''\circ (\chi'' \times \id_I) (\Sigma \times I) \subset e''(I^3).
\end{equation*}
\item The embedding $e''' \circ (\chi' \times \id_I)$ induces the identity map of
$\widehat{\skein{\Sigma}}$.
\end{itemize}

\begin{figure}
\begin{picture}(300,540)
\put(0,240){\includegraphics[width=310pt]{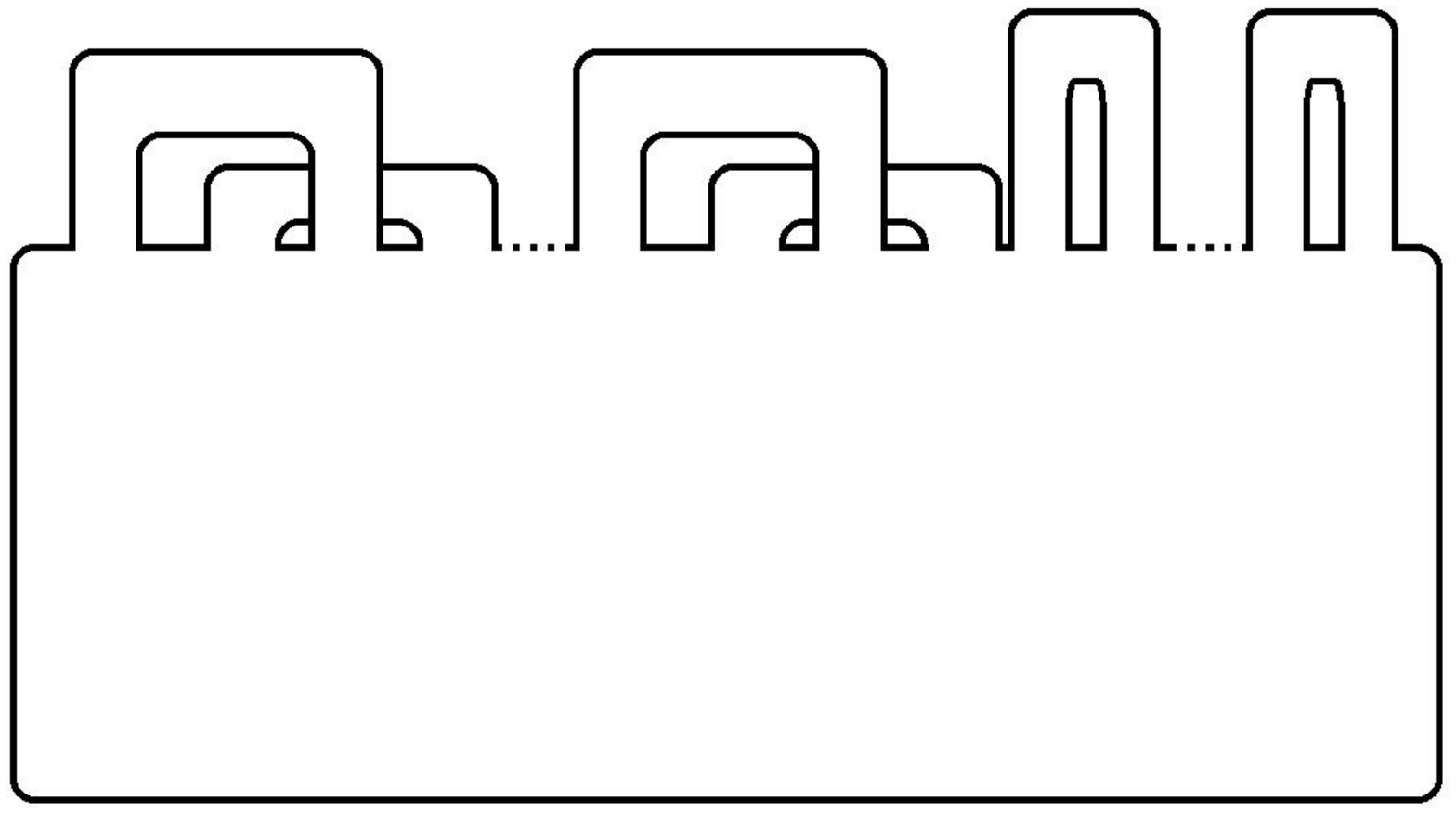}}
\put(0,60){\includegraphics[width=310pt]{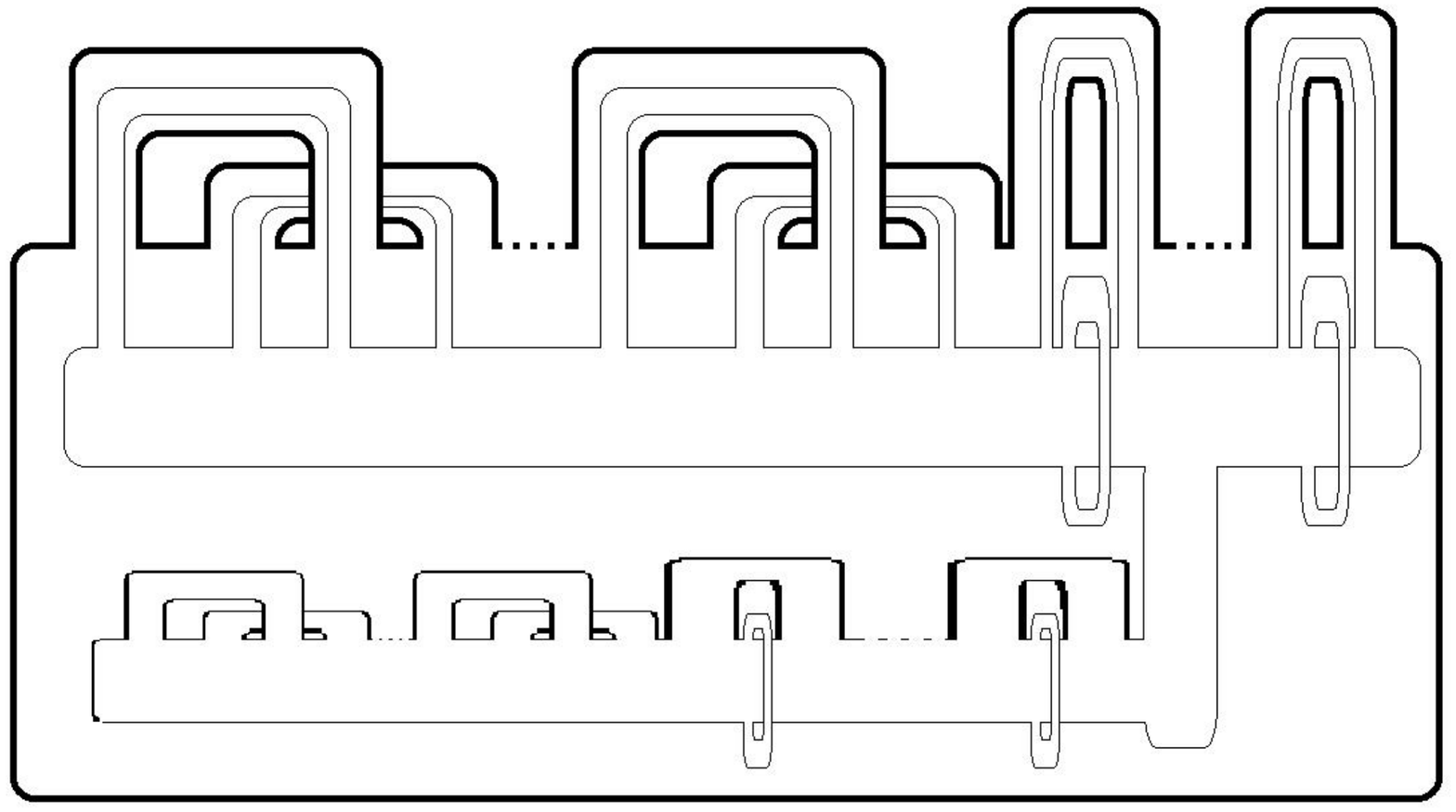}}
\put(0,-110){\includegraphics[width=310pt]{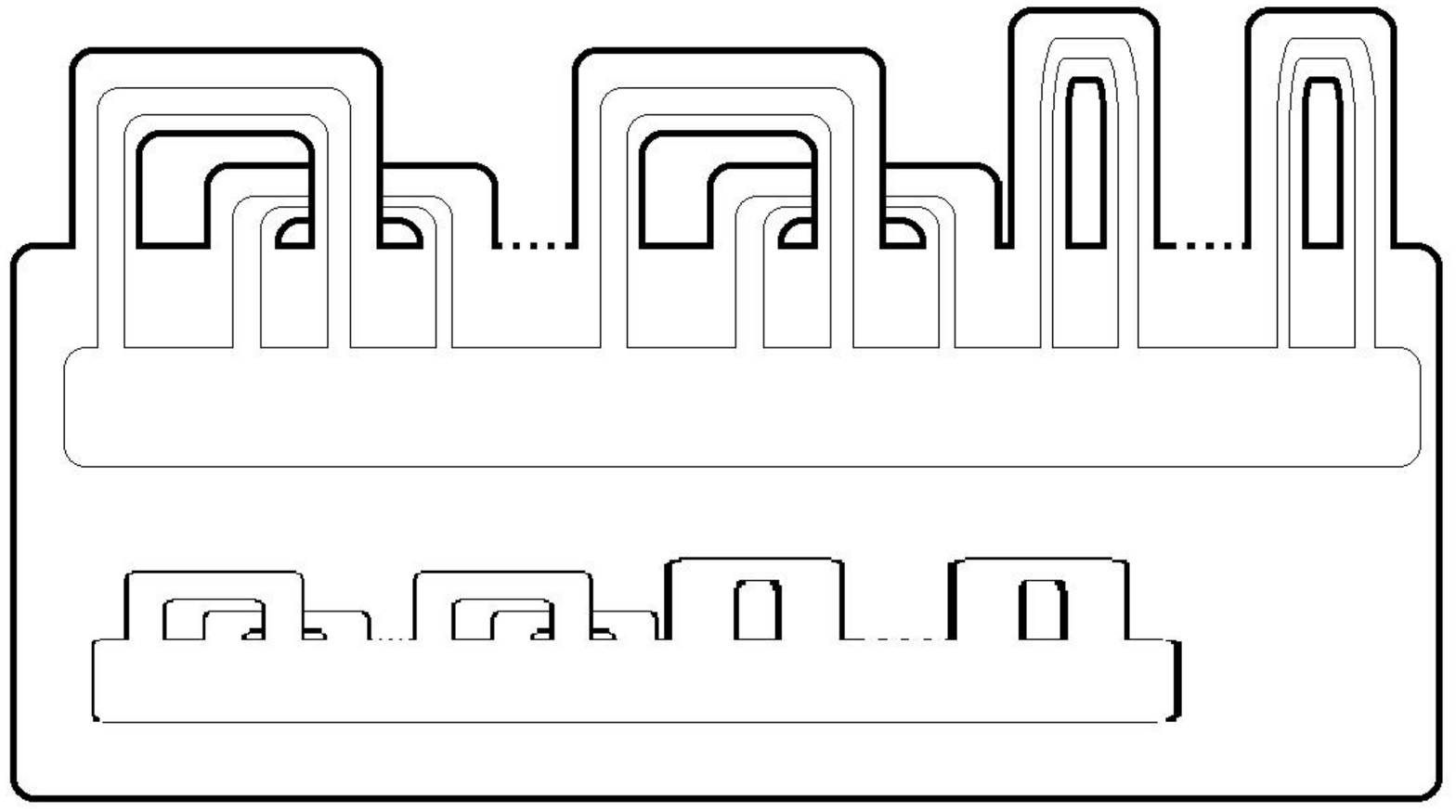}}
\put(90,370){{$\Sigma$}}
\put(120,205){$\tilde{\Sigma}$}
\put(240,90){{$\Sigma'$}}
\put(90,35){$\Sigma''$}
\end{picture}
\caption{$e''': \tilde{\Sigma}\times I \to \Sigma \times I$}
\label{figure_embedding_Sigma_tilde}
\end{figure}

For example, see Figure \ref{figure_embedding_Sigma_tilde}.
By (1), we have $\chi'(x) =\chi''(x)$.
Hence, we obtain $x =e''' \circ (\chi' \times \id_I) (x) =e''' \circ 
(\chi'' \times \id_I)(x)$. The embedding $e''' \circ 
(\chi'' \times \id_I)$ satisfies the claim $e''$.
This finishes the proof.

\end{proof}

To check the relations (F.1), (F.2), ... (F.7) and (F.8)
in \cite{Pu2008}, we need the following.
We obtain the lemma by a straightforward calculation
and definition of the Baker-Campbell-Hausdorff series.
Lemma \ref{lemm_relation_F} (k) corresponds 
(F.k) for $k=1,2, \cdots ,8$.

\begin{lemm}
\label{lemm_relation_F}
\begin{enumerate}
\item For a pair $\shuugou{c_1,c_2}$
 of non-isotopic disjoint homologous curves, 
we have
\begin{equation*}
L(c_1)-L(c_2) =-(L(c_2)-L(c_1)).
\end{equation*}
\item For a pair $\shuugou{c_1,c_2}$
 of curves whose algebraic intersection number is $0$, we have
\begin{equation*}
\bch (\bch(L(c_1),L(c_2),-L(c_1),-L(c_2)),\bch(L(c_2),L(c_1),-L(c_2),-L(c_1)))=0.
\end{equation*}
\item For a triple $\shuugou{c_1,c_2,c_3}$
of non-isotopic disjoint homologous curves, we have
\begin{equation*}
\bch(L(c_1)-L(c_2), L(c_2)-L(c_3)) = L(c_1)-L(c_3).
\end{equation*}
\item If $\shuugou{c_1,c_2}$ is a pair of
non-isotopic disjoint homologous curves such that $c_1$ and $c_2$
are separating curves, we have
\begin{equation*}
L(c_1)-L(c_2) = \bch (L(c_1),-L(c_2)).
\end{equation*}
\item If $\shuugou{c_1,c_2}$ is a pair 
of non-isotopic disjoint homologous curves and
$\shuugou{c_3,c_2}$ is a pair
of curves whose algebraic intersection number is $0$ such that $c_1$ and $c_3$ are disjoint,
we have
\begin{equation*}
L(t_{c_3}(c_2)) -L(c_{1}) =\bch (\bch(L(c_3),L(c_2),-L(c_3),-L(c_2)),L(c_2)-L(c_1)).
\end{equation*}
\item  For
a pair $\shuugou{c_1,c_2}$ of curves whose algebraic intersection number is $0$
and $x \in \mathcal{L}_{\mathrm{gen}}(\Sigma)$, we have
\begin{equation*}
\bch(x,C(c_1,c_2),-x) =C(\theta(x)(c_1),\theta(x)(c_2)).
\end{equation*}
\item For a pair $\shuugou{c_1,c_2}$
 of non-isotopic disjoint homologous curves
and $x \in \mathcal{L}_{\mathrm{gen}}(\Sigma)$, we have
\begin{equation*}
\bch(x,L(c_1)-L(c_2),-x) =L(\theta(x)(c_1))-L(\theta(x)(c_2)).
\end{equation*}
\item For
a separating simple closed curve $c$
and $x \in \mathcal{L}_{\mathrm{gen}}(\Sigma)$, we have
\begin{equation*}
\bch(x,L(c),-x) =L(\theta(x)(c))
\end{equation*}
\end{enumerate}
\end{lemm}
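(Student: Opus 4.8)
The plan is to treat all eight items uniformly by exploiting the faithfulness of the action, as was done for Lemma~\ref{lemm_zeta_inv_inj}. Each identity (k) is an equality of two elements of $F^3\widehat{\skein{\Sigma}}$ built by $\bch$ from elements of $\mathcal{L}_{\mathrm{gen}}(\Sigma)$; call them $u_k$ and $v_k$. By Lemma~\ref{lemm_theta} and Theorem~\ref{thm_Dehn_twist}, both $\exp(\sigma(u_k))$ and $\exp(\sigma(v_k))$ act on $\widehat{\skein{\Sigma,J}}$ as the corresponding product of Dehn twists in $\mathcal{M}(\Sigma)$. So it suffices to verify that the two mapping class group words agree — these are exactly Putman's relations (F.k), which hold by \cite{Pu2008} — and then invoke Proposition~\ref{prop_map_inj} (injectivity of $\mathcal{M}(\Sigma)\to\Aut(\widehat{\skein{\Sigma,J}})$ for a suitable $J$ meeting every boundary component) to conclude $u_k=v_k$ in $\widehat{\skein{\Sigma}}$, since $F^3\widehat{\skein{\Sigma}}\to\Aut(\widehat{\skein{\Sigma,J}})$, $x\mapsto\exp(\sigma(x))$, is injective on this subgroup.

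Concretely I would go item by item. For (1), $L(c_1)-L(c_2)=-(L(c_2)-L(c_1))$ is immediate from the linearity of the difference, and no action argument is needed. For (3) and (4), the relevant mapping class identities are $t_{c_1c_2}t_{c_2c_3}=t_{c_1c_3}$ and $t_{c_1c_2}=t_{c_1}t_{c_2}^{-1}$, which are tautological once one recalls $t_{c_ic_j}=t_{c_i}t_{c_j}^{-1}$; here $\exp(\sigma(\bch(L(c_1)-L(c_2),L(c_2)-L(c_3))))=t_{c_1}t_{c_2}^{-1}t_{c_2}t_{c_3}^{-1}=t_{c_1}t_{c_3}^{-1}$, matching $\exp(\sigma(L(c_1)-L(c_3)))$. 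For (2), the word $C_{c_1c_2}C_{c_2c_1}=[t_{c_1},t_{c_2}][t_{c_2},t_{c_1}]=1$ shows the left side acts trivially, hence equals $0$. For (6), (7), (8), the key mapping class fact is the conjugation formula $\xi t_c\xi^{-1}=t_{\xi(c)}$, applied with $\xi=\theta(x)$; combined with $\theta(x)(c_i)$ being again a curve of the same type, and with Lemma~\ref{lemm_theta} identifying $\exp(\sigma(\bch(x,\,-,\,-x)))$ with conjugation by $\theta(x)$, each identity follows. For (5), one uses the lantern-type relation from \cite{Pu2008} (F.5) relating $t_{t_{c_3}(c_2)}t_{c_1}^{-1}$ to $C_{c_3c_2}t_{c_2}t_{c_1}^{-1}$, which is again just $t_{t_{c_3}(c_2)}=t_{c_3}t_{c_2}t_{c_3}^{-1}$ rearranged. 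In every case the recipe is: expand the left and right sides under $\exp\circ\sigma$ using Lemma~\ref{lemm_theta}, reduce to a manifestly true relation among Dehn twists, then pull back via Proposition~\ref{prop_map_inj}.

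The main obstacle is not conceptual but bookkeeping: one must check that whenever a curve $\theta(x)(c_i)$ appears it is still of the requisite kind (separating, or part of a bounding pair, or part of a pair with zero algebraic intersection) so that $L(\theta(x)(c_i))$, $C(\theta(x)(c_1),\theta(x)(c_2))$, etc.\ are defined and lie in $\mathcal{L}_{\mathrm{gen}}(\Sigma)$, and one must make sure the hypotheses attached to each (F.k) in \cite{Pu2008} (disjointness, genus, the configuration of the subsurfaces) are exactly the hypotheses stated here. There is also a minor subtlety in (5): $t_{c_3}(c_2)$ need not be simple a priori unless $c_3$ and $c_2$ are in the right position, but the hypothesis that $\{c_3,c_2\}$ has algebraic intersection $0$ together with $c_1,c_3$ disjoint is precisely what Putman arranges, so the curve is as required. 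Once these side conditions are matched against \cite{Pu2008}, the rest is the straightforward calculation alluded to in the statement, and the faithfulness input from Proposition~\ref{prop_map_inj} does all the real work.
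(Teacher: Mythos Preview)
There is a genuine gap. You assert that $x\mapsto\exp(\sigma(x))$ is injective as a group homomorphism $(F^3\widehat{\skein{\Sigma}},\bch)\to\Aut(\widehat{\skein{\Sigma,J}})$, and you try to extract this from Proposition~\ref{prop_map_inj}. But Proposition~\ref{prop_map_inj} only gives injectivity of $\mathcal{M}(\Sigma)\to\Aut$, not of $F^3\widehat{\skein{\Sigma}}\to\Aut$; the two statements are not interchangeable. In fact the latter map is \emph{not} injective: every central scalar, e.g.\ $(A+1)^2\in(\ker\epsilon)^2=F^4\subset F^3$, satisfies $\sigma((A+1)^2)=0$ and hence $\exp(\sigma((A+1)^2))=\id$. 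This is exactly why the paper introduces Lemma~\ref{lemm_relation_key} and then performs an explicit scalar computation for the lantern and crossed-lantern relations: knowing that an element of $\widehat{\skein{\Sigma}}$ acts trivially on every $\widehat{\skein{\Sigma,J}}$ only forces it into $\Q[[A+1]][\emptyset]$, not to $0$. Your argument, as written, shows at best that $\bch(u_k,-v_k)$ acts trivially, hence lies in $\Q[[A+1]][\emptyset]$; a further step is still owed.

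The paper's route avoids this issue entirely and is much more elementary: each of (1)--(8) is a direct BCH manipulation, using only that disjoint curves give commuting elements of $\skein{\Sigma}$ (so $\bch$ collapses to a sum), the formal identity $\bch(a,b,-a)=\exp(\sigma(a))(b)$ combined with Theorem~\ref{thm_Dehn_twist} (giving $\bch(L(c_3),L(c_2),-L(c_3))=L(t_{c_3}(c_2))$ and $\bch(x,\cdot,-x)=\theta(x)(\cdot)$), and purely formal cancellations such as $\bch(\bch(a,b,-a,-b),\bch(b,a,-b,-a))=0$. For example, (3) and (4) hold because the curves are disjoint, so the $L$'s commute and $\bch$ is addition; (5) unwinds as $\bch(L(c_3),L(c_2),-L(c_3),-L(c_2),L(c_2),-L(c_1))=\bch(L(t_{c_3}(c_2)),-L(c_1))=L(t_{c_3}(c_2))-L(c_1)$ using disjointness of $c_1$ from both $c_2$ and $c_3$; and (6)--(8) follow since $\theta(x)$ acts on $\widehat{\skein{\Sigma}}$ as an algebra automorphism commuting with $\bch$ and sending $L(c)\mapsto L(\theta(x)(c))$. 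No faithfulness is invoked for (1)--(8); the machinery of Lemma~\ref{lemm_relation_key} is reserved precisely for the lantern-type relations where such a direct calculation is unavailable.
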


In order to check lantern relation
in \cite{Pu2008}, we need the following.

\begin{figure}
\begin{picture}(300,450)
\put(0,300){\includegraphics[width=310pt]{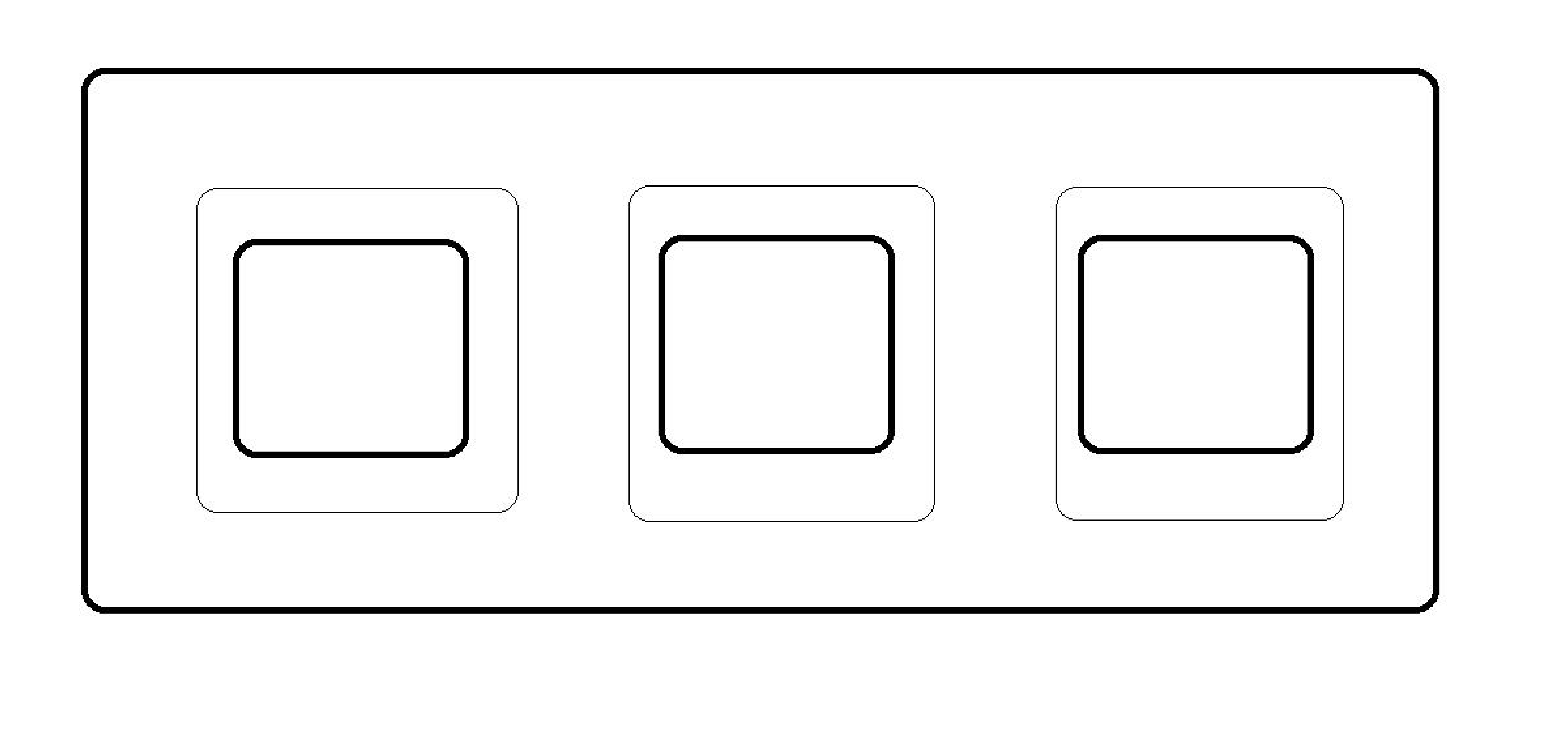}}
\put(0,150){\includegraphics[width=310pt]{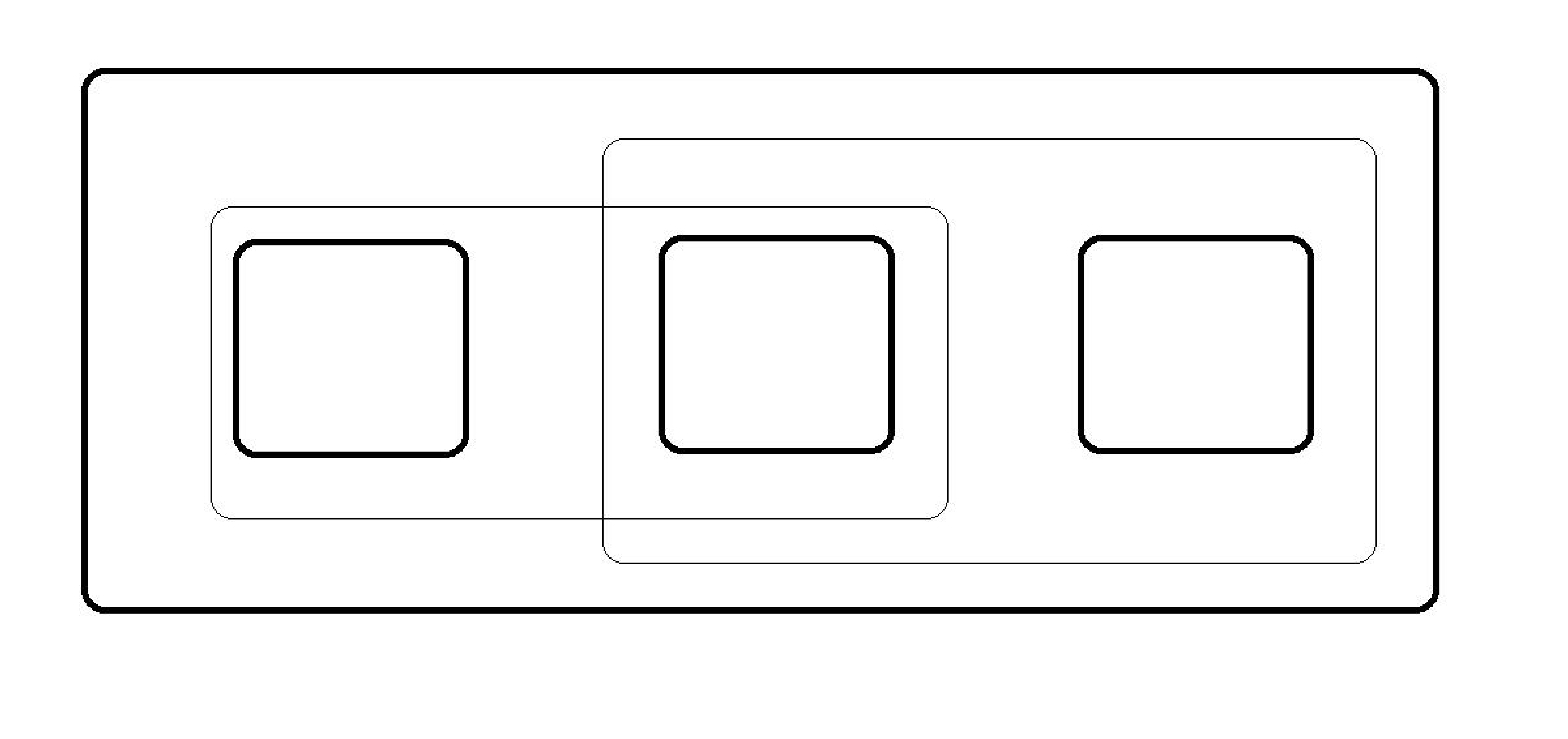}}
\put(0,0){\includegraphics[width=310pt]{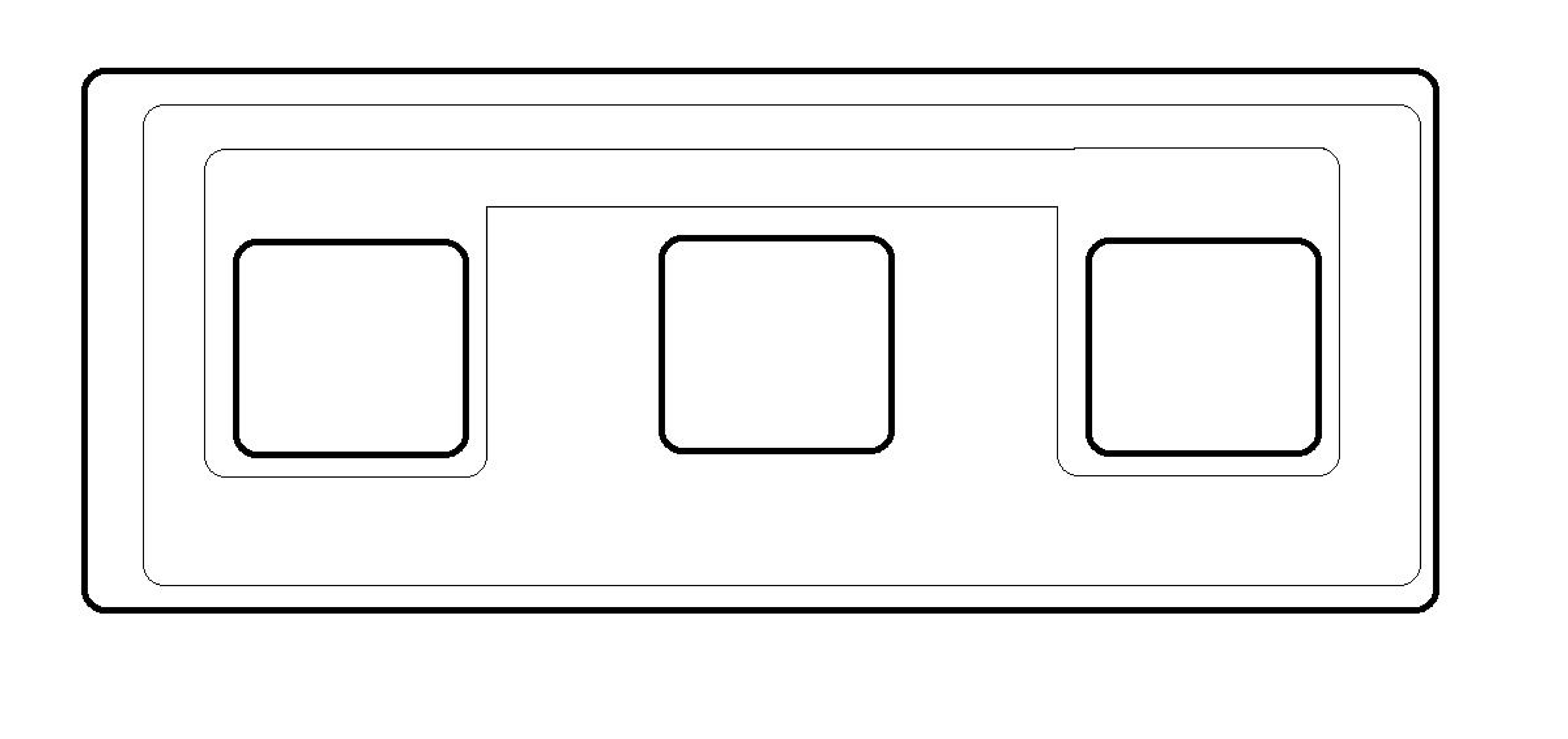}}
\put(70,337){$c_1$}
\put(150,337){$c_2$}
\put(229,337){$c_3$}
\put(70,187){$c_{12}$}
\put(229,187){$c_{23}$}
\put(150,35){$c_{123}$}
\put(150,110){$c_{13}$}
\end{picture}
\caption{The simple closed curves in $\Sigma_{04}$}
\label{figure_Sigma_0_4}
\end{figure}

\begin{lemm}
Let $\Sigma_{0,4}$ be a compact connected oriented surface
of genus $0$ and $4$ boundary components and
$c_1,c_2,c_3,c_{12},c_{23},c_{13}$ and $c_{123}$
the simple closed curves as Figure \ref{figure_Sigma_0_4}.
We have $\bch(L(c_{123},-L(c_{12}),-L(c_{23}),-L(c_{13}),L(c_1),L(c_2),L(c_3)))=0\in
\widehat{\skein{\Sigma_{0,4}}}$.
\end{lemm}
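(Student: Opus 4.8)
The plan is to verify this lantern relation by reducing it, via the embedding $\exp\circ\sigma$ and Proposition~\ref{prop_map_inj}, to the classical lantern relation among Dehn twists on $\Sigma_{0,4}$, and then to promote that equality of automorphisms back to an equality in $\widehat{\skein{\Sigma_{0,4}}}$ itself. Concretely, first I would check that all seven curves $c_1,c_2,c_3,c_{12},c_{23},c_{13},c_{123}$ are separating in $\Sigma_{0,4}$, so that each $L(c)$ lies in $F^3\widehat{\skein{\Sigma_{0,4}}}$ by the Corollary following Lemma~\ref{lemm_bounding_pair}; hence, by the discussion in subsection~\ref{subsection_bch} (the genus-$0$ case, where all of $\widehat{\skein{\Sigma_{0,4}}}$ satisfies the $\bch$-condition), the iterated $\bch$ on the left-hand side is well-defined and the element
\begin{equation*}
w \defeq \bch(L(c_{123}),-L(c_{12}),-L(c_{23}),-L(c_{13}),L(c_1),L(c_2),L(c_3))
\end{equation*}
makes sense in $F^3\widehat{\skein{\Sigma_{0,4}}}$.

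Next I would apply $\exp\circ\sigma$ and use the homomorphism property from subsection~\ref{subsection_bch} together with Theorem~\ref{thm_Dehn_twist}: for every finite $J\subset\partial\Sigma_{0,4}$,
\begin{equation*}
\exp(\sigma(w))(\cdot) = t_{c_{123}} t_{c_{12}}^{-1} t_{c_{23}}^{-1} t_{c_{13}}^{-1} t_{c_1} t_{c_2} t_{c_3}(\cdot) \in \Aut(\widehat{\skein{\Sigma_{0,4},J}}).
\end{equation*}
The classical lantern relation on $\Sigma_{0,4}$ says precisely $t_{c_{123}} = t_{c_{12}} t_{c_{23}} t_{c_{13}} t_{c_1}^{-1} t_{c_2}^{-1} t_{c_3}^{-1}$ for a suitable labelling of the boundary-parallel and interior curves in Figure~\ref{figure_Sigma_0_4} (the three $c_i$ being boundary-parallel, $c_{123}$ the fourth boundary curve, and $c_{12},c_{23},c_{13}$ the three interior curves), using that the four boundary twists commute with everything. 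Rearranging, the product above is the identity automorphism for every $J$, so $\exp(\sigma(w)) = \id$. Then I would note that $w\in F^3\widehat{\skein{\Sigma_{0,4}}}$ satisfies the hypothesis of Lemma~\ref{lemm_relation_key}: $\sigma(w)$ acts trivially on every $\widehat{\skein{\Sigma_{0,4},J}}$ because $\exp(\sigma(w))=\id$ and $\sigma(w)$ raises filtration, so $\sigma(w)=\log(\exp(\sigma(w)))=0$ on each module. Part~(2) of Lemma~\ref{lemm_relation_key} forces $w\in\Q[[A+1]][\emptyset]$; but $w\in\ker\epsilon$ since $F^3\subset\ker\epsilon$, and a nonzero element of $\Q[[A+1]][\emptyset]$ with zero augmentation is still a power series in $A+1$ with no constant term, which does not lie in $F^3$ unless it is $0$ — more directly, $\epsilon(w)=0$ together with $w\in\Q[[A+1]][\emptyset]$ and the fact that $\sigma(w)=0$ on $\widehat{\skein{\Sigma_{0,4},J}}$ pins $w$ down. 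Actually the cleanest finish is: $\exp(\sigma(w))=\id$ gives $\sigma(w)=0$ identically, so by the injectivity-type argument in Lemma~\ref{lemm_relation_key}(2) (the embedding $\Sigma_{0,4}\times I\to I^3\to\Sigma_{0,4}\times I$ collapsing everything), $w=e''(w)$ lies in the central line $\Q[[A+1]][\emptyset]$, and then comparing with $\epsilon(w)=0$ and the explicit form of $L(c)$ modulo $F^4$ (each $L(c)\equiv 0 \bmod (\ker\epsilon)^2$ for separating $c$, so $w\equiv 0\bmod F^4$) shows the $\Q[[A+1]]$-component vanishes, i.e. $w=0$.

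The main obstacle I expect is the bookkeeping in the second-to-last step: namely, confirming that the specific configuration of curves drawn in Figure~\ref{figure_Sigma_0_4} realizes the lantern relation in exactly the bracketed order $\bch(L(c_{123}),-L(c_{12}),-L(c_{23}),-L(c_{13}),L(c_1),L(c_2),L(c_3))$ — in particular getting the signs and the left-to-right ordering of the Dehn twists to match, since $\bch$ composes automorphisms left-to-right via $\exp(\sigma(\bch(a,b)))=\exp(\sigma(a))\exp(\sigma(b))$ while the classical relation is usually stated with a fixed orientation convention. One must also double-check that $c_{123}$ is genuinely a \emph{separating} curve in $\Sigma_{0,4}$ (it bounds on one side together with a boundary component), so that $L(c_{123})$ is a legitimate element of $\mathcal{L}_{\mathrm{sep}}(\Sigma_{0,4})$ and the $\bch$ is defined; and that the relevant subsurface inclusions let us transport everything into a genus-$0$ surface where subsection~\ref{subsection_bch} guarantees $\exp:(\widehat{\skein{\Sigma_{0,4}}},\bch)\to\Aut(\widehat{\skein{\Sigma_{0,4},J}})$ is a homomorphism. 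Once the ordering and sign conventions are fixed, everything else is a direct application of Theorem~\ref{thm_Dehn_twist}, the homomorphism property of $\exp\circ\sigma$, Proposition~\ref{prop_map_inj}, and Lemma~\ref{lemm_relation_key}.
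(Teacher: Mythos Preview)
Your outline matches the paper through the application of Lemma~\ref{lemm_relation_key}: you correctly reduce to showing that the image $e''(w)$ of
\[
w \defeq \bch(L(c_{123}),-L(c_{12}),-L(c_{23}),-L(c_{13}),L(c_1),L(c_2),L(c_3))
\]
under an embedding $\Sigma_{0,4}\times I \to I^3$ vanishes, and you correctly identify that $w$ must lie in the central line $\Q[[A+1]][\emptyset]$.

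The gap is in your final step. Your filtration argument does not close: the intersection $\Q[[A+1]][\emptyset]\cap F^4\widehat{\skein{\Sigma_{0,4}}}$ is \emph{not} zero. Indeed $(A+1)^2[\emptyset]\in(\ker\epsilon)^2=F^4$, so knowing $w\in F^4$ and $w\in\Q[[A+1]][\emptyset]$ (and $\epsilon(w)=0$) does not force $w=0$. The same objection kills your earlier attempt (``a nonzero element of $\Q[[A+1]][\emptyset]$ with zero augmentation \ldots\ does not lie in $F^3$ unless it is $0$''): any $(A+1)^k[\emptyset]$ with $k\ge 2$ already lies in $F^4\subset F^3$.

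What the paper does instead is simply \emph{compute} $e''(w)$. Two observations make this immediate. First, $\skein{I^3}$ is commutative, so $e''([\,\cdot\,,\,\cdot\,])=0$ and hence $e''$ collapses the Baker--Campbell--Hausdorff series to an honest sum:
\[
e''(w)=e''(L(c_{123}))-\sum e''(L(c_{ij}))+\sum e''(L(c_i)).
\]
Second, every curve becomes the unknot in $I^3$, with value $-A^2-A^{-2}$, and plugging this into the definition~(\ref{equation_L_c}) gives
\[
\frac{-A+A^{-1}}{4\log(-A)}\bigl(\arccosh\tfrac{A^2+A^{-2}}{2}\bigr)^2-(-A+A^{-1})\log(-A)=0,
\]
since $\arccosh\tfrac{A^2+A^{-2}}{2}=2\log(-A)$. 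Thus each summand is zero and $e''(w)=0$. You need both of these concrete facts; neither is supplied by the filtration bookkeeping you attempted.
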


\begin{proof}
Let $e$ be an embedding $\Sigma_{0,4} \times I \to I^3$ as 
Figure \ref{figure_Sigma_0_4}.
Since 
\begin{align*}
&\exp (\bch(L(c_{123},-L(c_{12}),-L(c_{23}),-L(c_{13}),L(c_1),L(c_2),L(c_3))))(\cdot)
= \\
&t_{c_{123}}{t_{c_{12}}}^{-1}{t_{c_{23}}}^{-1}{t_{c_{31}}}^{-1}t_{c_{1}}t_{c_{2}}t_{c_3}(\cdot) = \id(\cdot)
\in \Aut (\widehat{\skein{\Sigma_{0.4},J}})
\end{align*}
for all finite set $J \subset \partial \Sigma$, we have
\begin{equation*}
\sigma(\bch(L(c_{123},-L(c_{12}),-L(c_{23}),-L(c_{13}),L(c_1),L(c_2),L(c_3))))
(\widehat{\skein{\Sigma_{0,4},J}}) = \shuugou{0}.
\end{equation*}
By Lemma \ref{lemm_relation_key},
it is enough to show $e(\bch(L(c_{123},-L(c_{12}),-L(c_{23}),-L(c_{13}),L(c_1),L(c_2),L(c_3))))
=0$. We remark $e([\widehat{\skein{\Sigma_{0,4}}}, \widehat{\skein{\Sigma_{0,4}}}])
=\shuugou{0}$. We have
\begin{align*}
&e(\bch(L(c_{123},-L(c_{12}),-L(c_{23}),-L(c_{13}),L(c_1),L(c_2),L(c_3)))) \\
&=(1-3+3)(\frac{-A+\gyaku{A}}{4\log(-A)} (\arccosh (\frac{A^2+A^{-2}}{2}))^2-
(-A+\gyaku{A})\log (-A))=0.
\end{align*}
This finishes the proof.

\end{proof}

We check the crossed lantern relation in \cite{Pu2008}.
Let $\Sigma_{1,2}$ be a connected compact surface
of genus $g=1$ with two boundary components. 
We denote simple closed curves in $\Sigma_{1,2}$
as in Figure \ref{figure_CL}.
We have $H_1(\Sigma_{1,2},\Q) =\Q a\oplus \Q b  \oplus \Q v$
where $a \defeq [c'_a], b \defeq [c_b], v \defeq [c_v] \in H_1 (\Sigma_{1,2},\Q)$.
In order to check the crossed lantern relation, 
we need the following.

\begin{figure}
\begin{picture}(300,140)
\put(0,-160){\includegraphics[width=340pt]{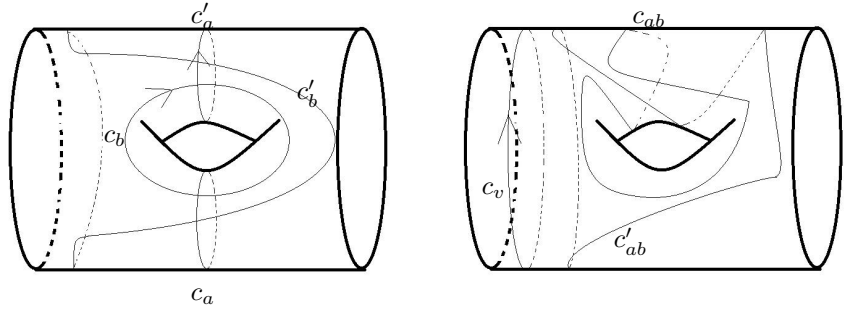}}
\put(47,70){$c_b$}
\put(120,86){$c'_b$}
\put(80,115){$c'_a$}
\put(80,10){$c_a$}
\put(190,50){$c_v$}
\put(240,30){$c'_{ab}$}
\put(247,115){$c_{ab}$}
\end{picture}
\caption{$\Sigma_{1,2}$}

\label{figure_CL}
\end{figure}

\begin{lemm}
\begin{enumerate}
\item The set $ \shuugou{\pm(L(c_a)-L(c'_a))
,\pm (L(c_b)-L(c'_b)), \pm (L(c_{ab})-L(c'_{ab}))}$
satisfies the conditions (\ref{equation_jouken_bch})
and (\ref{equation_bch_jouken_aut}).
\item We have $\bch (L(c_b)-L(c'_b),-L(c_a)+L(c'_a),-L(c_{ab})+L(c'_{ab}))=0$.
\end{enumerate}
\end{lemm}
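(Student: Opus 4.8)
The plan is to treat the crossed lantern relation exactly as the ordinary lantern relation was handled in the preceding lemma: first reduce the statement to a computation in the abelianized skein algebra via Lemma~\ref{lemm_relation_key}, and then perform that computation using the degree-two formula for $L(c)$. For part (1), I would invoke Lemma~\ref{lemm_bch_jouken} with a suitable choice of $V_1 \supset V_2$. Writing $H_1(\Sigma_{1,2},\Q) = \Q a \oplus \Q b \oplus \Q v$, the three bounding-pair differences $L(c_a)-L(c'_a)$, $L(c_b)-L(c'_b)$, $L(c_{ab})-L(c'_{ab})$ are, modulo $F^3$, each supported on the homology class $v$ (since $\{c_a,c'_a\}$, $\{c_b,c'_b\}$, $\{c_{ab},c'_{ab}\}$ are bounding pairs with $[c_a]=[c'_a]$, etc., all equal to $v$ up to sign), so $\rho^{-1}$ of each lands in $v \bullet H$. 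Taking $V_1 = H$ and $V_2 = \Q v$, one needs $\mu(v,\cdot)=0$ on all of $H$; because $v=[c_v]$ is the class of the separating curve $c_v$ (boundary-parallel in the subsurface), $\mu(v,a)=\mu(v,b)=\mu(v,v)=0$, so the hypothesis of Lemma~\ref{lemm_bch_jouken} holds and (1) follows.

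For part (2), since all three BP differences satisfy (\ref{equation_jouken_bch}) and (\ref{equation_bch_jouken_aut}) by part (1), the Baker--Campbell--Hausdorff series $\bch(L(c_b)-L(c'_b),-L(c_a)+L(c'_a),-L(c_{ab})+L(c'_{ab}))$ is defined, and I would first check at the level of automorphisms that it is trivial: by Lemma~\ref{lemm_theta} (or directly Theorem~\ref{thm_Dehn_twist}), $\exp\sigma$ of this element equals the mapping-class product $t_{c_b}t_{c'_b}^{-1}\,t_{c_a}^{-1}t_{c'_a}\,t_{c_{ab}}^{-1}t_{c'_{ab}}$, which is the identity in $\mathcal{M}(\Sigma_{1,2})$ by the crossed lantern relation of \cite{Pu2008}. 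Hence $\sigma(\bch(\dots))(\widehat{\skein{\Sigma_{1,2},J}})=\{0\}$ for every finite $J\subset\partial\Sigma_{1,2}$, and Lemma~\ref{lemm_relation_key} applies. Choosing an embedding $e:\Sigma_{1,2}\times I\to I^3$ (so that $e$ kills all brackets, $e([\widehat{\skein{\Sigma_{1,2}}},\widehat{\skein{\Sigma_{1,2}}}])=\{0\}$), it remains to show $e(\bch(\dots))=0$; and since $e$ annihilates Lie brackets, $e(\bch(\dots))$ collapses to the alternating sum $e(L(c_b))-e(L(c'_b))-e(L(c_a))+e(L(c'_a))-e(L(c_{ab}))+e(L(c'_{ab}))$. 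Each $e(L(c))$ evaluates to the same scalar $\frac{-A+\gyaku{A}}{4\log(-A)}(\arccosh(\frac{A^2+A^{-2}}{2}))^2-(-A+\gyaku{A})\log(-A)$ because under $e$ every simple closed curve becomes an unknot (value $-A^2-A^{-2}$), so the alternating sum has coefficient $(1-1-1+1-1+1)=0$ and vanishes.

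The main obstacle I anticipate is verifying the image-homology bookkeeping in part (1) cleanly: one must be sure that the three curve classes $[c_a],[c'_a],[c_b],[c'_b],[c_{ab}],[c'_{ab}]$ are indeed all equal (up to sign) to the single class $v$ with $\mu(v,-)\equiv 0$ — i.e.\ that $c_v$ really is a separating (or boundary-parallel) curve in $\Sigma_{1,2}$ realizing $v$ — so that a \emph{single} pair $(V_1,V_2)$ works for all three differences simultaneously, which is what Lemma~\ref{lemm_bch_jouken} requires for the whole set to satisfy (\ref{equation_jouken_bch}). This is a picture-chasing point on Figure~\ref{figure_CL} rather than a deep one. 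Everything after that is formal: the automorphism-level identity is the crossed lantern relation quoted from \cite{Pu2008}, and the final scalar computation is the same $(1-3+3)=1$-type cancellation (here $(1-1-1+1-1+1)=0$) that appeared in the ordinary lantern case.
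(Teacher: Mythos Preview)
Your overall strategy for both parts matches the paper's, but the justification you give for part~(1) contains a real error. You claim that $[c_a]=[c'_a]=[c_b]=\cdots=\pm v$; this is false. From Figure~\ref{figure_CL} (and the definitions $a=[c'_a]$, $b=[c_b]$) one has, up to sign, $[c'_a]=a$, $[c_a]=a-v$, $[c_b]=b$, $[c'_b]=b-v$, $[c_{ab}]=a+b$, $[c'_{ab}]=a+b-v$. In particular the pairs $\{c_i,c'_i\}$ are \emph{not} homologous in $H_1(\Sigma_{1,2},\Q)$ (so Lemma~\ref{lemm_bounding_pair} does not apply here), and the differences $L(c_i)-L(c'_i)$ are not in $F^3\widehat{\skein{\Sigma_{1,2}}}$. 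If your claim were right each difference would actually vanish modulo $F^3$ and there would be nothing to check. What the paper does instead is compute directly, using $L(c)\equiv -\tfrac14\rho([c]\bullet[c])\bmod F^3$,
\begin{align*}
L(c_a)-L(c'_a)&\equiv\tfrac14\rho(-v\bullet v+2v\bullet a),\\
L(c_b)-L(c'_b)&\equiv\tfrac14\rho(v\bullet v-2v\bullet b),\\
L(c_{ab})-L(c'_{ab})&\equiv\tfrac14\rho(v\bullet v-2v\bullet(a+b)),
\end{align*}
which indeed all lie in $\rho(\Q v\bullet H)$ because within each pair the homology classes differ by $v$. After this correction your choice $V_1=H$, $V_2=\Q v$ and the appeal to Lemma~\ref{lemm_bch_jouken} (using $\mu(v,\cdot)=0$) are exactly right.

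Your argument for part~(2) is correct and essentially identical to the paper's: exponentiate to obtain the crossed lantern identity in $\Aut(\widehat{\skein{\Sigma_{1,2},J}})$, invoke Lemma~\ref{lemm_relation_key}, and evaluate under an embedding $e:\Sigma_{1,2}\times I\to I^3$. The paper phrases the final step slightly more directly, observing that $e(L(c_i)-L(c'_i))=0$ for each $i\in\{a,b,ab\}$ since $c_i$ and $c'_i$ become isotopic unknots in $I^3$, so that every term of the $\bch$ series already vanishes; your alternating-sum computation $(1-1-1+1-1+1)=0$ amounts to the same thing.
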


\begin{proof}[Proof of (1)]
We have 
\begin{align*}
L(c_a)-L(c'_a) &=\frac{1}{4}\rho(-(a-v)\cdot(a-v)+a\cdot a ) \mod F^3 
\widehat{\skein{\Sigma_{0,4}}} \\
 &=\frac{1}{4}\rho(-v\cdot v+2v \cdot a ) \mod F^3 
\widehat{\skein{\Sigma_{0,4}}}, \\
L(c_b)-L(c'_b)&=\frac{1}{4}\rho(-b\cdot b+(b-v)\cdot (b-v)) \mod F^3 
\widehat{\skein{\Sigma_{0,4}}} \\
&=\frac{1}{4}\rho(v\cdot v-2v \cdot b ) \mod F^3 
\widehat{\skein{\Sigma_{0,4}}}, \\
L(c_{ab})-L(c'_{ab}) &=\frac{1}{4}\rho(-(a+b) \cdot (a+b) +(a+b-v)\cdot(a+b-v)) \mod F^3 
\widehat{\skein{\Sigma_{0,4}}} \\
&=\frac{1}{4}\rho(v\cdot v-2v \cdot (a+b) ) \mod F^3 
\widehat{\skein{\Sigma_{0,4}}}. \\
\end{align*}
By Lemma  \ref{lemm_bch_jouken}, this finishes the proof.
\end{proof}

\begin{proof}[Proof of (2)]
Let $e$ be an embedding $\Sigma_{0,4} \times I \to I^3$ as 
Figure \ref{figure_CL}.
Since 
\begin{align*}
&\exp (\sigma(\bch (L(c_b)-L(c'_b),-L(c_a)+L(c'_a),-L(c_{ab})+L(c'_{ab}))))(\cdot) \\
&=t_{c_b c'_b} \circ t_{c'_a c_a}\circ {t_{c_{ab} c'_{ab}}}^{-1}(\cdot) = \id(\cdot)
\in \Aut (\widehat{\skein{\Sigma_{1,2},J}})
\end{align*}
for all finite set $J \subset \partial \Sigma$, we have
\begin{equation*}
\sigma(\bch (L(c_b)-L(c'_b),-L(c_a)+L(c'_a),-L(c_{ab})+L(c'_{ab})))
(\widehat{\skein{\Sigma_{1,2},J}}) = \shuugou{0}.
\end{equation*}
By Lemma \ref{lemm_relation_key},
it is enough to show $e(\bch (L(c_b)-L(c'_b),-L(c_a)+L(c'_a),-L(c_{ab})+L(c'_{ab})))=0$.
Since $e(x (L(c_i)-L(c'_i))) =e((L(c_i)-L(c_i)) x) =0 $ for $i \in \shuugou{a, b,ab}$ and
any $x \in \widehat{\skein{\Sigma_{1,2}}}$, we have 
$e(\bch (L(c_b)-L(c'_b),-L(c_a)+L(c'_a),-L(c_{ab})+L(c'_{ab})))=0$.
This finishes the proof.
\end{proof}

In order to check Witt-Hall relation and commutator shuffle relation in 
\cite{Pu2008},
it is enough to show the following lemma.

\begin{lemm}
Let $\Sigma'$ be a compact surface, $D$ a open disk in $\Sigma'$
and $\Sigma''$ the surface $\Sigma' \backslash D$.
We fix the points $*_1$ and $*_2$ and the paths $\gamma_1$ and $\gamma_2$
as in Figure \ref{figure_WH_CS}.
We denote by $\mathcal{S}_{\mathrm{push}}(\Sigma'', \partial D)$
the set of all pair $\shuugou{c_1,c_2}$ of simple closed curves
satisfying the followings.
\begin{itemize}
\item
There exists a path $\gamma$ from $*_1$ to $*_2$ such that
$c_1 =\gamma_1 \cup \gamma$ and $c_2 =\gamma_2 \cup \gamma$.
\item
We have $[c_1] =[c_2] \in H_1 (\Sigma').$
\end{itemize}
For $\shuugou{c_{11},c_{21}} \cdots \shuugou{c_{1k},c_{2k}} \in 
\mathcal{S}_{\mathrm{push}}(\Sigma'', \partial D)$
and $\epsilon_1 \cdots \epsilon_k \in \shuugou{\pm1}$,
if $ \prod_{i=1}^k (t_{c_{1i}c_{2i}})^{\epsilon_i} = \id \in \mathcal{I}(\Sigma'')$,
then we have
\begin{equation*}
\bch(\epsilon_1 (L(c_{11})-L(c_{21})), \cdots, \epsilon_k(L(c_{1k})-L(c_{2k}))) =0.
\end{equation*}
\end{lemm}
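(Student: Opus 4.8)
The plan is to follow the same template used for the lantern and crossed-lantern relations above. First I would exploit the hypothesis $\prod_{i=1}^k (t_{c_{1i}c_{2i}})^{\epsilon_i} = \id \in \mathcal{I}(\Sigma'')$. By Lemma \ref{lemm_theta} (applied with the surface $\Sigma''$), composing the corresponding automorphisms gives
\begin{equation*}
\exp\bigl(\sigma(\bch(\epsilon_1 (L(c_{11})-L(c_{21})), \cdots, \epsilon_k(L(c_{1k})-L(c_{2k}))))\bigr)(\cdot) = \id(\cdot) \in \Aut(\widehat{\skein{\Sigma'',J}})
\end{equation*}
for every finite $J \subset \partial \Sigma''$, hence
\begin{equation*}
\sigma\bigl(\bch(\epsilon_1 (L(c_{11})-L(c_{21})), \cdots, \epsilon_k(L(c_{1k})-L(c_{2k})))\bigr)(\widehat{\skein{\Sigma'',J}}) = \shuugou{0}
\end{equation*}
for all such $J$. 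Write $w \defeq \bch(\epsilon_1 (L(c_{11})-L(c_{21})), \cdots, \epsilon_k(L(c_{1k})-L(c_{2k}))) \in \widehat{\skein{\Sigma''}}$; this is the element we must show vanishes.

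The key structural point is that each generator $L(c_{1i})-L(c_{2i})$ lies in the image of the skein algebra of a small sub-surface sitting inside a box. Concretely, since $\shuugou{c_{1i},c_{2i}} \in \mathcal{S}_{\mathrm{push}}(\Sigma'',\partial D)$, both curves are built from fixed arcs $\gamma_1,\gamma_2$ together with a common path $\gamma$, so their difference $c_{1i}-c_{2i}$ (and hence $L(c_{1i})-L(c_{2i})$, which modulo higher filtration is $-\tfrac12(c_{1i}-c_{2i})$, but in fact exactly as computed by the $\arccosh$ formula) is supported in a neighbourhood of $\gamma_1\cup\gamma_2\cup\gamma$ that can be pushed into a cube $I^3 \subset \Sigma'' \times I$ via an embedding $e$ of the type appearing in Lemma \ref{lemm_relation_key}(2). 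Under such an $e$ the image is commutative — $e([\widehat{\skein{\cdot}},\widehat{\skein{\cdot}}]) = \shuugou{0}$ — so the Baker-Campbell-Hausdorff series collapses to the plain sum:
\begin{equation*}
e(w) = \sum_{i=1}^k \epsilon_i \bigl(e(L(c_{1i})) - e(L(c_{2i}))\bigr).
\end{equation*}
Since $e(c_{1i}) = e(c_{2i})$ (both curves coincide after being pushed into the box, being homologous push-curves around the same disk — this is exactly the reason the pair is "push"), each summand $e(L(c_{1i})) - e(L(c_{2i})) = 0$, whence $e(w) = 0$.

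Finally, combining the two halves: $w$ satisfies the hypothesis of Lemma \ref{lemm_relation_key} because $\sigma(w)$ annihilates every $\widehat{\skein{\Sigma'',J}}$, so part (2) of that lemma forces $w = e''(w) \in \Q[[A+1]][\emptyset]$ for a suitable box embedding $e''$; taking $e'' = e$ (or composing with it) and using $e(w)=0$ gives $w = 0$. This finishes the proof. The main obstacle is the geometric bookkeeping in the middle step: one must choose the embedding $e$ carefully so that \emph{all} the curves $c_{1i},c_{2i}$ simultaneously push into one box and become pairwise-coinciding there, using that they share the arcs $\gamma_1,\gamma_2$ and the homology condition $[c_{1i}]=[c_{2i}]$; once that picture is set up, the algebra is immediate from commutativity of the box sub-algebra, exactly as in the lantern and crossed-lantern cases.
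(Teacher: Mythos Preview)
Your template is correct and matches the paper's: establish $\sigma(w)(\widehat{\skein{\Sigma'',J}})=\{0\}$ for all $J$, then invoke Lemma \ref{lemm_relation_key}(2) to reduce to computing the image of $w$ under a box embedding. The difference lies entirely in the middle step, which you yourself flag as ``the main obstacle'' and do not actually resolve.

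You propose to find an embedding $e:\Sigma''\times I\to I^3$ under which all the pairs $c_{1i},c_{2i}$ simultaneously coincide, then use commutativity of $\skein{I^3}$ to collapse $\bch$ to a sum of vanishing terms. But you never say what $e$ is or why $e(c_{1i})=e(c_{2i})$; the phrase ``homologous push-curves around the same disk'' is intuition, not an argument, and the different paths $\gamma$ for different $i$ range over the whole surface, so the picture of a single small box containing everything is misleading. The paper avoids this difficulty by inserting one extra map before going to the cube: the inclusion $e':\Sigma''\hookrightarrow\Sigma'$ that fills the disk $D$ back in. In $\Sigma'$ the arcs $\gamma_1,\gamma_2$ are isotopic rel endpoints across $D$, so $c_{1i}=\gamma_1\cup\gamma$ and $c_{2i}=\gamma_2\cup\gamma$ are isotopic as curves in $\Sigma'$; hence $e'(L(c_{1i})-L(c_{2i}))=0$ in $\widehat{\skein{\Sigma'}}$ for every $i$. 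Since $e'$ is a filtered algebra homomorphism it commutes with $\bch$, giving $e'(w)=\bch(0,\dots,0)=0$ immediately. One then composes with any $e:\Sigma'\times I\to I^3$ to feed into Lemma \ref{lemm_relation_key}(2).

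In short, the ``carefully chosen $e$'' you leave unspecified is exactly the composition through $\Sigma'$, and once that choice is made your collapse-to-a-sum manoeuvre via commutativity of $\skein{I^3}$ becomes unnecessary: each generator already vanishes in $\widehat{\skein{\Sigma'}}$, before one ever reaches the cube.
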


\begin{figure}
\begin{picture}(300,140)
\put(0,-160){\includegraphics[width=340pt]{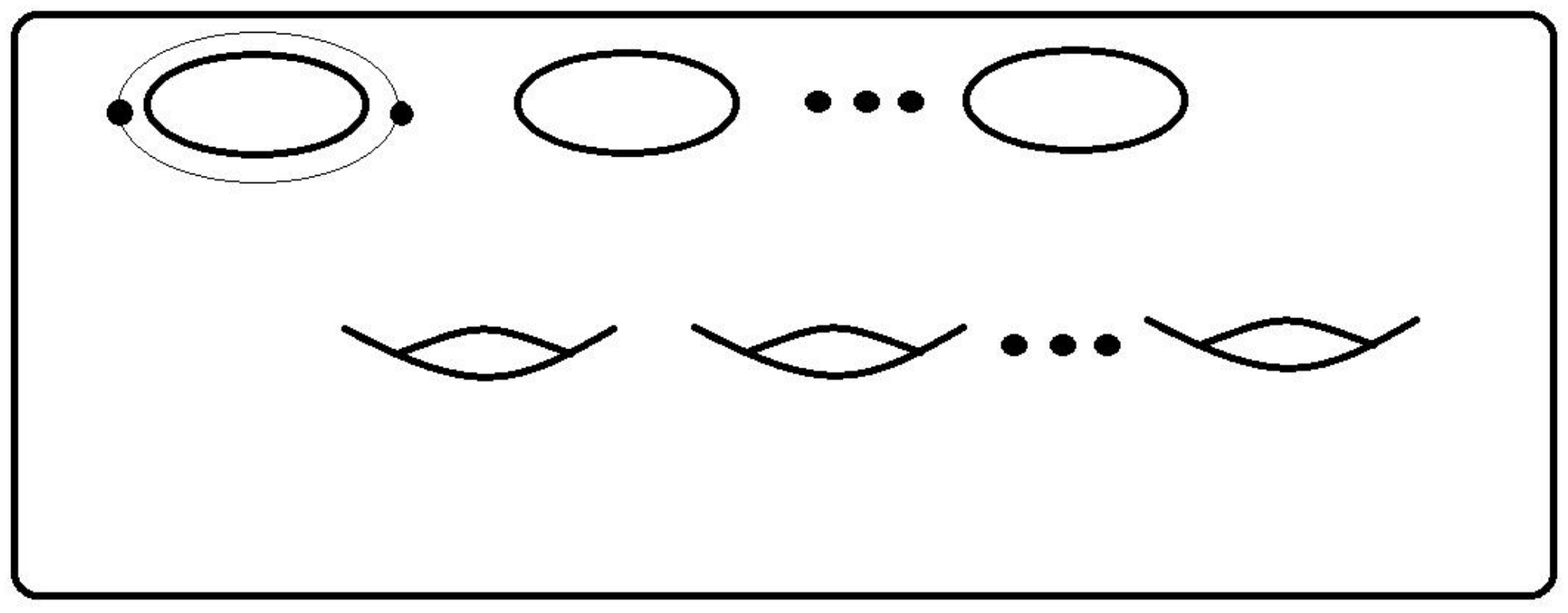}}
\put(60,90){$D$}
\put(100,86){$*_1$}
\put(20,86){$*_2$}
\put(90,100){$\gamma_1$}
\put(55,70){$\gamma_2$}
\end{picture}
\caption{$\Sigma'$}
\label{figure_WH_CS}
\end{figure}

\begin{proof}
Since 
\begin{equation*}
\exp (\sigma(\bch(\epsilon_1 (L(c_{11})-L(c_{21})), \cdots, \epsilon_k(L(c_{1k})-L(c_{2k})))
))(\cdot)
=\prod_{i=1}^k (t_{c_{1i}c_{2i}})^{\epsilon_i}(\cdot) = \id(\cdot)
\in \Aut (\widehat{\skein{\Sigma,J}})
\end{equation*}
for all finite set $J \subset \partial \Sigma$, we have
\begin{equation*}
\sigma(\bch(\epsilon_1 (L(c_{11})-L(c_{21})), \cdots, \epsilon_k(L(c_{1k})-L(c_{2k}))))
(\widehat{\skein{\Sigma,J}}) = \shuugou{0}.
\end{equation*}
We denote by $e'$ the embedding $\Sigma'' \to \Sigma'$.
We choose an embedding $e:\Sigma' \times I \to I^3$.
By Lemma \ref{lemm_relation_key},
it is enough to show  $e\circ e'(\bch(\epsilon_1 (L(c_{11})-L(c_{21})), \cdots,
 \epsilon_k((c_{1k})-L(c_{2k}))))=0$.
Since $e'(L(c_{1i})-L(c_{2i}))$ for $i \in \shuugou{1,2, \cdots,k}$,
we have $e'(\bch(\epsilon_1 (L(c_{11})-L(c_{21})), \cdots,
 \epsilon_k(L(c_{1k})-L(c_{2k}))))=0$.
This finishes the proof.
\end{proof}

By above lemmas, we have the main theorem.

\begin{thm}
The group homomorphism $\theta :
I \skein{\Sigma} \to  \mathcal{I}(\Sigma)$ is
an isomorphism. In other words,
$\zeta \defeq \theta^{-1} :
\mathcal{I} (\Sigma) \to \widehat{\skein{\Sigma}}$ is embedding.

\end{thm}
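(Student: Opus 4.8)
The plan is to prove that $\theta: I\skein{\Sigma} \to \mathcal{I}(\Sigma)$ is an isomorphism by combining two facts: $\theta$ is surjective essentially by construction, and $\theta$ is injective by virtue of Putman's infinite presentation of $\mathcal{I}(\Sigma)$ together with all the relation-checking lemmas assembled above. Surjectivity is immediate: the generating set $\mathcal{I}_{\mathrm{gen}}(\Sigma)$ of $\mathcal{I}(\Sigma)$ is exactly the image of $\mathcal{L}_{\mathrm{gen}}(\Sigma)$ under $\theta$, and since $I\skein{\Sigma}$ is generated (as a group under $\bch$) by $\mathcal{L}_{\mathrm{gen}}(\Sigma)$, the image $\theta(I\skein{\Sigma})$ contains all generators of $\mathcal{I}(\Sigma)$, hence equals $\mathcal{I}(\Sigma)$. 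So the whole content is injectivity.

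For injectivity, the approach is the standard one for maps out of a group given by an explicit presentation. Putman's theorem \cite{Pu2008} presents $\mathcal{I}(\Sigma)$ with generating set $\mathcal{I}_{\mathrm{gen}}(\Sigma)$ and an explicit (infinite) list of relators, namely the relations (F.1)--(F.8), the lantern relation, the crossed lantern relation, the Witt--Hall relation, and the commutator shuffle relation. To show $\theta^{-1}$ is well-defined — equivalently that $\theta$ is injective — I would exhibit the inverse map by first defining a homomorphism from the free group on $\mathcal{I}_{\mathrm{gen}}(\Sigma)$ to $(I\skein{\Sigma},\bch)$ sending each generator $g \in \mathcal{I}_{\mathrm{gen}}(\Sigma)$ to the corresponding $\theta^{-1}(g) \in \mathcal{L}_{\mathrm{gen}}(\Sigma)$, and then checking that every defining relator of Putman's presentation maps to $0 \in (I\skein{\Sigma},\bch)$. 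The relation checks are precisely the lemmas proved above: Lemma \ref{lemm_relation_F}(k) handles (F.k), and the subsequent lemmas handle the lantern, crossed lantern, Witt--Hall, and commutator shuffle relations. Once these are verified, the universal property of the presentation yields a homomorphism $\zeta: \mathcal{I}(\Sigma) \to (I\skein{\Sigma},\bch)$, and from the definitions $\zeta \circ \theta = \id$ on generators of $I\skein{\Sigma}$ and $\theta \circ \zeta = \id$ on generators of $\mathcal{I}(\Sigma)$, so both compositions are identities and $\theta$ is an isomorphism with inverse $\zeta$.

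There is one subtle point that needs care: the relation-checking lemmas above verify that each Putman relator, when its generators are replaced by the corresponding elements of $\mathcal{L}_{\mathrm{gen}}(\Sigma)$ and combined via $\bch$, evaluates to $0$ in $\widehat{\skein{\Sigma}}$ — but for this to produce a well-defined homomorphism into $(I\skein{\Sigma},\bch)$ one must know that the group operations used in forming the relator (which in Putman's presentation may involve conjugation by arbitrary elements, not just generators) are reflected faithfully by $\bch$-operations on the skein side. This is exactly why Lemma \ref{lemm_relation_F}(6)--(8) are stated with $x \in \mathcal{L}_{\mathrm{gen}}(\Sigma)$ and give $\bch(x, \cdot, -x) = L(\theta(x)(\cdot))$ and its analogues: conjugation in $\mathcal{I}(\Sigma)$ by a generator $\theta(x)$ corresponds to $\bch$-conjugation by $x$, and by iterating, conjugation by an arbitrary word is handled. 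Lemma \ref{lemm_theta} and Lemma \ref{lemm_zeta_inv_inj} provide the key mechanism: if a word in the generators maps under $\bch$ to an element $w \in I\skein{\Sigma}$ with $\exp(\sigma(w)) = \id$ on all $\widehat{\skein{\Sigma,J}}$, then Proposition \ref{prop_map_inj} (injectivity of $\mathcal{M}(\Sigma) \to \Aut(\widehat{\skein{\Sigma,J}})$) combined with the fact that the corresponding word in $\mathcal{I}(\Sigma)$ is trivial forces $w = 0$.

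The main obstacle is organizational rather than conceptual: one must match Putman's presentation relator-by-relator against the list of lemmas, being careful that the curves appearing in each relator satisfy the hypotheses (algebraic intersection zero, bounding pair, separating, disjointness conditions, the $\mathcal{S}_{\mathrm{push}}$ condition, etc.) under which the corresponding skein identity was proved, and that the stabilization/embedding arguments of Lemma \ref{lemm_relation_key} correctly transport identities proved in a small subsurface $\Sigma_{0,4}$ or $\Sigma_{1,2}$ up to the ambient surface $\Sigma$. The genus hypothesis $g > 2$ enters exactly here, since Putman's presentation requires it. Granting that the lemmas above are correctly aligned with Putman's relators, the theorem follows formally: $\zeta := \theta^{-1}$ exists as a group homomorphism $\mathcal{I}(\Sigma) \to (I\skein{\Sigma},\bch) \hookrightarrow \widehat{\skein{\Sigma}}$, giving the desired embedding.
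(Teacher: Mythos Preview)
Your proposal is correct and follows essentially the same approach as the paper: surjectivity is immediate from $\theta(\mathcal{L}_{\mathrm{gen}}(\Sigma)) = \mathcal{I}_{\mathrm{gen}}(\Sigma)$, and injectivity is obtained by constructing the inverse $\zeta$ via Putman's presentation, with the preceding lemmas (Lemma \ref{lemm_relation_F} for (F.1)--(F.8), and the subsequent lemmas for lantern, crossed lantern, Witt--Hall, and commutator shuffle) verifying that every relator maps to $0$ under $\bch$. The paper's own proof is simply the line ``By above lemmas, we have the main theorem,'' which is exactly the argument you spell out.
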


Since $\mathcal{I} (\Sigma) $ is generated by
$\shuugou{t_{c_1c_2}|c_1,c_2: \mathrm{BP}}$, 
we have the following.

\begin{cor}
We have $I \skein{\Sigma} =\zettaiti{\mathcal{L}_{bp}(\Sigma)}$.
Furthermore $\zeta$ can be defined by
$\zeta(t_{c_1c_2})=L(c_1)-L(c_2)$.
\end{cor}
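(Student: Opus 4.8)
The plan is to obtain both assertions as formal consequences of the preceding theorem, that $\theta : I\skein{\Sigma} \to \mathcal{I}(\Sigma)$ is an isomorphism, together with the fact recalled just above that $\mathcal{I}(\Sigma)$ is generated by the bounding-pair maps $t_{c_1c_2}$ (Johnson's theorem); the latter is legitimate precisely because the genus of $\Sigma$ is assumed to be larger than $2$ throughout this section.

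First I would set $I' \defeq \zettaiti{\mathcal{L}_{\mathrm{bp}}(\Sigma)}$, the subgroup of $(F^3\widehat{\skein{\Sigma}},\bch)$ generated by $\mathcal{L}_{\mathrm{bp}}(\Sigma)$. This is a well-defined group because $\mathcal{L}_{\mathrm{bp}}(\Sigma) \subset F^3\widehat{\skein{\Sigma}}$ and $F^3\widehat{\skein{\Sigma}}$ satisfies conditions (\ref{equation_jouken_bch}) and (\ref{equation_bch_jouken_aut}), as noted in Subsection \ref{subsection_bch}. Since $\mathcal{L}_{\mathrm{bp}}(\Sigma) \subset \mathcal{L}_{\mathrm{gen}}(\Sigma)$, the inclusion $I' \subset I\skein{\Sigma}$ is immediate. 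For the reverse inclusion I would push forward along $\theta$: as $\theta$ is a group homomorphism (Lemma \ref{lemm_zeta_inv_inj}), $\theta(I')$ is the subgroup of $\mathcal{I}(\Sigma)$ generated by $\theta(\mathcal{L}_{\mathrm{bp}}(\Sigma))$, and by the definition of $\theta$ on bounding-pair generators this image equals $\mathcal{I}_{\mathrm{bp}}(\Sigma) = \shuugou{t_{c_1c_2} \mid \shuugou{c_1,c_2}: \mathrm{BP}}$. By the generation statement this set generates $\mathcal{I}(\Sigma)$, so $\theta(I') = \mathcal{I}(\Sigma) = \theta(I\skein{\Sigma})$, and injectivity of $\theta$ forces $I' = I\skein{\Sigma}$, which is the first assertion.

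For the second assertion, recall $\zeta = \theta^{-1}$. The definition of $\theta$ gives $\theta(L(c_1)-L(c_2)) = t_{c_1c_2}$ for every bounding pair $\shuugou{c_1,c_2}$, hence $\zeta(t_{c_1c_2}) = L(c_1)-L(c_2)$; since $\zeta$ is a group homomorphism and $\mathcal{I}(\Sigma)$ is generated by the $t_{c_1c_2}$, these values determine $\zeta$ on all of $\mathcal{I}(\Sigma)$, which is the content of the statement that $\zeta$ can be defined by $\zeta(t_{c_1c_2}) = L(c_1)-L(c_2)$.

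I do not expect a real obstacle here: once the isomorphism $\theta$ is in hand, the whole statement is a manipulation of generating sets transported across $\theta$. The only two points that need a word of justification are that $I'$ is genuinely a subgroup (so that $\theta(I')$ makes sense), which is immediate from the conditions recorded in Subsection \ref{subsection_bch}, and the invocation of the bounding-pair generation of $\mathcal{I}(\Sigma)$, which is exactly where the hypothesis $g>2$ is used.
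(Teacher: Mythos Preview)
Your proposal is correct and matches the paper's approach exactly: the paper gives no separate proof but simply records the sentence ``Since $\mathcal{I}(\Sigma)$ is generated by $\{t_{c_1c_2}\mid c_1,c_2:\mathrm{BP}\}$, we have the following'' and leaves the transport-along-$\theta$ argument to the reader, which is precisely what you have spelled out.
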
 

\subsection{The 1st Johnson homomorphism and $\zeta$}
\label{subsection_johnson_zeta}
In this subsection, we prove the following.

\begin{thm}
\label{thm_johnson_zeta}
The isomorphism
$\zeta :\mathcal{I} (\Sigma) \to I \skein{\Sigma}$ 
induces $\tau_\zeta :\mathcal{I} (\Sigma) \to
F^3 \widehat{\skein{\Sigma}}/
F^4 \widehat{\skein{\Sigma}} \stackrel{\lambda^{-1}}{\simeq}
H_1 \wedge H_1 \wedge H_1$. Then we have $\tau_\zeta =\tau$
where $\tau$ is the 1st Johnson homomorphism.
\end{thm}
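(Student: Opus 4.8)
The plan is to identify both $\tau_\zeta$ and the first Johnson homomorphism $\tau$ on a common generating set of $\mathcal{I}(\Sigma)$ and check they agree there. By the Corollary just proved, $\mathcal{I}(\Sigma)$ is generated by the bounding-pair maps $t_{c_1 c_2}$, and $\zeta$ sends $t_{c_1 c_2}$ to $L(c_1)-L(c_2)$. So it suffices to compute $\tau_\zeta(t_{c_1 c_2}) = \lambda^{-1}\bigl(L(c_1)-L(c_2) \bmod F^4\widehat{\skein{\Sigma}}\bigr)$ and compare with the classical value $\tau(t_{c_1 c_2})$.

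The key computational input is already in hand: Lemma~\ref{lemm_bounding_pair} states that if the bounding pair $\shuugou{c_1,c_2}$ is presented by $(r[a_1,b_1]\cdots[a_m,b_m])_\square$ and $(r)_\square$, then
\begin{equation*}
L(c_1)-L(c_2) = -\sum_{i=1}^m \lambda([r]\wedge[a_i]\wedge[b_i]) \bmod F^4\widehat{\skein{\Sigma}},
\end{equation*}
so that $\tau_\zeta(t_{c_1 c_2}) = -\sum_{i=1}^m [r]\wedge[a_i]\wedge[b_i] \in H_1\wedge H_1\wedge H_1$. The second step is to recall the standard formula for the first Johnson homomorphism on a bounding pair: writing $h = [c_1]=[c_2] \in H_1$ for the common homology class of the pair (which equals $[r]$ in this presentation, since the commutators $[a_i,b_i]$ are null-homologous) and letting the genus of the subsurface they cobound contribute $\sum_{i=1}^m [a_i]\wedge[b_i]$, one has $\tau(t_{c_1 c_2}) = h \wedge \bigl(\sum_{i=1}^m [a_i]\wedge[b_i]\bigr)$ up to the usual sign convention. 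Matching the two expressions term by term then gives $\tau_\zeta = \pm\tau$, and fixing orientation/sign conventions consistently (the same ones used to normalize $\lambda$ and $L(c)$) yields $\tau_\zeta = \tau$.

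Before the comparison one must check that $\tau_\zeta$ is actually well-defined as a homomorphism, i.e.\ that the composite $\mathcal{I}(\Sigma) \xrightarrow{\zeta} I\skein{\Sigma} \hookrightarrow F^3\widehat{\skein{\Sigma}} \twoheadrightarrow F^3/F^4 \xrightarrow{\lambda^{-1}} H_1^{\wedge 3}$ is a group homomorphism. This is immediate from the structure already set up: $\zeta$ is a group isomorphism to $(I\skein{\Sigma},\bch)$, and the quotient $F^3\widehat{\skein{\Sigma}}/F^4\widehat{\skein{\Sigma}}$ kills all brackets $[F^3,F^3]\subset F^4$ (by the bracket estimate $[F^m,F^n]\subset F^{n+m-2}$), so $\bch$ descends to ordinary addition there; hence the composite is additive. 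Also note $\lambda$ is injective (cited as Corollary~4.6), so $\lambda^{-1}$ makes sense on the image.

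The main obstacle I anticipate is purely bookkeeping of sign and normalization conventions: the factor $-\tfrac{1}{2}$ relating $L(c)$ to $c+2$, the normalization of $\lambda$ via $\kukakko{(a-1)(b-1)(c-1)}$, and the orientation convention implicit in "bounding pair" all have to be reconciled with whatever normalization of $\tau$ is being used (e.g.\ that of Johnson, or of \cite{KK}). I would handle this by verifying the formula on one explicit genuine bounding pair in a small-genus subsurface — say with $m=1$, $r,a_1,b_1$ part of a standard symplectic basis — where both sides can be written down by hand, pinning down the sign once and for all, and then invoking Lemma~\ref{lemm_bounding_pair} for the general case. Since bounding pairs generate $\mathcal{I}(\Sigma)$ and both $\tau_\zeta$ and $\tau$ are homomorphisms, agreement on this generating set finishes the proof.
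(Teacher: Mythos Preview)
Your proposal is correct and follows essentially the same route as the paper: both arguments reduce to bounding-pair generators, invoke Lemma~\ref{lemm_bounding_pair} to read off $\tau_\zeta(t_{c_1c_2})$ modulo $F^4$, and compare with Johnson's classical formula for $\tau$ on a bounding pair (the paper cites \cite{Johnson80} Lemma~4B directly rather than restating it). Your additional remarks on why $\tau_\zeta$ is a homomorphism and on sign bookkeeping are sound but not needed in the paper's terse version, which simply chooses the presentation $c_1=(b_{k+1})_\square$, $c_2=(\prod_{i=1}^k[a_i,b_i]\,b_{k+1})_\square$ so that the sign from Lemma~\ref{lemm_bounding_pair} matches Johnson's on the nose.
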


\begin{proof}
By \cite{Johnson80} Lemma 4B, it is enough to show
$\tau_\zeta (t_{c_1c_2}) =\sum_{i=1}^k 
[a_i] \wedge [b_i] \wedge [b_{k+1}]$ where
$c_1$ and $c_2$ are disjoint simple closed curves
which is presented by $(b_{k+1})_\square$ and
$( \prod_{i=1}^k[a_i,b_i] b_{k+1})_\square$, respectively.
By Lemma \ref{lemm_bounding_pair}, we have
$L(c_1)-L(c_2) =\lambda (\sum_{i=1}^k 
[a_i] \wedge [b_i] \wedge [b_{k+1}]) \mod F^4 \widehat{\skein{\Sigma}}$.
This finishes the proof.
\end{proof}

Since $\mathcal{K} (\Sigma) \defeq  \ker \tau$ is generated by
$\shuugou{t_c|c:\mathrm{sep. s.c.c.}}$, we have the following.

\begin{cor}
The subgroup $F^4 \widehat{\skein{\Sigma}} \cap I\skein{\Sigma}$
 is generated by $\mathcal{L}_{\mathrm{sep}}(\Sigma)$.
Furthermore, the restriction of $\zeta$ to $\mathcal{K} (\Sigma)$
$\zeta_{|\mathcal{K}(\Sigma)}:\mathcal{K}(\Sigma) \to
F^4 \widehat{\skein{\Sigma}} \cap I\skein{\Sigma}$
is an isomorphism defined by 
$\zeta (t_c) =L(c)$ for a separating simple closed curve $c$.
\end{cor}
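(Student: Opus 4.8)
The plan is to deduce the corollary from the isomorphism $\zeta\colon\mathcal{I}(\Sigma)\to I\skein{\Sigma}$ together with the identity $\tau_\zeta=\tau$ from Theorem~\ref{thm_johnson_zeta}. First I would record two elementary facts. By the definition of $\theta$ we have $\theta(L(c))=t_c$ for every separating simple closed curve $c$, and since $L(c)\in\mathcal{L}_{\mathrm{sep}}(\Sigma)\subset I\skein{\Sigma}$ this gives $\zeta(t_c)=\theta^{-1}(t_c)=L(c)$. Moreover, by the Corollary following Lemma~\ref{lemm_bounding_pair} we have $\mathcal{L}_{\mathrm{sep}}(\Sigma)\subset F^4\widehat{\skein{\Sigma}}$, so the asserted equality of subgroups is at least consistent, and $F^4\widehat{\skein{\Sigma}}\cap I\skein{\Sigma}$ is a genuine $\bch$-subgroup of $(F^3\widehat{\skein{\Sigma}},\bch)$ since $\bch$ of elements of $F^4$ again lies in $F^4$.

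Next I would observe that $\tau_\zeta$ is by construction the composite $\mathcal{I}(\Sigma)\xrightarrow{\zeta}I\skein{\Sigma}\hookrightarrow F^3\widehat{\skein{\Sigma}}\twoheadrightarrow F^3\widehat{\skein{\Sigma}}/F^4\widehat{\skein{\Sigma}}$, and that the last arrow is a group homomorphism because $\bch(x,y)\equiv x+y\pmod{F^4\widehat{\skein{\Sigma}}}$ for $x,y\in F^3\widehat{\skein{\Sigma}}$ (all higher $\bch$-terms involve a bracket, hence lie in $[F^3,F^3]\subset F^4$, and the iterated ones lie in even deeper filtration). Therefore, for $g\in\mathcal{I}(\Sigma)$, one has $g\in\ker\tau_\zeta$ if and only if $\zeta(g)\in F^4\widehat{\skein{\Sigma}}$, equivalently $\zeta(g)\in F^4\widehat{\skein{\Sigma}}\cap I\skein{\Sigma}$. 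Since $\tau_\zeta=\tau$ and $\mathcal{K}(\Sigma)=\ker\tau$, and since $\zeta$ is a bijection onto $I\skein{\Sigma}$, this yields $\zeta\bigl(\mathcal{K}(\Sigma)\bigr)=F^4\widehat{\skein{\Sigma}}\cap I\skein{\Sigma}$ and shows that $\zeta$ restricts to a group isomorphism $\zeta_{|\mathcal{K}(\Sigma)}\colon\mathcal{K}(\Sigma)\to F^4\widehat{\skein{\Sigma}}\cap I\skein{\Sigma}$ sending $t_c$ to $L(c)$, which is the second assertion.

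Finally, by Johnson's theorem quoted just above, $\mathcal{K}(\Sigma)$ is generated as a group by the separating Dehn twists $\{t_c\}$; applying the group isomorphism $\zeta_{|\mathcal{K}(\Sigma)}$ and using $\zeta(t_c)=L(c)$, the image $F^4\widehat{\skein{\Sigma}}\cap I\skein{\Sigma}$ is generated with respect to $\bch$ by $\{L(c)\mid c\text{ a separating s.c.c.}\}=\mathcal{L}_{\mathrm{sep}}(\Sigma)$, which is the first assertion. I do not expect a genuine obstacle here; the only points needing care are bookkeeping ones: that ``generated by'' on the skein side refers to the $\bch$-group structure, so that the homomorphism $\zeta$ carries a generating set of $\mathcal{K}(\Sigma)$ to a generating set of its image, and that the target $F^3/F^4$ of $\tau_\zeta$ is regarded as the additive group to which $\bch$ degenerates modulo $F^4$, so that $\tau_\zeta$ is honestly a homomorphism with kernel exactly $\zeta^{-1}\bigl(F^4\widehat{\skein{\Sigma}}\cap I\skein{\Sigma}\bigr)$.
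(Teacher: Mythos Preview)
Your proposal is correct and follows exactly the approach the paper intends: the paper states no proof beyond the sentence ``Since $\mathcal{K}(\Sigma)\defeq\ker\tau$ is generated by $\{t_c\mid c:\text{sep.\ s.c.c.}\}$, we have the following,'' and you have simply spelled out the implicit steps---identifying $\ker\tau_\zeta$ with $\zeta^{-1}(F^4\widehat{\skein{\Sigma}}\cap I\skein{\Sigma})$ via $\bch(x,y)\equiv x+y\bmod F^4$, invoking $\tau_\zeta=\tau$ from Theorem~\ref{thm_johnson_zeta}, and transporting the generating set along the isomorphism $\zeta$.
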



\begin{thebibliography}{99}
\large

\bibitem{Johnson80}
D. Johnson,
\textit{An abelian quotient of the mapping class group $\mathcal{I}_g$},
Math. Ann. 249, 225-242(1980).
\bibitem{Kawazumi}
N. Kawazumi and Y. Kuno,
\textit{The logarithms of Dehn twists},
Quantum Topology, Vol. 5, Issue 3, pp. 347–423, 2014
\bibitem{KK}
N. Kawazumi and Y. Kuno,
\textit{Groupoid-theoretical methods in the mapping class groups of surfaces},
arXiv: 1109.6479 (2011), UTMS preprint: 2011–28
\bibitem{MT}
G. Massuyeau and V. Turaev,
\textit{Fox pairings and generalized Dehn twists},
Ann. Inst. Fourier 63 (2013) 2403-2456.
\bibitem{Morita_Casson_core}
S. Morita,
\textit{On the structure of the Torelli group and the Casson invariant},
Topology, Volume 30, Issue 1991, 603-621.
\bibitem{Mu2012}
G. Muller, \textit{Skein algebra and cluster algebras of marked surfaces},
arXiv: 1104.0020(2012).
\bibitem{Pu2008}
A. Putman, \textit{An infinite presentation of the Torelli group},
Geom. Funct. Anal. 19 (2009), no. 2, 591-643.
\bibitem{TsujiCSAI}
S. Tsuji,
\textit{Dehn twists on Kauffman bracket skein algebras},
preprint, arXiv:1510.05139(2015).
\bibitem{Tsujipurebraid}
S. Tsuji,
\textit{The quotient of a Kauffman bracket skein algebra 
by the square of an augmentation ideal},
in preparation.
\bibitem{Tsujihom3}
S. Tsuji,
in preparation.
\bibitem{Turaev}
Turaev, V. G.,
\textit{Skein quantization of Poisson algebras of loops on surfaces},
 Ann. Sci. Ecole Norm. Sup. (4) 24 (1991), no. 6



\end{thebibliography}
\end{document}